\theoremstyle{theorem}
\renewcommand{\O}{\mathcal O}
\newcommand{\ie}{{\itshape i.e.} }
\renewcommand{\to}[1][]{\xrightarrow{\ #1\ }}
\newcommand{\notdivide}{\not|\hspace{5pt}}
\begin{document}
\numberwithin{equation}{theorem}
\title[Bertini theorems for $F$-singularities]{Bertini theorems for $F$-singularities}
\author{Karl Schwede, Wenliang Zhang}
\begin{abstract}
We prove that strongly $F$-regular and $F$-pure singularities satisfy Bertini-type theorems (including in the context of pairs) by building upon a framework of Cumino, Greco and Manaresi (compare with the work of Jouanolou and Spreafico).  We also prove that $F$-injective singularities fail to satisfy even the most basic Bertini-type results.
\end{abstract}
\subjclass[2010]{14F18, 13A35, 14E15, 14J17, 14B05}
\keywords{$F$-regular, $F$-pure, $F$-rational, $F$-injective, Bertini}
\thanks{The first author was partially supported by the NSF grant DMS \#1064485.}
\thanks{The second author was partially supported by the NSF grant DMS \#1068946.}
\address{Department of Mathematics\\ The Pennsylvania State University\\ University Park, PA, 16802, USA}
\email{schwede@math.psu.edu}
\address{Department of Mathematics\\ University of Michigan\\ Ann Arbor, MI, 48109, USA}
\email{wlzhang@umich.edu}
\maketitle

\section{Introduction}

The study of projective varieties with ``mild'' (such as rational or log canonical) singularities is ubiquitous throughout higher dimensional complex algebraic geometry.  However, the classes of singularities that are so mild in characteristic zero, frequently exhibit much more pathological behavior in positive characteristic.  This is largely a consequence of the failure of Kodaira-type vanishing theorems.  However, there are classes of singularities with origins in commutative algebra (\ie tight closure theory) and representation theory (\ie Frobenius splitting theory) which seem better behaved in characteristic $p > 0$.  These are the so called \emph{$F$-singularities}.

In the study of varieties with mild singularities in characteristic zero, one of the more common tools is Bertini's theorem.  In particular, by cutting by general hyperplanes, many questions can be studied on smaller dimensional (presumably simpler) varieties.  Thus, the fact that ``mild'' singularities remain ``mild'' after cutting by general hyperplanes, is critical.
It is therefore natural to ask whether $F$-singularities are preserved when cut by general hyperplane sections.

In this paper we show that strongly $F$-regular and sharply $F$-pure singularities satisfy Bertini's second theorem (see Corollary \ref{cor.SecondTheoremOfBertini} for a precise statement).  In particular, they are preserved after taking general hyperplane sections.  We state our result in a special case first.
\vskip 6pt
\noindent{\bf Corollary \ref{cor.EasyBertiniStatement}.}  {\it Suppose that $X$ is a projective variety over an algebraically closed field $k$.  If $X$ is $F$-pure (respectively strongly $F$-regular) then so is a general hyperplane section of a very ample line bundle.}
\vskip 6pt
In particular, if $X$ is (globally) $F$-split, then a general hyperplane section is (locally) $F$-split.
We actually obtain a stronger statement which is written below.
\vskip 6pt
\noindent{\bf Theorem \ref{thm.MainTheorem}.}  {\it  Suppose that $X$ is a variety over an algebraically closed field $k$, let $\Delta \geq 0$ be a $\bQ$-divisor on $X$, and let $\phi:X\to \bP^n_k$ be a $k$-morphism with separably generated (not necessarily algebraic) residue field extensions. Suppose either
\begin{itemize}
\item[(i)]  $(X,\Delta)$ is strongly $F$-regular, or
\item[(ii)]  $(X,\Delta)$ is sharply $F$-pure, or
\item[(iii)]  $\Delta = 0$, $X$ is not necessarily normal, and $X$ is $F$-pure.
\end{itemize}
Then there exists a nonempty open subset $U$ of $(\bP^n_k)^{\vee}$, the dual projective space of hyperplanes, such that for each hyperplane $H\in U$,
\begin{itemize}
\item[(i)]  $\left(\phi^{-1}(H),\Delta|_{\phi^{-1}(H)}\right)$ is strongly $F$-regular.
\item[(ii)]  $\left(\phi^{-1}(H),\Delta|_{\phi^{-1}(H)}\right)$ is sharply $F$-pure.
\item[(iii)]  $\phi^{-1}(H)$ is $F$-pure.
\end{itemize}
}
\vskip 6pt
\noindent Parts (i) and (ii) of this result also extends to triples $(R, \Delta, \ba^t)$ by Remark \ref{rem.ExtensionToTriples}.

Strongly $F$-regular and sharply $F$-pure singularities are the moral equivalent of log terminal and log canonical singularities respectively, and so  Theorem \ref{thm.MainTheorem} should be viewed as an analog of \cite[Lemma 5.17]{KollarMori}.  The proof is an application of the axiomatic framework for proving Bertini-type theorems laid out in \cite{CuminoGrecoManaresiAxiomaticBertini}, \cf \cite{JouanolouTheoremDeBertini,SpreaficoAxiomaticTheoryForBertini}.  We also rely heavily on standard ideas for base change for $F$-singularities; \cf \cite{HochsterHunekeSmoothBaseChange}.

We mention that related questions have been studied before for strongly $F$-regular pairs in \cite[Example 4.7]{MustataYoshidaTestIdealVsMultiplierIdeals}.  In that work, \Mustata{} and Yoshida gave an example where general sections of a linear system behaved badly, and in particular, did not satisfy any version of Theorem \ref{thm.MainTheorem}.  However, their example had inseparable residue field extensions even though the extension of fraction fields was separably generated.  In particular, their example demonstrates that we cannot weaken the hypothesis of Theorem \ref{thm.MainTheorem} to only requiring separably generated residue field extensions only for the function field extension of the varieties.  See Remark \ref{rem.MusYos} for details.

Via $F$-inversion of adjunction applied to Theorem \ref{thm.MainTheorem}, we obtain the following corollary which should be compared with \cite[Lemma 5.17]{KollarMori}:
\vskip 6pt
\noindent{\bf Corollary \ref{cor.PairsCorollary}.}  {\it Suppose that $X$ is a variety over an algebraically closed field $k$, let $\Delta \geq 0$ be a $\bQ$-divisor on $X$.  Let $\phi:X\to \bP^n_k$ be a $k$-morphism with separably generated (not necessarily algebraic) residue field extensions.  Fix a general element $H$ of $(\bP^n_k)^{\vee}$.  Then:
\begin{itemize}
\item[(i)]  If $(X, \Delta)$ is sharply $F$-pure then $(X, \Delta + \phi^{-1}(H))$ is also sharply $F$-pure.
\item[(ii)]  If $(X, \Delta)$ is strongly $F$-regular, then $(X, \Delta+\phi^{-1}(H))$ is divisorially $F$-regular\footnote{The term ``\emph{divisorially} $F$-regular'' singularities unfortunately corresponds to ``\emph{purely} log terminal'' singularities \cite{TakagiPLTAdjoint}.} in the sense of \cite{HaraWatanabeFRegFPure}.
\item[(iii)]  If $(X, \Delta)$ is strongly $F$-regular, then $(X, \Delta+\varepsilon \phi^{-1}(H))$ is strongly $F$-regular for all $1 > \varepsilon \geq 0$.
\end{itemize}
}
\vskip 6pt
It is also natural to ask whether $F$-rational and $F$-injective singularities satisfy Bertini's second theorem.  We however show that $F$-injective singularities cannot satisfy Bertini's second theorem, see Section \ref{sec.WeakNormalityAndFailureOfBertini}.  We do not know how $F$-rationality behaves.
\vskip 6pt
\noindent{\bf Theorem \ref{thm.ExistFailureOfBertiniForSurfaces}.}  {\it
There exists a projective surface which is $F$-injective, except possibly at finitely many points, whose general hyperplane section is not $F$-injective.}
\vskip 6pt
\noindent
This example is based upon the study of weak normality and general hyperplane sections as developed in \cite{CuminoGrecoManaresiHyperplaneSectionsOfWNVarieties}.

\begin{remark}
We should point out that this paper is about global Bertini theorems.  A local Bertini theorem relates singularities at a point $x$ of a variety $X$ with singularities of general hypersurfaces through $x$.  Note that $F$-regular singularities do not satisfy local Bertini theorems, since for example, if $x \in X$ is an $F$-regular but non-regular point of a surface $X$ and $H$ is any hypersurface through $x \in X$, then $H$ is not even normal.  Indeed, if $H$ was normal, it would be regular and thus $x \in X$ would be regular as well.
\end{remark}
\vskip 9pt
\noindent{{\it{Acknowledgements:}}
\vskip 3pt
The authors would like to thank Florian Enescu for discussions on the history of base change problems for $F$-singularities and for comments on an earlier draft of this paper.  We are indebted to all of the referees, Alberto Fernandez Boix and Kazuma Shimomoto for numerous helpful comments on a previous draft of this paper. We would like to thank Mel Hochster for some very inspiring discussions, especially on properties described (A1) and (A1P) below.  We would like to thank S\'andor Kov\'acs for pointing out to us \cite{KollarHullsHusks}.  We would like to thank Shunsuke Takagi for some discussions on the problem in general.  Finally we would like to thank Kevin Tucker for useful discussions on base change problems and for comments on a previous draft of this paper.  The authors also thank Anton Geraschenko, Thomas Nevins, and {\tt{ulrich}} for providing some references to \cite{SpreaficoAxiomaticTheoryForBertini} and \cite{JouanolouTheoremDeBertini} on
\begin{center}
{\tt{http://mathoverflow.net/questions/67326/}}.
\end{center}

\section{Background and notation}

Throughout this paper, all schemes are separated and all rings and schemes are of characteristic $p > 0$ and $F$-finite.  It should be noted that $F$-finite rings are always excellent \cite{KunzOnNoetherianRingsOfCharP} and possess dualizing complexes \cite{Gabber.tStruc}.   Additionally, any scheme essentially of finite type over a perfect field is $F$-finite, and so since we are interested in schemes of finite type over algebraically closed fields, it is harmless to work in this setting.  We will pay special care that we do not depart from the $F$-finite setting in various base-change statements.  We also note that when we discuss a variety, it is always of finite type over an algebraically closed field.

Given a property $\sP$ of local rings, we say that a scheme $X$ satisfies $\sP$ if all of its stalks satisfy the property.  Such a property is called a \emph{local property of schemes}.

\begin{definition}
\label{def.GeometricallyP}
Suppose that $k$ is a field and that $X \to \Spec k$ is a map of schemes.  We say that $X$ is \emph{geometrically $\sP$} if for every finite extension $k' \supseteq k$, we have $X_{k'} := X \times_k k'$ is $\sP$.
\end{definition}

The reason we restrict to finite extensions is that we do not want to leave the category of $F$-finite schemes.   We could just as easily define geometrically $\sP$ by requiring that the $X_{k'}$ satisfy $\sP$ for finitely generated extensions $k' \supseteq k$ (this would be more general than finite extensions but less general than arbitrary extensions).  For example, note that $\bF_p(x_1, x_2, \dots, x_n)$ is $F$-finite, but $\bF_p(x_1, x_2, \dots)$ is not $F$-finite.

When dealing with a ring $R$ of characteristic $p > 0$, we use $F^e : R \to R$ to denote the $e$-iterated Frobenius map on $R$.  When $R$ is reduced (a setting in which we will always reside), we use $F^e_* R$ and $R^{1/p^e}$ interchangeably to denote the ring $R$ viewed as a module over itself via $F^e$ (\ie $r\cdot x = r^{p^e} x$).  The advantage of the latter notation is that it helps us distinguish elements from $R$ and $F^e_* R$.

Finally, we also recall the following generic freeness result which we will use several times.

\begin{theorem}[(Generic Freeness, Theorem 14.4 in \cite{EisenbudCommutativeAlgebra})]
\label{thm.GenFreeness}
Suppose that $S$ is a Noetherian domain and $T$ is a finitely generated $S$-algebra. If $M$ is a finitely generated $T$-module, then there exists a nonzero element $c\in S$ such that $M[c^{-1}]$ is a free $S[c^{-1}]$-module.
\end{theorem}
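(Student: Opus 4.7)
The plan is to prove the result by induction on the number $n$ of generators of $T$ as an $S$-algebra, with a prime-filtration reduction to the case $M = T$ as a preliminary step. All nonzero elements of $S$ requiring inversion are collected into a single $c$ at the end.

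As a preliminary reduction, since $T$ is Noetherian (being finitely generated over the Noetherian domain $S$) and $M$ is finitely generated, there exists a filtration $0 = M_0 \subset M_1 \subset \cdots \subset M_r = M$ with $M_i/M_{i-1} \cong T/\mathfrak{p}_i$ for primes $\mathfrak{p}_i \subset T$. Free modules are projective, so once each quotient is made free over a common $S[c^{-1}]$ the short exact sequences all split, and $M[c^{-1}]$ becomes a direct sum of frees, hence free. Thus it suffices to treat $M = T/\mathfrak{p}$; replacing $T$ by $T/\mathfrak{p}$, assume $T$ is a domain and $M = T$. If $S \to T$ has nonzero kernel, any $c$ in the kernel gives $T[c^{-1}] = 0$, trivially free; otherwise $S \hookrightarrow T$ with both rings domains.

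Now induct on $n$. The base case $n = 0$ is $T = S$, which is free over itself. For the inductive step, write $T = T'[x]$ with $T'$ generated by $n-1$ elements, and form the ascending filtration $N_k = T' + xT' + \cdots + x^k T'$ of $T$ by finitely generated $T'$-submodules with $\bigcup_k N_k = T$. The quotients $Q_k = N_k/N_{k-1}$ are finitely generated $T'$-modules, and multiplication by $x$ induces surjections $\alpha_k : Q_k \twoheadrightarrow Q_{k+1}$.

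The key step, and the main obstacle, is to show that after inverting a suitable $c_0 \in S$, the $\alpha_k$ become isomorphisms for $k \geq k_0$, stabilizing to a single $T'$-module $Q$. I would approach this via the generic fiber: the algebra $T \otimes_S \Frac(S) = (T' \otimes_S \Frac(S))[x]$ is either a polynomial ring in $x$ (transcendental case, giving $Q_k \otimes_S \Frac(S) \cong T' \otimes_S \Frac(S)$ for all $k$) or a finite extension of $T' \otimes_S \Frac(S)$ (algebraic case, giving $Q_k \otimes_S \Frac(S) = 0$ for $k$ large). In either case the stabilization descends to $S[c_0^{-1}]$ by a Noetherian argument together with clearing of denominators for the finitely many coefficients involved. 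Then apply the inductive hypothesis separately to the two $T'$-modules $N_{k_0}$ and $Q$ to obtain a common $c_1$ making both free over $S[c_0^{-1}][c_1^{-1}]$. Setting $c = c_0 c_1$, the sequences $0 \to N_{k-1}[c^{-1}] \to N_k[c^{-1}] \to Q[c^{-1}] \to 0$ all split for $k > k_0$, yielding $T[c^{-1}] = N_{k_0}[c^{-1}] \oplus \bigoplus_{k > k_0} Q[c^{-1}]$, a direct sum of free $S[c^{-1}]$-modules, hence free.
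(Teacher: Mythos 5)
The paper does not prove this statement; it simply cites Eisenbud's Theorem 14.4. Your overall architecture — reduce to $M = T/\mathfrak p$ by a prime filtration, induct on the number of algebra generators, filter $T = T'[x]$ by the $T'$-modules $N_k$, stabilize the cyclic quotients $Q_k = N_k/N_{k-1}$, then apply the inductive hypothesis to $N_{k_0}$ and $Q$ and split the tower — is exactly the standard Grothendieck d\'evissage proof, and that skeleton is sound.

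However, your argument for the key stabilization step has a genuine gap. The claimed dichotomy ``$T\otimes_S \Frac(S)$ is a polynomial ring over $T'\otimes_S\Frac(S)$ or a \emph{finite} extension of it'' is false: algebraic over $\Frac(T')$ does not mean integral over $T'\otimes_S\Frac(S)$. Take $S=k$ a field, $T'=k[y]$, $x=y^{-1}$, so $T=k[y,y^{-1}]$. Here $x$ is algebraic (it satisfies $yx-1=0$) but $T$ is not a finite $T'$-module, and $Q_k = y^{-k}k[y]/y^{-(k-1)}k[y] \cong k[y]/(y) = k \neq 0$ for every $k\geq 1$. So the assertion that ``$Q_k\otimes_S\Frac(S)=0$ for $k$ large in the algebraic case'' fails, and with it the claimed mechanism for stabilization. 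Worse, the framing suggests an inversion $c_0$ is \emph{needed} to achieve stabilization of the $\alpha_k$, which is not so. The fix is cleaner than what you attempted and requires no base change at all: each $Q_k$ is a cyclic $T'$-module, $Q_k\cong T'/I_k$ with $I_k=\{a\in T' : a x^k\in N_{k-1}\}$, and multiplication by $x$ shows $I_k\subseteq I_{k+1}$. Since $T'$ is Noetherian the chain stabilizes, so $Q_k\cong T'/I$ for $k\geq k_0$, and $\alpha_k$ is then an isomorphism for $k\geq k_0$ because it is a surjection of isomorphic Noetherian modules (or directly because $\ker\alpha_k\cong I_{k+1}/I_k=0$). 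With that replacement the rest of your argument — applying induction to the finitely generated $T'$-modules $N_{k_0}$ and $Q$, then splitting the filtration — goes through as you wrote it.
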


\subsection{Definitions of $F$-singularities}

We recall various definitions of $F$-singularities.

\begin{definition}[(strong $F$-regularity)]
Suppose that $(R,\bm)$ is an $F$-finite reduced local ring of characteristic $p > 0$. We say that $R$ is \emph{strongly $F$-regular} if, for each $c\in R$ not in any minimal prime of $R$, there exists $q=p^e$ such that the $R$-linear map $R\xrightarrow{1\mapsto c^{1/q}}R^{1/q}$ splits, or equivalently,  for each $c\in R$ not in any minimal prime of $R$, there exists $q=p^e$ such that the $R$-linear map $(R\xrightarrow{1\mapsto c^{1/q}}R^{1/q})\otimes_RE$ is injective, where $E$ is the injective hull of the residue field of $R$.
\end{definition}

\begin{definition}[($F$-purity)]
Suppose that $(R,\bm)$ is an $F$-finite reduced local ring of characteristic $p > 0$. We say that $R$ is \emph{$F$-pure} if $R\hookrightarrow R^{1/p}$ splits, or equivalently, $(R\hookrightarrow R^{1/p})\otimes_RE$ is injective, where $E$ is the injective hull of the residue field of $R$.
\end{definition}

\begin{definition}[($F$-injectivity)]
Suppose that $(R,\bm)$ is an $F$-finite local ring of characteristic $p > 0$.  We say that $R$ is \emph{$F$-injective} if the Frobenius map $F : H^i_{\bm}(R) \to H^i_{\bm}(F^e_* R)$ is injective for every $i \geq 0$.  Dually, $R$ is $F$-injective if for each $i \in \bZ$, the $i$th cohomology of the Grothendieck-trace map
\[
\myH^i(F^e_* \omega_R^{\mydot}) \to \myH^i \omega_R^{\mydot}
\]
is surjective for all $e > 0$ (or equivalently some $e > 0$).  Here $\omega_R^{\mydot}$ is a dualizing complex for $R$.
\end{definition}

\begin{remark}
It is worth remarking that $F$-injective rings are automatically reduced.  Indeed, suppose that $0 \neq f \in R$ is a nilpotent element.  Set $J = \Ann_R(f)$ and suppose that $Q$ is a minimal prime containing $J$.  First consider $f/1 \in R_Q$.  If $f/1 = 0$, then $f$ is killed by an element of $R \setminus Q \subseteq R \setminus \Ann_R(f)$, which is impossible.  Thus we know that $0 \neq f/1 \in R_Q$.  Furthermore, we notice that $\Ann_{R_Q}(f/1)$ is $QR_Q$-primary.  In particular, $Q^n R_Q \subseteq \Ann_{R_Q}(f/1)$.  But then $0 \neq f/1 \in H^0_{QR_Q}(R_Q) \subseteq R_Q$.  Now, some iterate of Frobenius kills $f/1$ since $f$ was nilpotent.  Therefore the Frobenius map $F : H^0_{QR_Q}(R_Q) \to H^0_{QR_Q}(F_* R_Q)$ is not injective.  By local duality, the map $\myH^0(F_* \omega_{R_Q}^{\mydot}) \to \myH^0(\omega_{R_Q}^{\mydot})$ is not surjective.  It follows immediately that $R_Q$ is not $F$-injective and thus $R$ is also not $F$-injective.
\end{remark}

\begin{definition}[($F$-rationality)]
We say that $(R, \bm)$ is \emph{$F$-rational} if it is Cohen-Macaulay and there is \emph{no} non-zero submodule $J \subsetneq \omega_R$ such that the Grothendieck-trace map $\Phi : F^e_* \omega_R \to \omega_R$ satisfies $\Phi(F^e_* J) \subseteq J$.  In other words, if $\omega_R$ is simple under the action of $\Phi$ and $R$.
\end{definition}

All of these notions extend to not-necessarily-local rings (and schemes) by requiring the condition at every point.  In the $F$-finite case, they are all known to be open conditions.  We now mention a generalization of $F$-purity and strong $F$-regularity to pairs.  We will not review basic facts about the formalism of $\bQ$-divisors in this paper.  Instead we suggest the reader see \cite{KollarMori}, \cite{HaraWatanabeFRegFPure} or \cite{SchwedeTuckerTestIdealSurvey} for a discussion of $\bQ$-divisors in this context.

\begin{definition}
A \emph{pair}, denoted $(R, \Delta)$ (or $(X, \Delta)$) is the combined information of a normal ring $R$ (respectively, a normal scheme $X$) and an effective $\bQ$-divisor $\Delta \geq 0$.
\end{definition}

\begin{definition}[(strong $F$-regularity for pairs)]
A pair $(R, \Delta)$, with $R$ local, is said to be \emph{strongly $F$-regular} if for each nonzero $c \in R$ , the composition
\[
\begin{array}{rcccl}
R & \to & F^e_* R & \to & F^e_* (R(\lceil (p^e - 1)\Delta \rceil))\\
1 & \mapsto & c & \mapsto & c
\end{array}
\]
splits for some\footnote{equivalently, all $e \gg 0$ or infinitely many $e > 0$} $e > 0$ .
\end{definition}

\begin{remark}
For strong $F$-regularity, it is sufficient to obtain a splitting for a single $c$ such that $\Supp(\Delta) \subseteq V(c)$ and $\Spec R[c^{-1}]$ is regular.  See Lemma \ref{lem.FPurePlusEpsilonImpliesFReg} below for a slight variation on this statement.
\end{remark}

\begin{definition}[(sharp $F$-purity for pairs)]
A pair $(R, \Delta)$ is said to be \emph{sharply $F$-pure} if the composition
\[
\begin{array}{rcccl}
R & \to & F^e_* R & \to & F^e_* (R(\lceil (p^e - 1)\Delta \rceil))\\
1 & \mapsto & 1 & \mapsto & 1
\end{array}
\]
splits for some\footnote{equivalently, infinitely many $e > 0$ or all sufficiently divisible $e > 0$} $e > 0$.
\end{definition}

\begin{remark}
\label{rem.SharpFPurePairsAreNormal}
Notice that a sharply $F$-pure (respectively strongly $F$-regular) pair $(R, \Delta \neq 0)$, $R$ is by definition normal.  In particular, geometrically sharply $F$-pure pairs are geometrically normal as well.
\end{remark}

We also recall a common characterization of (sharply) $F$-pure pairs essentially taken from \cite{HaraWatanabeFRegFPure}.

\begin{lemma} [(Proposition 2.4(1) in \cite{HaraWatanabeFRegFPure})]
\label{lem.MatlisDualSharplyFPure}
Suppose that $(R, \bm)$ is an $F$-finite normal $d$-dimensional local ring and $\Delta$ is a $\bQ$-divisor on $X = \Spec R$.  Then $(R, \Delta)$ is sharply $F$-pure, if and only if there exists an $e > 0$ such that the composition
\[
\begin{array}{rcl}
E_R & \cong & H^d_{\bm}( \omega_R ) \\
& \cong & H^d_{\bm}( \O_X(K_X)) \\
& \to & H^d_{\bm}(F^e_* \O_X(p^e K_X)) \\
& \to & H^d_{\bm}(F^e_* \O_X(p^e K_X + \lceil (p^e - 1)\Delta \rceil)) \\
& \cong & H^d_{\bm}(F^e_* \O_X(K_X + \lceil (p^e - 1)(K_X + \Delta) \rceil))
\end{array}
\]
is injective.
\end{lemma}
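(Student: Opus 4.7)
The plan is to deduce this criterion from standard Matlis duality combined with the projection formula for the finite Frobenius morphism $F^e$. I would first reduce to the case where $R$ is $\bm$-adically complete; this is harmless since sharp $F$-purity, $E_R$, and all local cohomology modules in question are preserved under faithfully flat completion of an $F$-finite local ring.

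The heart of the argument is the standard splitting criterion: for a finitely generated module $N$ over a complete Noetherian local ring $R$, an $R$-linear map $\psi : R \to N$ splits if and only if $\psi \otimes_R E_R$ is injective. One direction is immediate; for the converse, $\psi$ splits iff the evaluation $\Hom_R(\psi, R) : \Hom_R(N, R) \to R$ is surjective, and applying $\Hom_R(-, E_R)$ together with the canonical natural isomorphism $\Hom_R(\Hom_R(N, R), E_R) \cong N \otimes_R E_R$ (verified by reducing to the case $N = R$ via a finite presentation) translates the surjectivity of $\Hom_R(\psi, R)$ into the injectivity of $\psi \otimes_R E_R$. Apply this criterion to $\psi = \phi^\Delta_e : R \to F^e_* R(\lceil (p^e - 1)\Delta \rceil)$, whose splitting for some $e > 0$ is precisely the definition of sharp $F$-purity of $(R, \Delta)$.

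It remains to identify $\phi^\Delta_e \otimes_R E_R$ with the composition displayed in the statement. Using the isomorphism $E_R \cong H^d_{\bm}(\omega_R) = H^d_{\bm}(\O_X(K_X))$ (obtained from local duality via $\Hom_R(\omega_R, \omega_R) \cong R$ for the normal ring $R$), the projection formula for the finite morphism $F^e$ together with $F^{e*} \O_X(K_X) \cong \O_X(p^e K_X)$ as reflexive rank-one sheaves on a normal scheme yields
\[
F^e_* R(\lceil (p^e-1)\Delta \rceil) \otimes_R \omega_R \;\cong\; F^e_* \O_X\bigl(p^e K_X + \lceil (p^e - 1)\Delta \rceil\bigr) = F^e_* \O_X\bigl(K_X + \lceil (p^e - 1)(K_X + \Delta) \rceil\bigr).
\]
Since $F^e$ is finite, $H^d_{\bm}$ commutes with $F^e_*$, and applying $H^d_{\bm}$ to the tensored map recovers exactly the claimed composition. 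The main obstacle is the bookkeeping in this last step: checking that the natural transformation $(-) \otimes_R E_R \to H^d_{\bm}((-) \otimes_R \omega_R)$ intertwines $\phi^\Delta_e \otimes_R E_R$ with $H^d_{\bm}(\phi^\Delta_e \otimes_R \omega_R)$, and tracking the twists by $K_X$ through the projection formula for the non-flat Frobenius pushforward; the divisor identity $p^e K_X + \lceil (p^e-1)\Delta \rceil = K_X + \lceil (p^e-1)(K_X + \Delta) \rceil$ is immediate since $K_X$ is integral.
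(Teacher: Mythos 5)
Your proof is correct and takes a somewhat different route from the paper's. The paper treats the two implications asymmetrically: for ``sharply $F$-pure $\Rightarrow$ injective'' it twists the split map $R \to F^e_* R(\lceil (p^e-1)\Delta\rceil)$ by $\O_X(K_X)$ and applies $H^d_{\bm}$; for the converse it explicitly Matlis-dualizes the displayed composition, using Grothendieck duality for the finite morphism $F^e$ to identify $\Hom_R\bigl(H^d_{\bm}(F^e_*\O_X(D)),E_R\bigr)$ with $F^e_*\O_X((1-p^e)K_X - D)$, and then reads off a Frobenius splitting from the resulting surjection onto $\O_X$. You instead fold both directions into the single criterion ``$\psi\colon R\to N$ splits iff $\psi\otimes_R E_R$ is injective,'' proved via the abstract natural isomorphism $\Hom_R(\Hom_R(N,R),E_R)\cong N\otimes_R E_R$, and then identify $\psi\otimes E_R$ with the displayed composition through the projection formula and the facts that $H^d_{\bm}$ commutes with $F^e_*$ and is right exact. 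This buys a unified treatment and avoids invoking Grothendieck duality for $F^e$, at the cost of a bookkeeping subtlety you correctly flag: the projection-formula isomorphism $F^e_*R(\lceil(p^e-1)\Delta\rceil)\otimes_R \omega_R \cong F^e_*\O_X(p^e K_X + \lceil(p^e-1)\Delta\rceil)$ only holds after reflexification when $K_X$ is not Cartier. It would strengthen the write-up to note explicitly why this is harmless here: the natural map $F^e_*R(\lceil(p^e-1)\Delta\rceil)\otimes_R \omega_R \to \bigl(F^e_*R(\lceil(p^e-1)\Delta\rceil)\otimes_R \omega_R\bigr)^{\vee\vee}$ has kernel and cokernel supported in codimension $\geq 2$, and $H^d_{\bm}$ as well as $H^{d-1}_{\bm}$ vanish on modules of dimension $\leq d-2$, so the induced map on $H^d_{\bm}$ is an isomorphism. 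Also note that the reduction to the complete case is unnecessary: $E_R$ is already an injective cogenerator over any Noetherian local ring, so the splitting criterion holds without completing.
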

\begin{proof}
This is essentially contained in \cite[Proposition 2.4(1)]{HaraWatanabeFRegFPure} but we give a brief proof here for the convenience of the reader.
By Matlis/local-duality \cite{HartshorneResidues}, the injectivity above is equivalent to the surjectivity of a composition:
\[
F^e_* \O_X( -\lceil (p^e - 1)(K_X + \Delta) \rceil) \to F^e_* \O_X( (1 - p^e)K_X) \to \O_X
\]
If this composition is surjective, we may choose an element $z \in F^e_* \O_X( -\lceil (p^e - 1)(K_X + \Delta) \rceil)$ sent to $1 \in \O_X$ and notice that the submodule generated by $z$ is isomorphic to $F^e_* \O_X$.  This is our splitting, the factorization above guarantees that our splitting is of the desired form.

For the converse direction, notice that
\[
\O_X(K_X) \to F^e_* \O_X(p^e K_X)  \to F^e_* (\O_X(p^e K_X + \lceil (p^e - 1)\Delta \rceil))
\]
splits and then apply $H^d_{\bm}(\bullet)$.
\end{proof}

\begin{remark}
We now enumerate some basic properties of pairs:
\label{rem.BasicPropertiesOfFPurePairs}
\begin{enumerate}
\item One can see that the pair $(R,\Delta)$ is sharply $F$-pure if and only if the evaluation (at 1) map
\[\Hom_R(F^e_* (R(\lceil (p^e - 1)\Delta \rceil),R) \to R\]
is surjective for some $e > 0$. And since surjectivity is a local property, one can check sharp $F$-purity locally at maximal ideals of $R$.
\item It follows from (i) that if $(R_{\bp}, \Delta|_{\Spec(R_{\bp})})$ is strongly $F$-regular (or sharply $F$-pure) for some $\bp\in \Spec(R)$, then there is an open neighborhood $U$ of $\bp$ in $\Spec(R)$ such that $(R_{\bp'}, \Delta|_{\Spec(R_{\bp'})}))$ is strongly $F$-regular (or sharply $F$-pure) for each $\bp'\in U$.
\item If $(p^e - 1)\Delta$ is an integral Weil divisor and $(R, \Delta)$ is sharply $F$-pure, then it is easy to see that the map $R \to F^e_* (R((p^e - 1)\Delta))$ splits.  This holds because this map can be used to factor a map that does indeed split (just as if $R \to R^{1/p^e}$ splits then so does $R \to R^{1/p}$).
\end{enumerate}
\end{remark}

We will use the following characterization of strong $F$-regularity via sharp $F$-purity.

\begin{lemma} [(Lemma 5.9(a) in \cite{HochsterHunekeSmoothBaseChange}, Corollary 3.10 in \cite{SchwedeSmithLogFanoVsGloballyFRegular})]
\label{lem.FPurePlusEpsilonImpliesFReg}
Suppose that $R$ and $\Delta$ are as above and $\Gamma > 0$ is any other $\bQ$-divisor whose support contains the locus where $(R, \Delta)$ is not strongly $F$-regular.  If $(R, \Delta + \Gamma)$ is sharply $F$-pure, then $(R, \Delta)$ is strongly $F$-regular.
\end{lemma}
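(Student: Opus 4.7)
The strategy is to apply the ``slight variation'' of the test element characterization alluded to in the remark above the lemma: to verify strong $F$-regularity of $(R, \Delta)$, it suffices to produce a single $R$-linear map $\psi : F^e_* R(\lceil (p^e-1)\Delta \rceil) \to R$ satisfying $\psi(h) = 1$ for some element $h$ such that the localization $(R[h^{-1}], \Delta|_{R[h^{-1}]})$ is strongly $F$-regular.  Since strong $F$-regularity is a local property, the plan is to reduce to the local setting and then construct $h$ as a local equation of $(p^e-1)\Gamma$.

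First, by Remark \ref{rem.BasicPropertiesOfFPurePairs}(iii) applied to the iterate of the sharp $F$-purity splitting, for any sufficiently divisible $e$ one obtains an $R$-linear map $\phi : F^e_* R(\lceil (p^e-1)(\Delta+\Gamma) \rceil) \to R$ with $\phi(1) = 1$; moreover $e$ can be chosen so that both $(p^e-1)\Delta$ and $(p^e-1)\Gamma$ are integral Weil divisors. Working locally, under the assumption (valid when $\Gamma$ is $\bQ$-Cartier) that $(p^e-1)\Gamma$ is principal, say $(p^e-1)\Gamma = \Div(h)$ for some $h \in R$, multiplication by $1/h$ provides the identification $R(\lceil (p^e-1)\Delta \rceil + \Div(h)) = h^{-1} \cdot R(\lceil (p^e-1)\Delta \rceil)$ as submodules of the fraction field. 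Then one defines
\[
\psi : F^e_* R(\lceil (p^e-1)\Delta \rceil) \to R, \qquad \psi(x) := \phi(x/h),
\]
which is $R$-linear for the Frobenius module structure and satisfies $\psi(h) = \phi(h/h) = \phi(1) = 1$.

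To conclude, observe that $V(h) = \Supp((p^e-1)\Gamma) = \Supp(\Gamma)$, which by hypothesis contains the entire non-strongly-$F$-regular locus of $(R, \Delta)$. Hence $(R[h^{-1}], \Delta|_{R[h^{-1}]})$ has no non-$F$-regular points and is itself strongly $F$-regular, so $h$ serves as a valid test element in the variant characterization, and the existence of $\psi$ with $\psi(h) = 1$ will yield strong $F$-regularity of $(R, \Delta)$.

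The main technical obstacle will be the case when $\Gamma$ is not $\bQ$-Cartier: then no single $h$ cuts out $(p^e-1)\Gamma$, and the twist identification above fails.  This can be circumvented either by shrinking $\Gamma$ to a $\bQ$-Cartier effective divisor $\Gamma' \leq \Gamma$ of the same support (which preserves the sharp $F$-purity hypothesis by monotonicity and keeps the support condition on the non-$F$-regular locus), or by a more careful argument using multiplication by a high power of an element vanishing on $\Supp(\Gamma)$ to produce an inclusion into $R(\lceil (p^e-1)\Delta \rceil)$ and lifting the sharp $F$-purity splitting through it.
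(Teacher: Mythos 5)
The paper offers no proof of this lemma --- the bracketed citations to Hochster--Huneke and Schwede--Smith are the entirety of its treatment --- so there is nothing in-paper to compare against. On its own merits, your $\bQ$-Cartier case is essentially sound (modulo the secondary point that $(p^e-1)\Gamma$ can only be integral if the denominators of $\Gamma$ are coprime to $p$, which is not assumed, so you would first need a replacement of $\Gamma$ in the spirit of Lemma~\ref{lem.SharpFPureCanBeAssumedNice}), and the appeal to the test-element characterization with $\psi(h)=1$ and $(R[h^{-1}],\Delta)$ strongly $F$-regular is the right tool.

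The gap is in the general case. Your first fix --- shrinking to an effective $\bQ$-Cartier $\Gamma'\le\Gamma$ with $\Supp\Gamma'=\Supp\Gamma$ --- assumes such a $\Gamma'$ exists, but it need not: strongly $F$-regular normal local domains can have non-torsion local class group (for instance the vertex of the cone over $\bP^1\times\bP^1$ under the $\O(1,2)$ embedding has class group $\bZ$ and is strongly $F$-regular), and if each component of $\Gamma$ represents a non-torsion class then \emph{no} effective $\bQ$-Cartier divisor is supported inside $\Supp\Gamma$. The same obstruction defeats your second fix: any $c$ vanishing along $\Supp\Gamma$ then necessarily has $\Div(c)$ with extra prime components outside $\Supp\Gamma$, so multiplication by $c^{-N}$ carries $R(\lceil(p^e-1)\Delta\rceil)$ into a module \emph{strictly larger} than $R(\lceil(p^e-1)(\Delta+\Gamma)\rceil)$ rather than into the domain of $\phi$, and you cannot form $\psi(x)=\phi(x/c^N)$; multiplication by $c^N$ goes the other direction and does not yield a splitting of $1\mapsto c$. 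Closing this gap genuinely requires the iterative Hochster--Huneke argument behind Lemma 5.9(a): interleave the $F$-pure splitting of $(R,\Delta+\Gamma)$ with the local splittings supplied by strong $F$-regularity of $(R[1/c],\Delta)$ across several Frobenius iterates so that the unwanted powers of $c$ are absorbed, rather than manufacturing $\psi$ from $\phi$ in a single step.
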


We also remind the reader that in many cases, the divisor $\Delta$ in sharply $F$-pure pairs can be assumed to be in a particularly nice form.

\begin{lemma} [(Proposition 3.12 in \cite{SchwedeSmithLogFanoVsGloballyFRegular})]
\label{lem.SharpFPureCanBeAssumedNice}
Suppose that $(R, \Delta)$ is sharply $F$-pure.  Then there exists a divisor $\Delta ' \geq \Delta$ such that $(R, \Delta')$ is sharply $F$-pure and such that $K_R + \Delta'$ is $\bQ$-Cartier with index not divisible by $p > 0$.  Additionally, one can arrange $e > 0$ such that $(p^e - 1) \Delta'$ is integral and that $\Hom_R(F^e_* R((p^e - 1)\Delta'), R) \cong F^e_* R( (1-p^e)(K_R + \Delta'))$ is isomorphic to $F^e_* R$ as an $F^e_* R$-module.
\end{lemma}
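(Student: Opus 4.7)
The plan is to translate the splitting witnessing sharp $F$-purity of $(R,\Delta)$ into a divisor-theoretic datum, then enlarge $\Delta$ by the ``defect'' divisor coming from that splitting, so that the resulting $K_R + \Delta'$ becomes $\bQ$-Cartier with principal multiple.

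First I would fix an $e_0 > 0$ for which the composition $R \to F^{e_0}_* R(\lceil (p^{e_0}-1)\Delta\rceil)$ splits via some $\varphi$, and use the reflexive duality
\[
\Hom_R(F^{e_0}_* R(\lceil (p^{e_0}-1)\Delta\rceil), R) \;\cong\; F^{e_0}_* R\bigl((1-p^{e_0})K_R - \lceil (p^{e_0}-1)\Delta\rceil\bigr)
\]
(valid on the normal scheme $\Spec R$) to identify $\varphi$ with a nonzero rational function $f$ satisfying $D := \divisor(f) + (p^{e_0}-1)K_R + \lceil (p^{e_0}-1)\Delta\rceil \geq 0$.

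Next I would set
\[
\Delta' \;:=\; \frac{1}{p^{e_0}-1}\bigl(D + \lceil (p^{e_0}-1)\Delta\rceil\bigr).
\]
Three quick checks are then in order: (a) $\Delta' \geq \Delta$, because $\lceil (p^{e_0}-1)\Delta\rceil \geq (p^{e_0}-1)\Delta$ and $D \geq 0$; (b) $(p^{e_0}-1)\Delta'$ is an integral Weil divisor by construction; (c) from the definition of $D$, one computes $(p^{e_0}-1)(K_R + \Delta') = \divisor(f)$, so $K_R + \Delta'$ is $\bQ$-Cartier with index dividing $p^{e_0}-1$, which is coprime to $p$. To verify sharp $F$-purity of $(R,\Delta')$, the key observation is that the same $f$ now lives in the smaller reflexive sheaf $F^{e_0}_* R\bigl((1-p^{e_0})K_R - (p^{e_0}-1)\Delta'\bigr)$, since the defining inequality for $D$ is precisely this containment; hence $f$ defines an $R$-linear map $\tilde\varphi: F^{e_0}_* R((p^{e_0}-1)\Delta') \to R$ restricting to $\varphi$ on the subsheaf $F^{e_0}_* R(\lceil (p^{e_0}-1)\Delta\rceil)$, so $\tilde\varphi(1) = \varphi(1) = 1$ gives the required splitting.

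Finally, taking $e = e_0$, the duality identification above specializes to
\[
\Hom_R(F^e_* R((p^e-1)\Delta'), R) \;\cong\; F^e_* R\bigl(-(p^e-1)(K_R + \Delta')\bigr) \;=\; F^e_* R(-\divisor(f)),
\]
and this last module is free of rank one over $F^e_* R$ via multiplication by $f$. I anticipate that the main subtlety is not the structural idea but the careful bookkeeping of reflexive $\sHom$'s, ceilings, and sign conventions: one must ensure all the ``iso on regular locus plus reflexive'' identifications hold globally on $\Spec R$, and that the element $f$ produced from $\varphi$ indeed sits inside the progressively smaller rank-one reflexive sheaves appearing in the argument.
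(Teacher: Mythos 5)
Your proof is correct and follows the same approach as the cited Schwede--Smith Proposition 3.12 (the present paper only quotes the result without reproving it): translate the splitting $\varphi$ into a rational section via the duality $\Hom_R(F^{e_0}_*R(\lceil(p^{e_0}-1)\Delta\rceil),R)\cong F^{e_0}_*R((1-p^{e_0})K_R-\lceil(p^{e_0}-1)\Delta\rceil)$, let $D\geq 0$ be its divisor of zeros, and absorb $D$ into $\Delta$. One small slip: the displayed definition of $D$ has the wrong signs and should read $D:=\divisor(f)+(1-p^{e_0})K_R-\lceil(p^{e_0}-1)\Delta\rceil$; your subsequent checks (a)--(c) and the $F$-purity and freeness arguments all silently use this corrected $D$ (indeed (c) does not hold with the $D$ as written), so nothing downstream is affected.
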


We also will need a weak global generalization of this statement stated below.

\begin{lemma}
\label{lem.GlobalSharpFPureCanBeAssumedNice}
Suppose that $(X, \Delta)$ is sharply $F$-pure.  Then there exists a $\Delta' \geq \Delta$ such that $(X, \Delta')$ is sharply $F$-pure and that $(p^e - 1)\Delta'$ is an integral Weil divisor for some $e > 0$.
\end{lemma}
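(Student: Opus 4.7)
The proof should be essentially a direct globalization of Lemma 2.12 (or rather, just its ``integrality'' conclusion), and in fact no serious globalization argument is needed because the construction of $\Delta'$ can be made from the data of $\Delta$ and a single integer $e$.

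My plan is as follows. Since $(X,\Delta)$ is sharply $F$-pure, unwinding the definition at every stalk (Remark 2.10(i)) shows that there exists some $e > 0$ such that the composition
\[
\O_X \to F^e_* \O_X \to F^e_* \O_X(\lceil (p^e - 1)\Delta \rceil)
\]
splits as a map of $\O_X$-modules. Fix such an $e$, and set
\[
\Delta' := \frac{1}{p^e - 1}\,\bigl\lceil (p^e - 1)\Delta \bigr\rceil .
\]
Since $\lceil (p^e-1)\Delta\rceil$ is an integral Weil divisor with $\lceil (p^e-1)\Delta\rceil \geq (p^e - 1)\Delta$, it is immediate that $\Delta'$ is an effective $\bQ$-divisor with $\Delta' \geq \Delta$, and that $(p^e - 1)\Delta' = \lceil (p^e-1)\Delta\rceil$ is an integral Weil divisor.

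It remains to verify that $(X,\Delta')$ is sharply $F$-pure using this same $e$. Because $(p^e - 1)\Delta'$ is already integral,
\[
\lceil (p^e - 1)\Delta' \rceil \;=\; (p^e - 1)\Delta' \;=\; \lceil (p^e - 1)\Delta \rceil,
\]
so the splitting displayed above is literally the same map whose splitting witnesses sharp $F$-purity of $(X,\Delta')$ for this $e$. Thus $(X,\Delta')$ is sharply $F$-pure.

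There is essentially no obstacle here: the only subtlety to flag is that one must choose a single $e$ that works globally (i.e.\ at every stalk simultaneously), but that is built into the global definition of sharp $F$-purity given in Remark 2.10(i) together with the fact that sharp $F$-purity is checked on the evaluation-at-$1$ map of a coherent sheaf $\sHom_{\O_X}(F^e_*\O_X(\lceil (p^e-1)\Delta\rceil), \O_X)$, whose surjectivity at every point is an open condition in $e$ in the sense that passing to a multiple of $e$ preserves it. Once such an $e$ is fixed, the construction and verification above are purely formal.
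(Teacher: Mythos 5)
Your proposal is correct and takes essentially the same approach as the paper: both fix an $e > 0$ witnessing sharp $F$-purity (implicitly using quasi-compactness of $X$ to choose a single $e$ globally, which the paper addresses by noting that purity at a point holds in a neighborhood for all sufficiently divisible $e$), set $\Delta' = \frac{1}{p^e-1}\lceil (p^e-1)\Delta\rceil$, and observe that the same splitting now witnesses sharp $F$-purity of $(X,\Delta')$.
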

\begin{proof}
The fact that $(X, \Delta)$ is sharply $F$-pure is equivalent to requiring that the evaluation-at-1 map:
\[
\sHom_{\O_X}(F^e_* \O_X(\lceil (p^e - 1) \Delta\rceil), \O_X) \to \O_X
\]
is a surjective map of sheaves for some $e > 0$ (if it holds at a point, it holds in a neighborhood for all sufficiently divisible $e$).  Set $\Delta' = {1 \over p^e - 1}\lceil (p^e - 1) \Delta \rceil$.  The result follows immediately.
\end{proof}

Finally, we recall some well known properties of $F$-singularities of pairs.

\begin{lemma} [(Corollary 3.10, Lemma 3.5, Theorem 3.9 in \cite{SchwedeSmithLogFanoVsGloballyFRegular})]
\label{lem.BasicPropertiesOfFpurityAndRegularity}
Suppose that $(R, \Delta)$ is a strongly $F$-regular pair.
\begin{itemize}
\item[(a)]  If $\Gamma > 0$ is any other $\bQ$-divisor, then for all $1 \gg \varepsilon > 0$, we have that $(R, \Delta + \varepsilon \Gamma)$ is also strongly $F$-regular.
\item[(b)]  For all $1 > s \geq 0$, we have that $(R, s\Delta)$ is strongly $F$-regular\footnote{The analogous statement also holds for sharp $F$-purity}.
\item[(c)]  If  $(R, \Delta + \Gamma)$ is sharply $F$-pure and $(R, \Delta)$ is strongly $F$-regular, then $(R, \Delta + \varepsilon \Gamma)$ is strongly $F$-regular for all $1 > \varepsilon \geq 0$.
\end{itemize}
\end{lemma}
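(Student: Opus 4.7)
My plan is to prove (b) first, then (c), and finally bootstrap from (c) to obtain (a).

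For (b), since $0 \leq s < 1$ and $\Delta \geq 0$, we have $\lceil(p^e - 1)s\Delta\rceil \leq \lceil(p^e - 1)\Delta\rceil$ component-wise, hence a natural inclusion $R(\lceil(p^e - 1)s\Delta\rceil) \hookrightarrow R(\lceil(p^e - 1)\Delta\rceil)$. For each nonzero $c \in R$, choose $e$ so that strong $F$-regularity of $(R,\Delta)$ provides a splitting $\pi : F^e_* R(\lceil(p^e - 1)\Delta\rceil) \to R$ of the map sending $1 \mapsto c$; composing $\pi$ with this inclusion splits the corresponding map into $F^e_* R(\lceil(p^e - 1)s\Delta\rceil)$, giving strong $F$-regularity of $(R, s\Delta)$. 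The same argument handles the sharp $F$-purity version noted in the footnote.

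For (c), I would apply Lemma~\ref{lem.FPurePlusEpsilonImpliesFReg} directly, with the pair there being $(R, \Delta + \varepsilon \Gamma)$ and the auxiliary divisor being $(1-\varepsilon)\Gamma$ (strictly positive since $\varepsilon < 1$). The sharp $F$-purity hypothesis of that lemma becomes $(R, (\Delta + \varepsilon\Gamma) + (1-\varepsilon)\Gamma) = (R, \Delta + \Gamma)$, which is provided; and the non-strongly-$F$-regular locus of $(R, \Delta + \varepsilon\Gamma)$ sits inside $\Supp \Gamma = \Supp((1-\varepsilon)\Gamma)$, since on $\Spec R \setminus \Supp \Gamma$ the pair coincides locally with $(R, \Delta)$, which is strongly $F$-regular by hypothesis.

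For (a), the idea is first to exhibit a single $\varepsilon_0 > 0$ for which $(R, \Delta + \varepsilon_0 \Gamma)$ is sharply $F$-pure, and then invoke (c). Pick a nonzero $g \in R$ with $\Supp \Gamma \subseteq V(g)$, and apply strong $F$-regularity of $(R, \Delta)$ with $c = g$ to obtain $e > 0$ and a splitting $\phi : F^e_* R(\lceil(p^e - 1)\Delta\rceil) \to R$ with $\phi(g) = 1$. Choose $\varepsilon_0 > 0$ small enough that $\lceil(p^e - 1)\varepsilon_0 \Gamma\rceil \leq \Div(g)$, which is possible because every component of $\Supp \Gamma$ appears in $\Div(g)$. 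Then
\[
\lceil(p^e - 1)(\Delta + \varepsilon_0 \Gamma)\rceil \leq \lceil(p^e - 1)\Delta\rceil + \lceil(p^e - 1)\varepsilon_0 \Gamma\rceil \leq \lceil(p^e - 1)\Delta\rceil + \Div(g),
\]
yielding an inclusion $R(\lceil(p^e - 1)(\Delta + \varepsilon_0 \Gamma)\rceil) \hookrightarrow R(\lceil(p^e - 1)\Delta\rceil + \Div(g))$. Multiplication by $g$ carries the latter into $R(\lceil(p^e - 1)\Delta\rceil)$, and composing the inclusion, multiplication by $g$, and $\phi$ yields a map $\psi : F^e_* R(\lceil(p^e - 1)(\Delta + \varepsilon_0\Gamma)\rceil) \to R$ with $\psi(1) = \phi(g) = 1$, so $(R, \Delta + \varepsilon_0\Gamma)$ is sharply $F$-pure. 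Applying (c) with $\varepsilon_0 \Gamma$ in place of $\Gamma$ then gives that $(R, \Delta + \varepsilon \Gamma)$ is strongly $F$-regular for all $0 \leq \varepsilon < \varepsilon_0$, which is exactly (a).

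The only mildly technical point is the bookkeeping of ceilings and vanishing orders of $g$ along the components of $\Supp \Gamma$ in (a); once $e$ is fixed by the initial splitting the choice of $\varepsilon_0$ is automatic, and the remaining steps are short.
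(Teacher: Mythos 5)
Your proof is correct. The paper itself does not prove this lemma---it simply cites Corollary 3.10, Lemma 3.5, and Theorem 3.9 of Schwede--Smith---so there is no internal argument to compare against; your write-up supplies a clean, self-contained derivation consistent in spirit with that reference. The logical structure ((b) by an easy inclusion of sheaves, (c) as a direct specialization of Lemma~\ref{lem.FPurePlusEpsilonImpliesFReg}, and (a) by first manufacturing a single sharply $F$-pure perturbation $(R,\Delta+\varepsilon_0\Gamma)$ and then invoking (c)) is exactly the right bootstrap, and the ceiling bookkeeping in (a) is handled correctly: with $e$ fixed by the splitting at $c=g$, shrinking $\varepsilon_0$ forces $\lceil(p^e-1)\varepsilon_0\Gamma\rceil \le \Div(g)$ because every component of $\Supp\Gamma$ occurs with coefficient $\ge 1$ in $\Div(g)$, after which multiplication by $g$ bridges the twist. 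The only implicit hypothesis worth noting is $\Gamma \ge 0$ in (c) (needed so that $(1-\varepsilon)\Gamma > 0$ and so that $\Delta + \varepsilon\Gamma$ is an honest effective $\bQ$-divisor), but this is clearly intended given the context of (a) and the definition of a pair.
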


\subsection{Pulling back Weil divisors under flat maps}

We will repeatedly pull back Weil divisors under flat maps in this paper and so it is important that we briefly discuss this process.

Suppose that $R \to S$ is a flat map of normal domains corresponding to $f : \Spec S \to \Spec R$.  Suppose that $M$ is any finitely generated reflexive $R$-module (\ie the natural map is an isomorphism $M \cong \Hom_R(\Hom_R(M, R), R)$), then $M \tensor_R S$ is also $S$-reflexive.  This is easy since $M \tensor_R S \cong \Hom_R(\Hom_R(M, R), R) \tensor_R S \cong \Hom_S(\Hom_S(M \tensor_R S, S), S)$ because $S$ is $R$-flat.

Recall that an effective integral Weil divisor on $\Spec R$ is simply a choice of a rank-1 reflexive $R$-module $M$ with a choice of a map $R \to M$ (up to multiplication by a unit of $R$).  Tensoring such a module by $S$ gives us a map $S \to M \tensor_R S$ and so determines an effective Weil divisor on $\Spec S$, in particular we have a map $f^*$ from $\Div(\Spec R)$ to $\Div(\Spec S)$.  If $D$ and $E$ are effective divisors corresponding to reflexive sheaves $M$ and $N$ respectively, then $D+E$ corresponds to $(M \tensor N)^{\vee\vee}$ (where $\bullet^{\vee}$ denotes $\Hom_R(\bullet, R)$).  It follows that $f^*(D+E)$ corresponds to $f^*D + f^*E$.  The same formulas hold for anti-effective divisors which correspond to inclusions $M \subseteq R$.  Finally, a similar set of arguments yield that $f^*(-D) = -f^*(D)$ and so in general, we have a group homomorphism: $f^* :  \Div(\Spec R) \to \Div(\Spec S)$.

We can extend this to $\bQ$-divisors formally.  However, we should point out that if $D$ is a $\bQ$-divisor on $\Spec R$, then $\lceil f^* D \rceil \leq f^* \lceil D \rceil$.  To see this, write $\lceil D \rceil = D + B$ where $B$ is an effective $\bQ$-divisor.  It follows that $f^* \lceil D \rceil = f^* D + f^* B$ and since $f^* B$ is still effective, the claim follows.

\subsection{Divisors on fibers}
Finally, suppose that $Y \to X$ is a map of finite type where $X$ is a variety over an algebraically closed field, $Y$ is geometrically normal over $X$ (meaning that every fiber is geometrically normal in the sense of Definition \ref{def.GeometricallyP}), and $\Delta$ is a $\bQ$-divisor on $Y$.  Then for an open dense set of points $s \in X$, $Y_s$ is normal and we obtain divisors $\Delta_s = \Delta|_{Y_s}$ on the fibers $Y_s$.  To see this claim, it is harmless to assume that $X$ is regular and $\Delta$ is an integral prime Cartier divisor (by removing the singular locus of $Y$ and working by linearity).  But then the claim follows directly from generic freeness \cite[Theorem 14.4]{EisenbudCommutativeAlgebra} (assume that $\O_{\Delta}$ is free over $\O_X$).  We now also present an alternate way to restrict Weil divisors that we will certainly use.

\begin{lemma}
With notation as above, if $M$ is a rank-1 reflexive module on $Y$, then for an open dense set of fibers $Y_s$, $M_s = M \tensor_{\O_X} k(s)$ is a rank-1 reflexive module on $Y_s$.
\end{lemma}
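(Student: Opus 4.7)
The plan is to reduce the statement to the discussion of restricting Weil divisors that immediately precedes the lemma. Since $M$ is rank-$1$ reflexive on the normal scheme $Y$, it is isomorphic to $\O_Y(D)$ for some Weil divisor $D$. On the open set $U \subseteq Y$ where $D$ is Cartier---which contains every codimension-one point of $Y$---the restriction $M|_U$ is an invertible sheaf, and the complement $Z := Y \setminus U$ has $\codim_Y Z \geq 2$.

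The first step is to apply generic freeness (Theorem \ref{thm.GenFreeness}) to the coherent sheaves $\O_Z$, $M$, and $M^\vee := \sHom_{\O_Y}(M, \O_Y)$, shrinking $X$ so that $Y \to X$ is flat, all three sheaves are flat over $\O_X$, and each irreducible component of $Z$ either fails to dominate $X$ (so contributes nothing to the generic fiber) or surjects onto $X$ with equidimensional fibers of dimension $\dim Z - \dim X$. A routine dimension count then yields $\codim_{Y_s} Z_s \geq 2$ for $s$ in a dense open subset of $X$. On the complement $U_s = Y_s \setminus Z_s$, the sheaf $(M_s)|_{U_s} \cong (M|_U) \otimes_{\O_X} k(s)$ is invertible, and by the preceding discussion it coincides with $\O_{U_s}(D|_U|_{U_s})$, where $D|_{Y_s}$ is the well-defined restriction of $D$ as a Weil divisor on $Y_s$.

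It remains to show that $M_s$ itself is reflexive on $Y_s$; granting this, since $M_s$ and $\O_{Y_s}(D|_{Y_s})$ agree on $U_s$ and are both rank-$1$ reflexive, they are isomorphic on all of $Y_s$. The main obstacle is this reflexivity check, and my preferred approach is via generic base change for $\sHom$: after further shrinking $X$ using generic freeness applied to both $M^\vee$ and $M^{\vee\vee}$, the formation of these sheaves commutes with base change to fibers, so that $(M_s)^{\vee\vee} \cong (M^{\vee\vee})_s \cong M_s$ by the reflexivity of $M$ on $Y$. Alternatively, one can directly verify Serre's condition $S_2$ for $M_s$ at every $y \in Y_s$ with $\dim \O_{Y_s, y} \geq 2$ by combining the depth formula for the flat map $Y \to X$ with the $S_2$ property of $M$ on $Y$ and of $\O_{X, s}$ at $s$ in the regular locus of $X$.
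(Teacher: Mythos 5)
Your strategy mirrors the paper's in outline---generic freeness plus commutation of $\Hom$ with base change---but the key step in your preferred approach is asserted rather than proved, and the assertion does not follow from the inputs you cite. Generic freeness of $M^\vee$ and $M^{\vee\vee}$ as $\O_X$-modules does \emph{not} imply that the natural base change maps $(M^\vee)_s \to (M_s)^\vee$ and $(M^{\vee\vee})_s \to (M_s)^{\vee\vee}$ are isomorphisms: since $\O_{Y_s}$ is not flat over $\O_Y$, one has to kill higher $\Tor$ obstructions, and these are controlled by the \emph{higher} $\Ext$ modules, not merely by the $\Hom$ modules. The paper's proof addresses exactly this point: it writes $M = \Hom_R(N, R)$, applies generic freeness to the entire sequence $\Ext^0_R(N,R), \dots, \Ext^{\dim X}_R(N,R)$, and then uses the derived isomorphism $\myR\Hom^{\mydot}_R(N,R)\otimes^{\bf L}_R R/\langle x_1,\dots,x_n\rangle \cong \myR\Hom^{\mydot}_R(N, R/\langle x_1,\dots,x_n\rangle)$. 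The $B$-freeness of each $\Ext^i_R(N,R)$ makes the off-diagonal hyper-$\Tor$ terms vanish, so that taking $H^0$ gives $M_s \cong \Hom_{R_s}(N_s, R_s)$, which is reflexive because $R_s$ is normal. In your double-dualization formulation you would have to run this argument twice (once for $M^\vee$, once for $M^{\vee\vee}$), applying generic freeness each time to the relevant higher $\Ext$ modules, not just to the $\Hom$ modules themselves.

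Your alternative route via $S_2$ and the depth formula also does not work as stated. For $X$ regular, $\O_{X,s}\to\O_{Y,y}$ flat local, and $M_y$ flat over $\O_{X,s}$ (after generic freeness), the formula gives $\operatorname{depth}_{\O_{Y_s,y}}((M_s)_y) = \operatorname{depth}_{\O_{Y,y}}(M_y) - \dim\O_{X,s}$. Reflexivity of $M$ on the normal variety $Y$ guarantees only $\operatorname{depth}_{\O_{Y,y}}(M_y) \geq 2$, so you get $\operatorname{depth}_{\O_{Y_s,y}}((M_s)_y) \geq 2 - \dim\O_{X,s}$, which is vacuous at any non-generic $s$. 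The needed inequality $\operatorname{depth}_{\O_{Y,y}}(M_y) \geq 2 + \dim\O_{X,s}$ is in fact true after the paper's shrinking, but it is a \emph{consequence} of the base change isomorphism $M_s \cong \Hom_{R_s}(N_s, R_s)$ rather than an available input. (The preliminary discussion of the Cartier locus $Z$ and $\codim_{Y_s} Z_s \geq 2$ is correct but plays no role in the paper's argument, nor does it resolve the reflexivity check.)
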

It follows immediately that if $M$ corresponds to a Weil divisor $D$ then $M_s$ corresponds to $D|_{Y_s} =: D_s$.  This lemma also follows from \cite[Theorem 2]{KollarHullsHusks}.

\begin{proof}
Without loss of generality we may assume that $X = \Spec B$ is regular and that $Y = \Spec R$ is normal.  Fix $N$ such that $M = \Hom_R(N, R)$.  By generic freeness, replacing $X = \Spec B$ with an open subset, we may assume that $M$, $N$, $R$, as well as $\Ext^0_R(N, R), \dots, \Ext^{\dim X}_R(N, R)$ are all $B$-free.  Fix a point $s \in X$.  By replacing $B$ by $B_s$, which is $B$ localized at $s$ (which is flat) and $R$ by $R \tensor_B B_s$, we may assume that $s$ is generated by a regular sequence $x_1, \dots, x_n \in B$.

We know that
\begin{equation}
\label{eq.DerivedHomTensorFTD}
\myR \Hom^{\mydot}_R(N, R) \tensor^{\bf L}_R R/\langle x_1, \dots, x_n \rangle \cong \myR \Hom^{\mydot}_R(N, R/\langle x_1, \dots, x_n \rangle)
\end{equation}
by \cite[Chapter II, 5.14]{HartshorneResidues}.  Taking zeroth cohomology of the left side is computed by $\Tor_i^R(\Ext^i_R(N, R), R/\langle x_1, \dots, x_n\rangle)$ for $i = 0, \dots, n$.  But these are zero for $i > 0$ since $\Ext^i_R(N, R)$ is $B$-free for $i = 1, \dots, n$ (and hence $x_1, \dots, x_n$ form a regular sequence on it).  Thus
\[
\myH^0(\myR \Hom^{\mydot}_R(N, R) \tensor^{\bf L}_R R/\langle x_1, \dots, x_n \rangle) \cong \Hom_R(N, R) \tensor R/\langle x_1, \dots, x_n \rangle.
\]
Taking zeroth cohomology of Equation \ref{eq.DerivedHomTensorFTD} then yields:
\[
M \tensor_R R/\langle x_1, \dots, x_n \rangle \cong \Hom_R(N, R) \tensor_R R/\langle x_1, \dots, x_n \rangle \cong \Hom_R(N, R/\langle x_1, \dots, x_n \rangle).
\]
But
\[
\Hom_R(N, R/\langle x_1, \dots, x_n \rangle) \cong \Hom_{R/\langle x_1, \dots, x_n \rangle}(N/\langle x_1, \dots, x_n \rangle, R/\langle x_1, \dots, x_n \rangle).
\]
It then follows that $M|_{Y_s} = M \tensor_R R/\langle x_1, \dots, x_n \rangle$ is reflexive which is what we wanted to prove.
\end{proof}

\section{Restating Cumino-Greco-Manaresi for pairs}

In \cite{CuminoGrecoManaresiAxiomaticBertini}, the authors developed a framework for proving Bertini-theorems for classes of singularities in characteristic $p > 0$.  The idea of the proof is the same as in related results contained in \cite{JouanolouTheoremDeBertini} and similar ideas have appeared in a number of other expositions as well, also see \cite{ZariskiTheoremOfBertiniOnTheVariableSingularPoints,SeidenbergHyperplaneSectionOfNormal}.  For an excellent historical survey of the origins of Bertini's theorem see \cite{KleimanBertiniHistory}.  The framework of Cumino-Greco-Manaresi is convenient for our purpose however.
We note that these ideas were further developed in \cite{SpreaficoAxiomaticTheoryForBertini} in the case of non-algebraically closed fields (and in other directions).  However, we work in the algebraically closed case for simplicity, although several of the results of this paper do hold more generally with no additional work \cf \cite[Corollaries 4.3, 4.5]{SpreaficoAxiomaticTheoryForBertini}.

We recall the following axioms for a local property $\scr{P}$ in case of locally Noetherian schemes.  These are taken directly from Cumino-Greco-Manaresi \cite{CuminoGrecoManaresiAxiomaticBertini}, \cf \cite{SpreaficoAxiomaticTheoryForBertini}:
\begin{enumerate}
\item[(A1)] whenever $\phi:Y\to Z$ is a flat morphism with regular fibers and $Z$ is $\scr{P}$, then $Y$ is $\scr{P}$ too;
\item[(A2)] let $\phi:Y\to S$ be a morphism of finite type, where $Y$ is excellent and $S$ is integral with generic point\footnote{The point corresponding to the zero ideal locally.} $\eta$; if $Y_{\eta}$ is geometrically $\scr{P}$, then there exists an open neighborhood $U$ of $\eta$ in $S$ such that $Y_s$ is geometrically $\scr{P}$ for each $s\in U$.
\item[(A3)] $\scr{P}$ is open on schemes of finite type over a field.
\end{enumerate}

\begin{remark}
For a partial list of properties satisfying (A2) (with citations to EGA), see \cite[Appendix E]{GortzWedhornAlgebraicGeometry1}.
\end{remark}

Cumino, Greco, and Manaresi proved in \cite{CuminoGrecoManaresiAxiomaticBertini} that, once a property $\scr{P}$ on an algebraic variety $V$ over an algebraically closed field satisfies the three axioms (A1), (A2) and (A3), then the Second Theorem of Bertini holds for $\scr{P}$. More precisely,

\begin{theorem}[(Theorem 1 in \cite{CuminoGrecoManaresiAxiomaticBertini})]
\label{thm.1FromCGM}
Let $X$ be a scheme of finite type over an algebraically closed field $k$, let $\phi:X\to \bP^n_k$ be a morphism with separably generated (not necessarily algebraic) residue field extensions. Suppose $X$ has a local property $\scr{P}$ verifying $({\rm A1})$ and $({\rm A2})$. Then there exists a nonempty open subset $U$ of $(\bP^n_k)^{\vee}$ such that $\phi^{-1}(H)$ has the property $\scr{P}$ for each hyperplane $H\in U$.
\end{theorem}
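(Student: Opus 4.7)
The plan is to run the classical ``universal hyperplane section'' argument through the axiomatic framework.  First, I would form the incidence variety $\Lambda \subseteq \bP^n_k \times_k (\bP^n_k)^{\vee}$ parametrizing pairs $(p,[H])$ with $p \in H$, and take the fiber product
\[
Y := X \times_{\bP^n_k} \Lambda,
\]
with its two projections $\pi_1 : Y \to X$ and $\pi_2 : Y \to (\bP^n_k)^{\vee}$.  Since $\pi_2^{-1}([H]) = \phi^{-1}(H)$ for every $k$-rational hyperplane $[H]$, it suffices to produce a nonempty open $U \subseteq (\bP^n_k)^{\vee}$ over which every fiber of $\pi_2$ has $\scr{P}$.

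Both projections from $\Lambda$ are $\bP^{n-1}$-bundles, so by base change $\pi_1$ is smooth with geometrically regular fibers.  Axiom $({\rm A1})$ applied to $\pi_1$ (using that $X$ has $\scr{P}$) yields that $Y$ has $\scr{P}$, and hence the generic fiber $Y_\eta$ over $K := k((\bP^n_k)^{\vee})$ also has $\scr{P}$, since its stalks coincide with the corresponding stalks of $Y$.  The crucial step --- and the only place where the separability hypothesis on $\phi$ enters --- is to upgrade this to ``$Y_\eta$ is \emph{geometrically} $\scr{P}$''.  Factoring $\pi_2$ as $Y \to \Lambda \to (\bP^n_k)^{\vee}$, the second map is smooth and the first is the base change of $\phi$; each therefore has separable residue field extensions at every point, and so does their composite.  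Thus for each $y \in Y_\eta$ the extension $K \subseteq \kappa(y)$ is separable in MacLane's sense, which forces $\kappa(y) \otimes_K K'$ to be reduced --- hence a finite product of fields, i.e.\ a regular Artinian ring --- for every finite extension $K'/K$.  Consequently the flat morphism $Y_\eta \otimes_K K' \to Y_\eta$ has regular fibers, so a second application of $({\rm A1})$ shows that $Y_\eta \otimes_K K'$ has $\scr{P}$.

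Finally, axiom $({\rm A2})$ applied to $\pi_2 : Y \to (\bP^n_k)^{\vee}$, a finite-type morphism whose target is integral with generic point $\eta$ and whose generic fiber has just been shown to be geometrically $\scr{P}$, yields an open neighborhood $U$ of $\eta$ on which every fiber $Y_s$ is geometrically $\scr{P}$, and in particular $\scr{P}$.  For each hyperplane $H \in U$ this gives $\phi^{-1}(H) = Y_{[H]}$ has $\scr{P}$, which is the desired conclusion.  The main obstacle is the middle paragraph: without the separably generated residue field extension hypothesis on $\phi$, the base change $\kappa(y) \otimes_K K'$ can fail to be reduced and $\scr{P}$ need not propagate under base change, so the transition from ``$\scr{P}$'' to ``geometrically $\scr{P}$'' collapses --- reflecting the fact that this hypothesis really is necessary, as in the \Mustata--Yoshida example mentioned in the introduction.
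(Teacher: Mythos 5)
Your argument sets up the same incidence correspondence as the reference, and the use of (A1) for the first projection $\pi_1$ and of (A2) at the end are both fine.  The gap is in the middle step, where you claim (a) that the smooth map $\Lambda \to (\bP^n_k)^\vee$ has separable residue field extensions, and (b) that the base change of $\phi$ along $\Lambda \to \bP^n_k$ inherits separable residue field extensions.  Neither of these is true, and the conclusion you draw from them --- that $\kappa(y)/K$ is separable for every $y \in Y_\eta$, so that $Y_\eta \otimes_K K' \to Y_\eta$ has regular fibers --- actually fails.  A smooth morphism only has \emph{geometrically regular fibers}; it need not have separable residue field extensions.  Already for $\bA^1_{\bF_p(t)} \to \Spec \bF_p(t)$ the point $(x^p - t)$ has residue field $\bF_p(t^{1/p})$, which is purely inseparable over the base field.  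Likewise base change destroys the separable-residue-field property: with $K = \bF_p$, $L = \bF_p(u)$, $M = \bF_p(s)$, the prime $(u^p - s)$ of $L \otimes_K M$ has residue field $\bF_p(u)$ mapping inseparably onto $M = \bF_p(u^p)$.

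The failure is visible already when $\phi = \mathrm{id}_{\bP^n_k}$ and $n \geq 2$.  Then $Y_\eta = \Lambda_\eta \cong \bP^{n-1}_K$ with $K = k(c_1,\dots,c_n)$, and the generic point $y$ of the hypersurface $T_1^p - c_1$ has $\kappa(y) = K(c_1^{1/p})(T_2,\dots,T_{n-1})$, which is not separable over $K$.  Taking $K' = K(c_1^{1/p})$, the fiber of $Y_\eta \otimes_K K' \to Y_\eta$ over $y$ is $\Spec\bigl(\kappa(y)\otimes_K K'\bigr)$, which contains the non-reduced ring $K(c_1^{1/p})\otimes_K K(c_1^{1/p}) \cong K(c_1^{1/p})[X]/\bigl((X-c_1^{1/p})^p\bigr)$; this fiber is not regular, so (A1) does not apply.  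The actual argument of Cumino--Greco--Manaresi (reproduced in the paper's proof of Theorem~\ref{thm.CGMForPairs}) applies (A1) \emph{once}, to the single composite $q : Y_K \to X$, and the nontrivial content --- which really does use the separability hypothesis on $\phi$, but relative to $X$, not to $Y_\eta$ --- is the verification that the fibers of \emph{this} map are regular.  That claim does not reduce to, and is not implied by, separability of $\kappa(y)/K$, which as the example above shows can simply fail.  So the two-step application of (A1) you propose cannot be repaired without importing the full CGM fiber computation, and the proof as written has a genuine gap.
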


\begin{remark}
In the above statement, and throughout the paper, we use $(\bP^n_k)^{\vee}$ to denote the dual-projective space of hyperplanes of $\bP^n_k$.
\end{remark}

The following corollaries follow easily.
\begin{corollary}[(Second Theorem of Bertini, Corollary 1 in \cite{CuminoGrecoManaresiAxiomaticBertini})]
\label{cor.SecondTheoremOfBertini}
Let $V$ be an algebraic variety over $k=\bar{k}$, and let $S$ be a finite dimensional linear system on $V$. Assume that the rational map $V\dashrightarrow \mathbb{P}^n$ corresponding to $S$ induces (whenever defined) separably generated field extensions. Let $\scr{P}$ be a property satisfying ${\rm (A1)}$, ${\rm (A2)}$, and ${\rm (A3)}$. Then the general element of $S$, considered as a subscheme of $V$, has the property $\scr{P}$ but perhaps at the base points of $S$ and at the points of $V$ which are not $\scr{P}$.
\end{corollary}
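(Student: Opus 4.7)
The plan is to deduce this from Theorem \ref{thm.1FromCGM} by restricting $V$ to a suitable open subset on which the rational map becomes a morphism and on which $\scr{P}$ already holds at every point. Let $B \subseteq V$ denote the base locus of $S$, and let $Z \subseteq V$ denote the set of points at which $V$ fails to have property $\scr{P}$; by axiom (A3), $Z$ is closed. Set $V^\circ := V \setminus (B \cup Z)$, an open subscheme of $V$. On $V^\circ$, the rational map to $\bP^n_k$ associated with $S$ is everywhere defined and yields an honest morphism $\phi : V^\circ \to \bP^n_k$, whose residue field extensions are separably generated by the standing hypothesis of the corollary. By construction, every point of $V^\circ$ has property $\scr{P}$.

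Next I would directly apply Theorem \ref{thm.1FromCGM} to $\phi : V^\circ \to \bP^n_k$. Since $\scr{P}$ satisfies (A1) and (A2) on $V^\circ$ and the induced residue field extensions are separably generated, this produces a nonempty open subset $U \subseteq (\bP^n_k)^\vee$ such that $\phi^{-1}(H)$ has property $\scr{P}$ for every hyperplane $H \in U$.

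Finally I would interpret $\phi^{-1}(H)$ as the scheme-theoretic intersection of $V^\circ$ with the element $D_H \in S$ corresponding to $H$. Hence for each $H \in U$, the subscheme $D_H$ has property $\scr{P}$ at every point of $V^\circ$, and therefore has $\scr{P}$ everywhere on $V$ except possibly at points of $V \setminus V^\circ = B \cup Z$ --- which is precisely the permitted exceptional locus (base points of $S$ together with non-$\scr{P}$ points of $V$) described in the statement. There is no serious obstacle: the corollary is essentially a direct repackaging of Theorem \ref{thm.1FromCGM}, and the only minor point to verify is that shrinking from $V$ to $V^\circ$ does not disturb the separably-generated residue field hypothesis, which is automatic since $\phi$ on $V^\circ$ is by definition the restriction of the given rational map.
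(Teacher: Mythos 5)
Your argument is correct and is exactly the standard deduction of the Second Theorem of Bertini from the main theorem in this axiomatic framework; the paper itself does not reprove this corollary (it simply cites Cumino--Greco--Manaresi and remarks that it ``follows easily''), so there is nothing different to compare. You correctly use (A3) to conclude that the non-$\sP$ locus $Z$ is closed, pass to $V^\circ = V \setminus (B \cup Z)$ on which $\phi$ is an honest morphism and $\sP$ holds everywhere, apply Theorem \ref{thm.1FromCGM}, and then identify $\phi^{-1}(H)$ with the restriction to $V^\circ$ of the member $D_H \in S$ corresponding to $H$. The only tiny omission is a word on the degenerate case $V^\circ = \emptyset$, where the conclusion is vacuous; this does not affect correctness.
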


Also, whenever $\scr{P}$ satisfies the three axioms (A1), (A2) and (A3), then $\scr{P}$ is preserved by the general hyperplane section.

\begin{corollary}[(Corollary 2 in \cite{CuminoGrecoManaresiAxiomaticBertini})]
Let $V\subseteq \mathbb{P}^n$ be a closed subscheme and let $\scr{P}$ be a local property satisfying ${\rm (A1)}$ and ${\rm (A2)}$. Then if $V$ is $\scr{P}$, the general hyperplane section of $V$ is $\scr{P}$. If moreover $\scr{P}$ satisfies ${\rm (A3)}$, then the $\scr{P}$-locus of $V$ is preserved by the general hyperplane section; that is for the general hyperplane section $H$ of $\mathbb{P}^n$ one has $\scr{P}(V\cap H)\supseteq \scr{P}(V)\cap H$.
\end{corollary}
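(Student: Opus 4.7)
The plan is to deduce the corollary directly from Theorem \ref{thm.1FromCGM} by applying it to two different morphisms into $\bP^n_k$. For the first assertion, I would simply take $\phi$ to be the closed immersion $V \hookrightarrow \bP^n_k$. Since $\phi$ is a closed immersion, at every point $x \in V$ the residue field extension $k(\phi(x)) \to k(x)$ is an isomorphism and is therefore trivially separably generated. By hypothesis $V$ satisfies $\scr{P}$ everywhere, so Theorem \ref{thm.1FromCGM} produces a nonempty open subset $U \subseteq (\bP^n_k)^{\vee}$ such that $\phi^{-1}(H) = V \cap H$ has $\scr{P}$ for every $H \in U$. This is exactly the first statement.

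For the second assertion, assuming now that $\scr{P}$ also satisfies (A3), the locus $W := \scr{P}(V)$ is an open subscheme of $V$. I would apply Theorem \ref{thm.1FromCGM} to the morphism $\phi' \colon W \hookrightarrow V \hookrightarrow \bP^n_k$, which is again a locally closed immersion and hence has trivial (in particular separably generated) residue field extensions. By construction $W$ has property $\scr{P}$ at every one of its points, so Theorem \ref{thm.1FromCGM} supplies a nonempty open subset of $(\bP^n_k)^{\vee}$ parameterizing hyperplanes $H$ for which $(\phi')^{-1}(H) = W \cap H = \scr{P}(V) \cap H$ has property $\scr{P}$.

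To finish, I would translate this into a statement about $V \cap H$ using the locality of $\scr{P}$: since $W$ is open in $V$, for any $x \in W \cap H$ the local ring $\O_{V \cap H, x}$ coincides with $\O_{W \cap H, x}$, so $V \cap H$ satisfies $\scr{P}$ at every point of $\scr{P}(V) \cap H$. This gives $\scr{P}(V \cap H) \supseteq \scr{P}(V) \cap H$, as desired. There is no real obstacle beyond the bookkeeping above; the entire content of the corollary is already packaged into Theorem \ref{thm.1FromCGM}, and the role of (A3) is precisely to guarantee that $\scr{P}(V)$ is an honest open subscheme to which the theorem can be applied.
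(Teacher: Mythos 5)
Your proof is correct and is the standard deduction of the corollary from Theorem \ref{thm.1FromCGM}. The paper itself does not reprove this corollary — it cites \cite{CuminoGrecoManaresiAxiomaticBertini} directly — and the argument you give (applying the theorem to the closed immersion $V \hookrightarrow \bP^n_k$ for the first assertion, and to the open subscheme $W = \scr{P}(V)$ for the second, then using that $\scr{P}$ is local and $W \cap H$ is open in $V \cap H$) is exactly how the deduction is meant to go.
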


This result even holds if $V$ is contained in a hyperplane $H$ of $\mathbb{P}^n$, since the linear system of hyperplanes on $\mathbb{P}^n$ restricts to the complete linear system of hyperplanes on $H$.

In this paper, we will prove the Second Theorem of Bertini for classes of $F$-singularities for pairs. To this end, we have to modify Cumino-Greco-Manaresi's axioms for pairs as follows:

\begin{enumerate}
\item[(A1P)] Let $\phi:Y\to Z$ be a flat morphism with regular fibers and $\Delta$ be a $\bQ$-divisor on $Z$. If $(Z,\Delta)$ is $\scr{P}$, then $(Y,\phi^*\Delta)$ is $\scr{P}$ too.
\item[(A2P)]Let $\kappa: Y \to Z$ be a morphism of finite type of $F$-finite schemes, where $Z$ is an integral scheme with generic point $\eta$.  Fix a $\bQ$-divisor\footnote{If $\Delta \neq 0$ we assume $Y_{\eta}$ is normal, otherwise we make no such assumption.} $\Delta$ on $Y$ such that $(Y_{\eta}, \Delta_{\eta})$ is geometrically $\sP$.  Then there
exists an open neighborhood $U$ of $\eta$ in $Z$ such that $(Y_s, \Delta_s)$ is geometrically $\sP$ for each $s \in U$.
\item[(A3P)]  $\sP$ is an open condition for pairs $(Y, \Delta)$ of finite type over an $F$-finite field.
\end{enumerate}

\begin{remark}
Note that built into (A2P) is the requirement that $(Y_s, \Delta_s)$ makes sense.  In particular, we require that $\Delta_s$ can be interpreted as a $\bQ$-divisor on $Y_s$ (of course this holds for an open dense set of points on $Z$ since $\kappa$ is generically flat.)
\end{remark}

Following the strategy in \cite{CuminoGrecoManaresiAxiomaticBertini}, we need only to establish
\begin{theorem}
\label{thm.CGMForPairs}
Let $X$ be a normal scheme of finite type over an algebraically closed field $k$, let $\Delta$ be a $\bQ$-divisor on $X$, and let $\phi:X\to \bP^n_k$ be a morphism with separably generated (not necessarily algebraic) residue field extensions. Suppose $(X,\Delta)$ has a local property $\scr{P}$ verifying $({\rm A1P})$ and $({\rm A2P})$. Then there exists a nonempty open subset $U$ of $(\bP^n_k)^{\vee}$ such that $\left(\phi^{-1}(H),\Delta|_{\phi^{-1}(H)}\right)$ has the property $\scr{P}$ for each hyperplane $H\in U$.
\end{theorem}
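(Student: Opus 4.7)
The plan is to follow the argument of \cite{CuminoGrecoManaresiAxiomaticBertini}, adapted to accommodate the divisor $\Delta$ via the pair-versions of the axioms. Consider the incidence variety
\[
I := \{(x,H)\in X\times_k (\bP^n_k)^{\vee} : \phi(x)\in H\}
\]
together with its two projections $\pi_1\colon I\to X$ and $\pi_2\colon I\to (\bP^n_k)^{\vee}$. The first projection is a Zariski-locally trivial $\bP^{n-1}_k$-bundle (the fiber over $x$ being the hyperplanes through $\phi(x)$), hence flat with smooth, geometrically integral fibers. Axiom (A1P) applied to $\pi_1$ and the pair $(X,\Delta)$ therefore gives that $(I,\pi_1^*\Delta)$ satisfies $\sP$.

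Next, the plan is to apply (A2P) to $\pi_2\colon I\to (\bP^n_k)^{\vee}$, noting that the restriction of $\pi_1^*\Delta$ to the fiber of $\pi_2$ over a point $H$ agrees with $\Delta|_{\phi^{-1}(H)}$ as discussed in the subsection on divisors on fibers. The theorem thus reduces to verifying that the generic fiber $(I_\eta,(\pi_1^*\Delta)_\eta)$ is geometrically $\sP$, where $\eta$ is the generic point of $(\bP^n_k)^{\vee}$ and $K:=k(\eta)$.

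To prove this geometric condition, fix an arbitrary finite extension $K'/K$ and analyze the composition
\[
I_\eta\otimes_K K' \;\longrightarrow\; I_\eta \;\hookrightarrow\; I \;\xrightarrow{\;\pi_1\;}\; X.
\]
The inclusion $I_\eta\hookrightarrow I$ is stalk-wise a localization, $\pi_1$ is smooth with regular fibers, and the finite base change is flat; so the composition is flat. Its fiber over $x\in X$ is obtained by first localizing the smooth $k(x)$-scheme $\pi_1^{-1}(x)\cong\bP^{n-1}_{k(x)}$ at the preimage of $\eta$ in $(\bP^n_k)^{\vee}$, and then base-changing over $K$ to $K'$. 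The separable residue field hypothesis on $\phi$ propagates through the $\bP^{n-1}$-bundle $\pi_1$ and ensures that the residue fields occurring in this localization are separably generated over $K$; tensoring them over $K$ with $K'$ therefore yields finite products of separable field extensions, which are regular. Hence the composition above has regular fibers, and (A1P) implies that the pullback of $(X,\Delta)$ to $I_\eta\otimes_K K'$ satisfies $\sP$. This is precisely geometric $\sP$ of $(I_\eta,(\pi_1^*\Delta)_\eta)$, and (A2P) then delivers the required open $U\subseteq (\bP^n_k)^{\vee}$.

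The main obstacle is establishing the regularity of the fibers of $I_\eta\otimes_K K'\to X$ after an arbitrary --- and possibly inseparable --- finite base change $K'/K$. The separable residue field hypothesis on $\phi$ is exactly what makes this step go through, and its failure is what leads to the sort of pathology exhibited by the \Mustata{}--Yoshida example mentioned in Remark~\ref{rem.MusYos}.
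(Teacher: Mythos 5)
Your argument follows essentially the same route as the paper's proof: your incidence variety $I$ is exactly the scheme the paper calls $Y := X\times_{\bP^n_k} Z$, and the reduction via (A2P) to geometric $\sP$ of the generic fiber, followed by (A1P) applied to the flat composition $I_\eta\tensor_K K'\to X$, is precisely the argument given there. The only genuine difference is that you sketch a proof that this composition has regular fibers, whereas the paper does not argue it at all but simply cites Cumino--Greco--Manaresi for that fact; your sketch is somewhat informal (the fiber over $x$ is not quite a localization of $\bP^{n-1}_{k(x)}$ but a fiber product with $\Spec K'$ over $(\bP^n_k)^{\vee}$, and ``residue fields occurring in this localization'' glosses over the fact that the fiber is a positive-dimensional scheme when $\operatorname{trdeg}_k k(x) > 0$), but it does identify the correct mechanism: separability of $k(x)/k$ forces $k(x)\tensor_k F$ to be regular for every field extension $F/k$, and this is what makes the fibers regular. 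One small redundancy worth noting: the opening application of (A1P) to $\pi_1$ to conclude that $(I,\pi_1^*\Delta)$ is $\sP$ is never used --- axiom (A2P) asks only for geometric $\sP$ of the \emph{generic fiber}, not $\sP$ of the total space, and you establish that separately.
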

\begin{proof}
The following proof is taken directly from \cite{CuminoGrecoManaresiAxiomaticBertini}.  All we add is the divisor pair.

Let $Z$ be the reduced closed subscheme of $\bP^n_k \times_k (\bP^n_k)^{\vee}$ obtained by taking the closure of the set
\[
\big\{(x,H) \in \,\big|\, x \in H \big\}.
\]
We claim that the projection map $\beta : Z \to \bP^n_k$ is flat.  Indeed, it is clearly generically flat, and since it is finite type it is flat at some closed point.  But for any $z \in Z$, $\O_{Z, z}$ is certainly isomorphic to that of any other $z$.  Furthermore, the inclusion of rings $\O_{\bP^n_k, \beta(z)} \to \O_{Z, z}$ is also independent of the choice of $z$, up to isomorphism.  Thus $\beta$ is flat in general.

Just as in \cite{CuminoGrecoManaresiAxiomaticBertini}, we form a commutative diagram:
\[
\xymatrix{
Y := X \times_{\bP^n_k} Z \ar[dd]_{\gamma} \ar[dr]_{\rho} \ar[rr]^{\sigma} & & \ar[dl]_{\pi} Z \ar@{^{(}->}[d] \\
& (\bP^n_k)^{\vee} & \bP^n_k \times_k (\bP^n_k)^{\vee} \ar[d] \ar[l]^-{\pi'} \\
X \ar[rr]_{\phi} & & \bP^n_k
}
\]
We describe each map appearing above.
\begin{itemize}
\item{} $\sigma$ is the projection.
\item{} $\pi'$ is the projection and thus so is $\pi$.
\item{} $\rho = \pi \circ \sigma$.
\item{} $\gamma$ is the projection.  Note that $\gamma$ is flat since it is a base change of the projection $Z \to \bP^n_k$.
\end{itemize}
For each hyperplane $H \subseteq \bP^n_k$ viewed also as a point in $(\bP^n_k)^{\vee}$, we have
\[
\phi^{-1}(H) \cong \sigma^{-1}(\pi^{-1}(H)) = \rho^{-1}(H).
\]
We obtain a divisor $\Delta_Y$ on $Y := X \times_{\bP^n_k} Z$.  Therefore we want to show that for a dense open set of points of $(\bP^n_k)^{\vee}$, that the fibers of $\rho$, which yield pairs $(Y_{s}, (\Delta_{Y})_s)$, satisfy property $\sP$.  Set $\eta$ to be the generic point of $(\bP^n_k)^{\vee}$.  By (A2P), we merely need to show that the generic fiber\footnote{in other words, the fiber of the generic point} $(Y_{\eta}, (\Delta_{Y})_{\eta})$ is geometrically $\sP$.  Therefore, we only need to show that for each finite field extension $K$ over $k(\eta)$, that $(Y_{K}, (\Delta_{Y})_{K})$ is $\sP$.

Consider the following composition which we denote by $q$:
\[
Y_K = Y_{\eta} \times_{k(\eta)} K = (X \times_{\bP^n_k} Z_{\eta}) \times_{k(\eta)} K \to X \times_{\bP^n_k} Z_{\eta} \to X \times_{\bP^n_k} Z \xrightarrow{\gamma} X
\]
where the final map is the projection.  Notice that this map is flat since each map in the composition is flat.  In \cite{CuminoGrecoManaresiAxiomaticBertini}, Cumino, Greco and Manaresi proved that $q$ has regular fibers.  Thus by (A1P), the pair $(Y_{K}, q^* \Delta) = (Y_{K}, (\Delta_{Y})_K)$ is $\sP$.  This completes the proof.
\end{proof}

\section{\textnormal{(A1P)} for sharply $F$-pure and strongly $F$-regular pairs}

The property (A1) has been heavily studied for $F$-singularities.  In this section, we generalize these results to pairs.  First however, we review some history.  Such problems were first studied in \cite[Section 7]{HochsterHunekeSmoothBaseChange} where it was shown that $F$-regularity is well behaved in many cases (and the same arguments imply that $F$-purity is equally well behaved).  In \cite{VelezOpennessOfTheFRationalLocus}, V\'elez showed that $F$-rationality behaved well for smooth morphisms.  In \cite{EnescuBehaviorOfFrationalUnderBaseChange,HashimotoCMFinjectiveHoms}, F.~Enescu and M.~Hashimoto independently proved a variant of (A1) for morphisms with geometrically $F$-rational fibers (instead of regular fibers).  However, without the \emph{geometric} hypothesis on the fiber, it is unknown whether (A1) holds for $F$-rational singularities.  Even worse, in \cite[Section 4]{EnescuLocalCohomologyAndFStability}, F.~Enescu showed that $F$-injectivity can fail to satisfy (A1), more about this will be discussed in Section \ref{sec.WeakNormalityAndFailureOfBertini} below.  Additional discussion of base change problems for $F$-singularities can be found in \cite{AberbachExtensionOfWeaklyAndStronglyFregularRingsByFlat,BravoSmithBehaviorOfTestIdealsUnderSmooth,AberbachEnescuTestIdealsAndBaseChange}.


The following theorem of Hochster and Huneke will be crucial to our proof of (A1P) for sharp $F$-purity.  We originally learned this material from the exposition given in
\cite[Pages 167--175]{HochsterFoundations}.
\begin{theorem}[(Lemma 7.10 in \cite{HochsterHunekeSmoothBaseChange})]
\label{injective hull under flat local base change}
Let $(R,\bm,K)\to (S,\bn,L)$ be an arbitrary  flat local morphism.
\begin{enumerate}
\item If $z \in \bn$ is not a zero-divisor on $S/\bm S$, then $z$ is not a eradicator on $S$ and $R\to S/zS$ is again a (faithfully) flat local homomorphism.
\item More generally, if $y_1,\dots,y_t\in \bn$ form a regular sequence on $S/\bm S$, then they form a regular sequence in $S$ and $R\to S/(y_1,\dots,y_t)$ is again a faithfully flat local homomorphism. The elements $y_1,\dots,y_t$ also form an $S$-sequence on $S\otimes_RM$ for every nonzero finitely generated $R$-module M.
\item Suppose that $M$ has finite length over $R$ with $V=\Ann_M(\bm)$, and that $S/\bm S$ is zero-dimensional with socle $Q$. Then we have an injection $Q\otimes_K V\to S/\bm S\otimes_K V\cong S\otimes_RV\to S\otimes_RM$, under which $Q\otimes_KV$ is sent onto the socle in $S\otimes_RM$.
\item If  the closed fiber $S/\bm S$ is Gorenstein, $y_1,\dots,y_t\in \bn$ are elements in $S$ whose images in $S/\bm S$ form a system of parameters, $I_a=(y^a_1,\dots,y^a_t)$, then $E_S(L)\cong (\varinjlim_a S/I_a)\otimes_RE_R(K)$, where the map $S/I_a\to S/I_{a+1}$ is induced by multiplication by $y_1\cdots y_t$.  In other words, if $I=(y_1,\dots,y_t)S$ then
\[E_S(L)\cong E_R(K)\otimes_RH^t_I(S).\]
\end{enumerate}
\end{theorem}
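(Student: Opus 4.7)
The plan is to prove the four parts in order, since (2) follows by induction from (1), and (4) leverages (2) and (3).

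For (1), I would invoke the local criterion of flatness. First I would show $z$ is a non-zerodivisor on $S$. Using $R$-flatness of $S$, one identifies $\mathrm{gr}_{\bm S}(S)$ with $\mathrm{gr}_{\bm}(R) \otimes_{K} (S/\bm S)$ as graded rings, so multiplication by the image $\bar z$ on each graded piece factors as $\id \otimes (\text{mult by } \bar z)$ and is injective because $\bar z$ is a non-zerodivisor on $S/\bm S$. Combined with Krull's intersection $\bigcap_n \bm^n S = 0$, any element of $\Ann_S(z)$ would produce a nonzero initial form killed by $\bar z$ in $\mathrm{gr}_{\bm S}(S)$, forcing $\Ann_S(z) = 0$. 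Hence $0 \to S \xrightarrow{z} S \to S/zS \to 0$ is exact; tensoring with $K$ over $R$ and using $\Tor_1^R(K, S) = 0$ together with injectivity of $\bar z$ on $S/\bm S$ yields $\Tor_1^R(K, S/zS) = 0$. Since $S/zS$ is $\bm$-adically separated (because $\bm S \subseteq \bn$ and Krull intersection applies in $S$), the local criterion delivers $R$-flatness of $S/zS$. Part (2) is an induction on $t$; the module statement follows by running the same argument with $S \otimes_R M$ in place of $S$, since $S \otimes_R M$ inherits from $S$ the relevant Tor-vanishings.

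For (3), since $\bm V = 0$, the module $V$ is a $K$-vector space and $S \otimes_R V \cong (S/\bm S) \otimes_K V$. Choosing generators $a_1, \dots, a_r$ of $\bm$, the exact sequence $0 \to V \to M \to M^r$ (whose right arrow is multiplication by the $a_i$) remains exact after tensoring with flat $S$, yielding $\Ann_{S \otimes_R M}(\bm) = S \otimes_R V$. The residual $\bn$-action on $(S/\bm S) \otimes_K V$ factors through the first tensor factor, so the $\bn$-socle of $S \otimes_R M$ is $Q \otimes_K V$.

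Part (4) is the main content. I would first reduce to the case $t = 0$: writing $H^t_I(S) = \varinjlim_a S/I_a$ (valid because $y_1, \dots, y_t$ is a regular sequence on $S$ by (2)), the commutation of tensor with $\varinjlim$ gives $E_R(K) \otimes_R H^t_I(S) = \varinjlim_a E_R(K) \otimes_R (S/I_a)$. Each $R \to S/I_a$ is flat local with $0$-dimensional Gorenstein closed fiber $(S/\bm S)/(\bar y_1^a, \ldots, \bar y_t^a)$, so the $t = 0$ statement identifies the $a$-th term with $E_{S/I_a}(L)$. The transition maps in the direct limit (multiplication by $y_1 \cdots y_t$) correspond exactly to the standard inclusions $E_{S/I_a}(L) \hookrightarrow E_{S/I_{a+1}}(L)$, whose union is $E_S(L)$. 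For the $t = 0$ case itself, part (3) applied to the ascending filtration of $E_R(K)$ by finite-length submodules $E_R(K)_n = \Hom_R(R/\bm^n, E_R(K))$ identifies the socle of $S \otimes_R E_R(K)$ with $Q \otimes_K K \cong L$ ($1$-dimensional over $L$, using that $S/\bm S$ is $0$-dimensional Gorenstein so $Q \cong L$). One then checks essentiality over this socle on each finite-length layer by flatness, and finally verifies that $S \otimes_R E_R(K)$ is $S$-injective.

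The main obstacle is precisely this last step: genuine $S$-injectivity of $S \otimes_R E_R(K)$ in the $t = 0$ case. This is where the Gorenstein hypothesis on the closed fiber is substantively used. I expect the cleanest route is to compare $S \otimes_R E_R(K)$ with $\Hom_R(S, E_R(K))$ (which is always $S$-injective by adjunction), match their $\bn$-torsion finite-length pieces using finite presentation of $R/\bm^n$ and flatness of $S$ (which gives $S \otimes_R \Hom_R(R/\bm^n, E_R(K)) \cong \Hom_R(R/\bm^n, S \otimes_R E_R(K))$), and then invoke that over the Noetherian ring $S$ a direct limit of injectives is injective.
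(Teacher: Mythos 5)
The paper does not prove this statement; it is quoted verbatim from Hochster--Huneke (Lemma 7.10 of the cited paper), and the authors defer to the exposition in Hochster's foundations notes. So there is no ``paper's own proof'' to compare against, and your argument has to stand on its own.

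Your treatment of parts (1)--(3) is sound. For (1), the associated-graded argument is one legitimate route; the more standard version inducts directly on $n$ to get that $z$ is a nonzerodivisor on $S/\bm^n S$, concludes $z$ is a nonzerodivisor on $S$ by Krull intersection, and then reads off $\Tor_1^R(K, S/zS) = 0$ from the sequence $0 \to S \xrightarrow{z} S \to S/zS \to 0$ together with injectivity of $\bar z$ on $S/\bm S$. Parts (2) and (3) are as you say. The reduction of (4) to $t=0$ via $H^t_I(S) = \varinjlim_a S/I_a$ is the right idea, although identifying the maps $E_R(K)\otimes_R(S/I_a) \to E_R(K)\otimes_R(S/I_{a+1})$ (induced by $\cdot\, y_1\cdots y_t$) with the inclusions $\Ann_{E_S(L)}(I_a) \hookrightarrow \Ann_{E_S(L)}(I_{a+1})$ requires making compatible choices of the $t=0$ isomorphisms; this is doable but not automatic.

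The genuine gap is exactly where you flag it: injectivity of $S\otimes_R E_R(K)$ in the $t=0$ case. The route you propose -- comparing with $J := \Hom_R(S, E_R(K))$ and matching $\bn$-torsion pieces -- does not work, and it is worth seeing precisely why. The module $J$ is $S$-injective, but its socle is
\[
\Hom_S(L, J) \cong \Hom_R(L, E_R(K)) \cong \Hom_K(L, K),
\]
which is one-dimensional over $L$ only when $L/K$ is finite. In the situations actually used in the paper (local rings of a family over a non-closed point) $L/K$ is typically an infinite, even non-algebraic, extension, so the $\bn$-torsion part of $J$ is a direct sum of many copies of $E_S(L)$ rather than a single copy, while $S\otimes_R E_R(K)$ has a one-dimensional socle by part (3). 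The two modules therefore cannot have the same finite-length torsion layers, and there is not even a natural map between them (your isomorphism $S\otimes_R\Hom_R(R/\bm^n, E_R) \cong \Hom_R(R/\bm^n, S\otimes_R E_R)$ relates $S\otimes_R E_R$ to itself, not to $J$). What the Gorenstein hypothesis actually buys, and what the Hochster--Huneke argument exploits, is that $S\otimes_R E_R(K)$ is an essential $\bn$-torsion extension of its one-dimensional socle $L$, hence embeds in $E_S(L)$, and then one must show this embedding is onto -- typically by reducing to the case of Artinian base, using Matlis duality at each finite level $R/\bm^n \to S/\bm^n S$, and invoking the Gorenstein property of the fiber to see that the Matlis dual commutes with base change. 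That step is substantive and is missing from your sketch.
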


Recall the following definition of canonical modules (when the ring is not necessarily Cohen-Macaulay) from \cite{HochsterHunekeIndecomposable}.
\begin{definition}
Let $(R,\bm,K)$ be a $d$-dimensional equidimensional Noetherian local ring. A finitely generated $R$-module $\omega$ is called a canonical module of $R$ if  it is isomorphic to $H^d_{\bm}(R)^{\vee}$.

More generally, when $R$ is not necessarily local, an $R$-module $\omega$ is called a canonical module of $R$ if $\omega_P$ is a canonical module of $R_P$ for each prime ideal $P$ of $R$.
\end{definition}

The following theorem on the behavior of canonical modules under flat extensions is certainly known to experts, but we do not know of a proof in the generality we need, \cf \cite[Theorem 3.3.14(a)]{BrunsHerzog}, \cite[Chapter V, Section 9]{Hartshorne}.

\begin{lemma}
\label{Lemma: pull back canonical module}
Let $\varphi:R\to S$ be a flat ring homomorphism with Gorenstein fibers between Noetherian equidimensional rings. Assume that $R$ admits a canonical module $\omega_R$, then $\omega_R\otimes_RS$ is also a canonical module of $S$.
\end{lemma}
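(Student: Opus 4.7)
Since the canonical-module property is checked prime-by-prime, I would localize at a prime $P\subseteq S$ and at $p:=\varphi^{-1}(P)$, reducing the statement to the following local claim: if $(R,\mathfrak m, K)\to(S,\mathfrak n,L)$ is a flat local map of Noetherian equidimensional rings with Gorenstein closed fiber, then $\omega_R\otimes_R S$ is isomorphic to $H^{d+t}_{\mathfrak n}(S)^{\vee_S}$, where $d=\dim R$, $t=\dim S/\mathfrak m S$, so $\dim S=d+t$ by the flat-local dimension formula, and $(-)^{\vee_S}=\Hom_S(-,E_S(L))$ denotes Matlis duality. Note that $(\omega_R)_p = \omega_{R_p}$ by the very definition of canonical module, so this reduction is legitimate.

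The approach is to compute the Matlis-$S$-dual of $\omega_R\otimes_R S$ and unwind it into $H^{d+t}_{\mathfrak n}(S)$. I would choose $y_1,\ldots,y_t\in\mathfrak n$ whose images in $S/\mathfrak m S$ form a system of parameters; by parts (2) and (4) of Theorem~\ref{injective hull under flat local base change} these $y_i$ form a regular sequence on $S$ and satisfy
\[
E_S(L) \;\cong\; E_R(K) \otimes_R H^t_I(S), \qquad I := (y_1,\ldots,y_t)S.
\]
Iterating part (2), each Koszul quotient $S/(y_1^a,\ldots,y_t^a)$ is $R$-flat, hence so is the direct limit $H^t_I(S)=\varinjlim_a S/(y_1^a,\ldots,y_t^a)$.

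Using these inputs, I would carry out two chains of isomorphisms that meet at the common module $H^d_{\mathfrak m}(R)\otimes_R H^t_I(S)$. On the one hand, tensor-hom adjunction together with the Hochster--Huneke formula gives
\[
\Hom_S(\omega_R\otimes_R S,\,E_S(L)) \;\cong\; \Hom_R\bigl(\omega_R,\,E_R(K)\otimes_R H^t_I(S)\bigr);
\]
since $\omega_R$ is finitely presented and $H^t_I(S)$ is $R$-flat, Hom passes through the tensor factor to give $\Hom_R(\omega_R,E_R(K))\otimes_R H^t_I(S)$, which equals $H^d_{\mathfrak m}(R)\otimes_R H^t_I(S)$ after Matlis double-dualizing the Artinian module $H^d_{\mathfrak m}(R)$. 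On the other hand, flat base change for local cohomology applied to the $R$-flat module $H^t_I(S)$ yields $H^d_{\mathfrak m}(R)\otimes_R H^t_I(S)\cong H^d_{\mathfrak m S}(H^t_I(S))$; since $\sqrt{\mathfrak m S+I}=\mathfrak n$ and $H^t_I(S)$ is $I$-torsion, the functors $\Gamma_{\mathfrak m S}$ and $\Gamma_{\mathfrak n}$ agree on this module, so this in turn equals $H^d_{\mathfrak n}(H^t_I(S))$. Finally, the Grothendieck composition-of-support spectral sequence $H^p_{\mathfrak n}(H^q_I(S))\Rightarrow H^{p+q}_{\mathfrak n}(S)$ degenerates because $H^q_I(S)=0$ for $q\ne t$, producing $H^d_{\mathfrak n}(H^t_I(S))\cong H^{d+t}_{\mathfrak n}(S)$. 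Chaining everything together, $(\omega_R\otimes_R S)^{\vee_S}\cong H^{d+t}_{\mathfrak n}(S)$, which is precisely the characterization of the canonical module.

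The step I expect to be the main obstacle is the Hom-tensor interchange in the first chain: the identification $\Hom_R(\omega_R,E_R(K)\otimes_R H^t_I(S)) \cong \Hom_R(\omega_R,E_R(K))\otimes_R H^t_I(S)$ rests on a presentation $R^b\to R^a\to\omega_R\to 0$ (available because canonical modules are finitely generated over the Noetherian ring $R$) combined with $R$-flatness of $H^t_I(S)$, but one has to be careful because $H^t_I(S)$ is itself only a filtered colimit of finitely generated modules. A secondary technicality is verifying that $\Gamma_{\mathfrak m S}$ and $\Gamma_{\mathfrak n}$ really coincide on $I$-torsion modules; this follows from an explicit annihilator comparison once one knows $\sqrt{\mathfrak m S + I}=\mathfrak n$, which itself is automatic because $y_1,\ldots,y_t$ is a system of parameters on the fiber and $\dim S = d+t$.
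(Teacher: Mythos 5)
Your proposal follows essentially the same route as the paper: reduce to the local case, Matlis-dualize, invoke Hochster--Huneke's $E_S(L)\cong E_R(K)\otimes_R H^t_I(S)$, pass $\Hom_R(\omega_R,-)$ through the flat tensor factor $H^t_I(S)$, and recognize $H^d_{\mathfrak m}(R)\otimes_R H^t_I(S)$ as $H^{d+t}_{\mathfrak n}(S)$. Two small points of divergence are worth recording. First, the paper performs an explicit preliminary reduction to complete local rings; this is not merely cosmetic, since to convert the computed isomorphism $(\omega_R\otimes_R S)^{\vee_S}\cong H^{d+t}_{\mathfrak n}(S)$ into the conclusion that $\omega_R\otimes_R S$ itself is a canonical module one must dualize once more, and the double Matlis dual of the finitely generated module $\omega_R\otimes_R S$ is its $\mathfrak n$-adic completion rather than the module itself. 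Your proof implicitly uses the characterization ``$M$ is canonical iff $\widehat M\cong H^{\dim S}_{\mathfrak n}(S)^{\vee}$'' without flagging it; the paper's completion step makes this transparent. Second, for the identification $H^d_{\mathfrak m}(R)\otimes_R H^t_I(S)\cong H^{d+t}_{\mathfrak n}(S)$, you invoke flat base change for local cohomology together with the composition-of-support spectral sequence (which degenerates since $y_1,\dots,y_t$ is an $S$-regular sequence, so $H^q_I(S)$ vanishes for $q\ne t$). The paper instead proves and uses the explicit \v{C}ech/Koszul formula $H^n_{(x)}(M)\otimes_R H^t_{(y)}(N)\cong H^{n+t}_{(x,y)}(M\otimes_R N)$ from its Remark 4.6(i). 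Both are valid; the paper's choice is the more economical one in context, since the explicit isomorphism (and the image of socle generators under it) is reused in the key Lemma 4.7 where the actual splitting arguments take place, whereas the spectral-sequence route would need to be supplemented to track those elements.
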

\begin{proof}
It suffices to show that for each prime ideal $\bq$ of $S$, the module $\omega_{\bp}\otimes_{R_{\bp}}S_{\bq}$ is a canonical module of $S_{\bq}$ where $\bp=\bq\cap R$. Without loss of generality, we may assume that $\varphi:(R,\bm,K)\to (S,\bn,L)$ is a flat local homomorphism with Gorenstein fibers and $\omega_R$ is a canonical module of $R$. Since a module is a canonical module if and only if its completion is a canonical module of the completion of the ring, we may assume that $R$ and $S$ are complete. Then it suffices to show that $\Hom_S(\omega_R\otimes_RS,E_S)$ is isomorphic to the top local cohomology module of $S$. Choose a system of parameters $x_1,\dots,x_d$ of $R$ (assuming that $\dim(R)=d$) and elements $\underline{\bf{y}} = y_1,\dots,y_t$ in $S$ whose images in $S/\bm S$ form a system of parameters (assuming that $\dim(S/\bm S)=t$), then we have
\begin{align}
\Hom_S(\omega_R\otimes_RS,E_S)&\cong \Hom_R(\omega_R,\Hom_S(S,E_S))\notag\\
&=\Hom_R(\omega_R,E_S)\notag\\
&\cong \Hom_R(\omega_R,E_R\otimes_RH^t_{\langle\underline{{\bf{y}}}\rangle}(S))\notag
\end{align}
We claim that, for each finitely generated $R$-module $M$, a flat $R$-module $F$, and an $R$-module $N$, we have $\Hom_R(M,N\otimes_RF)\cong \Hom_R(M,N)\otimes_RF$, and we reason as follows (or simply cite \cite[Chapter II, Proposition 5.14]{HartshorneResidues}). It is clear that our claim is true when $F$ is a free $R$-module. Then since each flat $R$-module is a direct limit of free modules and $\Hom_R(M,-)$ commutes with direct limits, our claim follows. By Remark \ref{tensor of top local cohomology}(ii) we know that $H^t_{\langle\underline{{\bf{y}}}\rangle}(S)$ is a flat $R$-module, and hence we have
\begin{align}
\Hom_S(\omega_R\otimes_RS,E_S)&\cong \Hom_R(\omega_R,E_R\otimes_RH^t_{\langle\underline{{\bf{y}}}\rangle}(S))\notag\\
&\cong \Hom_R(\omega_R,E_R)\otimes_RH^t_{\langle\underline{{\bf{y}}}\rangle}(S)\notag\\
&\cong H^d_{\bm}(R)\otimes_RH^t_{\langle\underline{{\bf{y}}}\rangle}(S)\notag\\
&\cong H^{d+t}_{\bn}(S)\notag
\end{align}
This proves that $\omega_R\otimes_RS$ is a canonical module of $S$.
\end{proof}

\begin{remark}
\label{tensor of top local cohomology}
Let $R,S,y_1,\dots,y_t$ be as in Theorem \ref{injective hull under flat local base change}(iv), let $x_1,\dots,x_d\in \bm$ be elements of $S$ and let $M$ (and $N$) be an $R$-module (an $S$-module).
\begin{enumerate}
\item It is straightforward to check that
\[H^n_{(x_1,\dots,x_n)}(M)\otimes_RH^t_{(y_1,\dots,y_t)}(N)\cong H^{n+t}_{(x_1,\dots,x_n,y_1,\dots,y_t)}(M\otimes_RN),\]
and the isomorphism is given by
\[\frac{\alpha}{x^a_1\cdots x^a_n}\otimes \frac{\beta}{y^b_1\cdots y^b_t}\mapsto \frac{\alpha\otimes \beta}{x^a_1\cdots x^a_ny^b_1\cdots y^b_t}.\]
In particular, if we assume that $S/\bm S$ is regular and $y_1,\dots,y_t$ form a regular system of parameters of $S/\bm S$ and $u$ is a socle generator of $E_R(K) \cong H^{\dim R}_{\bm}(\omega_R)$, then the image of $u\otimes\frac{1}{y_1\cdots y_t}$ in $E_S(L)$, under the isomorphism
\begin{align}
E_R(K)\otimes_RH^t_I(S) &\cong H^{\dim(R)}_{\bm}(\omega_R)\otimes_RH^t_{\langle y_1, \dots, y_t \rangle}(S)\notag\\
&\cong H^{\dim(R)+t}_{\bn}(\omega_R\otimes_RS)\notag\\
& \cong H^{\dim(S)}_{\bn}(\omega_S)\ ({\rm because\ of\ Lemma\ \ref{Lemma: pull back canonical module}})\notag\\
&\cong E_S(L),\notag
\end{align}
is also a socle generator of $E_S(L)$.

\item Since each $S/(y^a_1,\dots,y^a_t)$ is flat over $R$ by Theorem \ref{injective hull under flat local base change}(ii) and $H^t_I(S)\cong \varinjlim_aS/(y^a_1,\dots,y^a_t)$, one can see that $H^t_I(S)$ is also a flat $R$-module.
\end{enumerate}
\end{remark}

\begin{lemma}
\label{lem.KeyA1PLemma}
Let $R,S$ be $F$-finite normal rings. Assume that $f:Y=\Spec(S)\to \Spec(R) = X$ is a flat morphism with regular fibers and that $\Delta$ is an effective $\bQ$-divisor on $X = \Spec R$.  If $(X,\Delta)$ is sharply $F$-pure, so is $(Y,f^*\Delta)$.
\end{lemma}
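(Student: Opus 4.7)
The plan is to work locally and use the Matlis-dual characterization of sharp $F$-purity (Lemma \ref{lem.MatlisDualSharplyFPure}) to reduce the statement to a question about injectivity of maps on top local cohomology; then the flat-base-change machinery developed above (Theorem \ref{injective hull under flat local base change}, Lemma \ref{Lemma: pull back canonical module}, and Remark \ref{tensor of top local cohomology}) will transport this injectivity from $R$ to $S$. First I would localize at an arbitrary maximal ideal $\bn$ of $S$, setting $\bm = \bn \cap R$; this is legitimate because sharp $F$-purity is pointwise on maximal ideals (Remark \ref{rem.BasicPropertiesOfFPurePairs}(i)). The induced map $(R,\bm) \to (S,\bn)$ is a flat local homomorphism of $F$-finite normal local rings with regular (closed) fiber. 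Next, by enlarging $\Delta$ using Lemma \ref{lem.GlobalSharpFPureCanBeAssumedNice}, I may assume $(p^e-1)\Delta$ is an integral Weil divisor for some $e>0$; this is harmless since pullback is monotone and sharp $F$-purity passes to smaller effective divisors.

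The main step is then to apply Lemma \ref{lem.MatlisDualSharplyFPure}: setting $L^{(e)}_X := \O_X(K_X + (p^e-1)(K_X+\Delta))$, sharp $F$-purity of $(R,\Delta)$ is equivalent to the injectivity of the $R$-linear composition
\[
\alpha_R\colon E_R \iso H^d_\bm(\omega_R) \rtarr H^d_\bm(F^e_* L^{(e)}_X).
\]
I pick $y_1,\dots,y_t \in \bn$ whose images in $S/\bm S$ form a regular system of parameters and set $I = \langle y_1,\dots,y_t\rangle S$. Since $H^t_I(S)$ is $R$-flat (Remark \ref{tensor of top local cohomology}(ii)), tensoring $\alpha_R$ over $R$ with $H^t_I(S)$ preserves injectivity. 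I then identify the resulting map with the composition required by Lemma \ref{lem.MatlisDualSharplyFPure} for $(S, f^*\Delta)$: Theorem \ref{injective hull under flat local base change}(iv) gives $E_R \tensor_R H^t_I(S) \iso E_S$ on the source side, and Remark \ref{tensor of top local cohomology}(i) gives
\[
H^d_\bm(F^e_* L^{(e)}_X) \tensor_R H^t_I(S) \iso H^{d+t}_\bn(F^e_* L^{(e)}_X \tensor_R S)
\]
on the target side. Because the Frobenius on $S$ extends that on $R$, one has $F^e_{*,R} L^{(e)}_X \tensor_R S \iso F^e_{*,S}(L^{(e)}_X \tensor_R S)$ as $R$-modules, and this underlying abelian-group identification is all that is needed for computing local cohomology.

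The main obstacle I anticipate is the concluding identification $L^{(e)}_X \tensor_R S \iso L^{(e)}_Y$, where $L^{(e)}_Y := \O_Y(K_Y + (p^e-1)(K_Y + f^*\Delta))$ is the analogous reflexive sheaf on $Y$. The key input is Lemma \ref{Lemma: pull back canonical module}, which ensures $\omega_S \iso \omega_R \tensor_R S$, so I may pick representatives $K_Y = f^*K_X$; combined with the compatibility between reflexive-sheaf tensoring and Weil-divisor pullback developed in the previous subsection, together with the earlier integrality reduction of $(p^e-1)\Delta$ (which removes any discrepancy between $f^*\lceil \cdot \rceil$ and $\lceil f^*\cdot \rceil$), this yields the desired isomorphism. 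The resulting injective map is then precisely the Matlis-dual criterion for sharp $F$-purity of $(S, f^*\Delta)$, which by the converse direction of Lemma \ref{lem.MatlisDualSharplyFPure} completes the proof.
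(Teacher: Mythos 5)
The overall strategy you outline is the right one and uses the same ingredients the paper does (the Matlis-dual criterion \ref{lem.MatlisDualSharplyFPure}, Theorem \ref{injective hull under flat local base change}(iv), Lemma \ref{Lemma: pull back canonical module}, Remark \ref{tensor of top local cohomology}), but there is a genuine error at the pivotal step: the claimed isomorphism $F^e_{*,R} L^{(e)}_X \tensor_R S \iso F^e_{*,S}(L^{(e)}_X \tensor_R S)$ is false. The left side is the quotient of $L^{(e)}_X \tensor_{\bZ} S$ by the relations $r^{p^e} x \tensor s = x \tensor rs$, whereas the right side is the quotient by $rx \tensor s = x \tensor rs$ (with the $F^e_*$ merely relabelling the $S$-action). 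These are different abelian groups. For a concrete size check, take $L^{(e)}_X = R$: then $F^e_* R \tensor_R S$ is $S$-free of rank $[F^e_* R : R]$, while $F^e_*(R \tensor_R S) = F^e_* S$ is $S$-free of rank $[F^e_* S : S]$, and these disagree whenever the fibers of $R \to S$ are positive dimensional --- which is exactly the situation here. The correct identity is $F^e_* L^{(e)}_X \tensor_{F^e_* R} F^e_* S \iso F^e_*(L^{(e)}_X \tensor_R S)$, but then the passage from $R$ to $S$ is no longer ``tensor the whole map $\alpha_R$ with a flat $R$-module'': the source $E_R$ of $\alpha_R$ must be base changed by $\tensor_R H^t_I(S)$ while the target must be base changed by $\tensor_{F^e_* R} H^t_{\langle \bf y\rangle}(F^e_* S)$, so flatness alone does not transport injectivity.

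This is precisely the difficulty the paper's proof of Lemma \ref{lem.KeyA1PLemma} is built to handle: instead of tensoring a single map, it sets up a three-row commutative diagram in which the leftmost column is base changed via $x \mapsto x \tensor \frac{1}{y_1\cdots y_t}$ into $\tensor_R H^t_{\langle\bf y\rangle}(S)$, while the remaining columns are base changed via $x \mapsto x \tensor \frac{1}{y_1^{p^e}\cdots y_t^{p^e}}$ into $\tensor_{F^e_* R} H^t_{\langle\bf y\rangle}(F^e_* S)$; the mismatch in exponents is exactly what makes the diagram commute across the Frobenius twist. Injectivity of the bottom row is then obtained not by flatness but by a socle-chasing argument: the left vertical is injective because it sends the socle generator of $E_R$ to the socle generator of $E_S$ (Remark \ref{tensor of top local cohomology}(i)), the right vertical is injective because composing with multiplication by $y_1^{p^e-1}\cdots y_t^{p^e-1}$ recovers the Frobenius pushforward of the left vertical (this step also uses the stronger normalization from Lemma \ref{lem.SharpFPureCanBeAssumedNice}, which makes $(p^e-1)(K_X+\Delta)$ Cartier so that the third column of the first row is $\iso F^e_* E_R$), and then commutativity plus injectivity of the top row forces injectivity of the bottom row. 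To repair your argument you would need to replace the single flat base change with this two-sided, asymmetric base change and the socle argument.
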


\begin{proof}
Without loss of generality, we may assume that $(R, \bm)$ and $(S, \bn)$ are local and that $f$ is a local and so faithfully flat morphism.  Additionally, by making $\Delta$ larger if needed, we may assume that $(p^e - 1)\Delta$ is integral and $(p^e-1)(K_X+\Delta)$ is Cartier and $\Hom_R(F^e_*R((p^e-1)\Delta),R)\cong F^e_*R((1-p^e)(K_X+\Delta))$ by Lemma \ref{lem.SharpFPureCanBeAssumedNice}.

We have the following composition
\[
\begin{array}{rcl}
E_R & \cong & H^d_{\bm}(R(K_R)) \\
& \to & H^d_{\bm}(F^e_* R(p^e(K_R))) \\
& \to & H^d_{\bm}(F^e_* R(p^e(K_R) + (p^e - 1)\Delta)) \\
& \cong & H^d_{\bm}(F^e_* R(K_R)) \cong F^e_* E_R
\end{array}
\]
where the last isomorphism follows since $(p^e - 1)(K_R + \Delta) \sim 0$.  Because $(R, \Delta)$ is sharply $F$-pure, this composition is injective by Lemma \ref{lem.MatlisDualSharplyFPure}.  By assumption $(S/\bm, \bn/\bm)$ is regular and local and so there exists a regular sequence ${\bf{y}} = y_1, \dots, y_t \in S$ whose images in $S/\bm$ form a regular system of parameters.

Now, we also have the following diagram:
{
\thinmuskip=0.35mu
\medmuskip=0.35mu
\thickmuskip=0.35mu
\[
{\scriptsize
\xymatrix@C=8pt{
H^d_{\bm}(R(K_R)) \ar[d] \ar[r] & H^d_{\bm}(F^e_* R(p^e K_R)) \ar[r] \ar[d] & H^d_{\bm}(F^e_* R(p^e K_R + (p^e - 1)\Delta)) \ar[d] \\
H^d_{\bm}(R(K_R)) \tensor_R H^t_{\langle{\bf{y}} \rangle}(S) \ar[r] \ar[d]_{\sim} & H^d_{\bm}(F^e_* R(p^e K_R ))\tensor_{F^e_*R}  H^t_{\langle{\bf{y}} \rangle}(F^e_* S) \ar[r] \ar[d]_{\sim} & H^d_{\bm}(F^e_* R(p^e K_R + (p^e - 1)\Delta))\tensor_{F^e_* R}  H^t_{\langle{\bf{y}} \rangle}(F^e_* S) \ar[d]_{\sim}\\
H^{d+t}_{\bm S + \langle{\bf{y}} \rangle}(S(K_S)) \ar[r] & H^{d+t}_{\bm S + \langle{\bf{y}} \rangle}(F^e_* S(p^e K_S)) \ar[r] & H^{d+t}_{\bm S + \langle{\bf{y}} \rangle}(F^e_* S(p^e K_S + (p^e -1)f^* \Delta))
}
}
\]
}
where the first vertical map is induced by sending $x \mapsto x \tensor {1 \over y_1 \dots y_t}$ and the later maps are induced by sending $x \mapsto x \tensor {1 \over y_1^{p^e} \dots y_t^{p^e}}$.  The second row of vertical maps are isomorphisms by Remark \ref{tensor of top local cohomology} and Lemma \ref{Lemma: pull back canonical module}.  Now choose a socle generator $z \in H^d_{\bm}(R(K_R)) \cong E_R$.  The vertical composition sends it to a socle generator for $H^{d+t}_{\bm S + \langle{\bf{y}} \rangle}(S(K_S)) \cong E_S$ by Remark \ref{tensor of top local cohomology}.  In particular, it follows that the left vertical map is injective.  We claim that the right vertical map is also injective.  But this follows since $H^d_{\bm}(F^e_* R(p^e K_R + (p^e - 1)\Delta)) \cong F^e_* E_R$ and one can compose the right vertical map with a multiplication by ${y_1}^{p^e - 1} \dots {y_t}^{p^e - 1}$ to obtain the Frobenius pushforward of the left vertical map.

Note that the top row $H^d_{\bm}(R(K_R)) \to H^d_{\bm}(F^e_* R(p^e K_R  + (p^e - 1)\Delta))$ is injective by hypothesis, and thus so is the composition $H^d_{\bm}(R(K_R)) \to H^{d+t}_{\bm S + \langle{\bf{y}} \rangle}(F^e_* S(p^e K_S + (p^e -1)f^* \Delta))$.  It follows that the map
\begin{equation}
\label{eq.InducedMapBetweenInjectiveHulls}
H^{d+t}_{\bm S + \langle{\bf{y}} \rangle}(S(K_S)) \to H^{d+t}_{\bm S + \langle{\bf{y}} \rangle}(F^e_* S(p^e K_S + (p^e -1)f^* \Delta))
\end{equation}
sends the (unique) socle generator to a non-zero element.  Therefore the map in Equation \ref{eq.InducedMapBetweenInjectiveHulls} is injective which proves that $(S, f^* \Delta)$ is sharply $F$-pure by Lemma \ref{lem.MatlisDualSharplyFPure}.
\end{proof}

\begin{corollary}
\label{cor.A1PForFRegFPure}
Strongly $F$-regular and sharply $F$-pure pairs satisfy $\textnormal{(A1P)}$.
\end{corollary}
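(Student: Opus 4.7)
The proposal is to split the corollary into its two halves and handle them separately. The sharply $F$-pure half is essentially immediate: Lemma \ref{lem.KeyA1PLemma} already gives (A1P) for sharp $F$-purity in the affine setting, and since (A1P) is a local condition, covering $Y$ and $Z$ by affine opens reduces us to exactly that lemma. So the only real work is the strongly $F$-regular case, and the plan is to reduce this to sharp $F$-purity via Lemma \ref{lem.FPurePlusEpsilonImpliesFReg}.

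Suppose $f \: Y = \Spec S \to X = \Spec R$ is flat with regular fibers and $(X, \Delta)$ is strongly $F$-regular. Working locally, we may assume $(R,\bm) \to (S,\bn)$ is faithfully flat local. First I would pick an effective divisor $\Gamma_0 \geq 0$ on $X$ whose support contains both $\Sing(R)$ and $\Supp(\Delta)$. This is possible locally because $R$ is excellent and normal so $\Sing(R)$ is a proper closed subscheme, and we may take $\Gamma_0 = \divisor(g_1 g_2)$ where $g_1$ vanishes on $\Sing(R)$ and $g_2$ vanishes on $\Supp(\Delta)$. Then by Lemma \ref{lem.BasicPropertiesOfFpurityAndRegularity}(a), for all sufficiently small $\epsilon > 0$ the pair $(R, \Delta + \epsilon \Gamma_0)$ remains strongly $F$-regular, hence in particular sharply $F$-pure.

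Next I would apply Lemma \ref{lem.KeyA1PLemma} to the sharply $F$-pure pair $(R, \Delta + \epsilon \Gamma_0)$ to conclude that $(S,\, f^*\Delta + \epsilon f^*\Gamma_0)$ is sharply $F$-pure on $Y$. To finish, I would invoke Lemma \ref{lem.FPurePlusEpsilonImpliesFReg} with $\Gamma := \epsilon f^* \Gamma_0$ on $Y$. The hypothesis needed is that $\Supp(\Gamma)$ contains the locus where $(S, f^*\Delta)$ fails to be strongly $F$-regular. Since $f$ is flat with regular fibers, $\Sing(Y) = f^{-1}(\Sing(X))$ and $\Supp(f^*\Delta) = f^{-1}(\Supp(\Delta))$, so
\[
\Supp(f^* \Gamma_0) \supseteq f^{-1}\bigl(\Sing(X) \cup \Supp(\Delta)\bigr) \supseteq \Sing(Y) \cup \Supp(f^*\Delta),
\]
and the right-hand side certainly contains the non-strongly-$F$-regular locus of $(S, f^*\Delta)$.

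The main step that deserves care is the support computation in the last paragraph, specifically the assertion that flatness with regular fibers transfers the singular locus by $f^{-1}$ and that divisor pullback respects supports. Both are standard (the regular locus is preserved in both directions by a flat morphism with regular fibers, and effective flat pullback of a divisor has the expected support), but they are the load-bearing observation that makes the $\Gamma_0$-twisting trick work. Everything else is a routine assembly of Lemmas \ref{lem.KeyA1PLemma}, \ref{lem.FPurePlusEpsilonImpliesFReg}, and \ref{lem.BasicPropertiesOfFpurityAndRegularity}(a).
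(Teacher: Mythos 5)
Your proof is correct and takes essentially the same approach as the paper: the sharp $F$-pure case is dispatched by Lemma \ref{lem.KeyA1PLemma}, and strong $F$-regularity is handled by perturbing $\Delta$ by a small multiple of a (Cartier) divisor supported on the singular and divisorial loci, pulling back the resulting sharply $F$-pure pair, and then invoking Lemma \ref{lem.FPurePlusEpsilonImpliesFReg}. Your explicit support/singular-locus computation under $f$ plays exactly the role of the paper's citation of Matsumura's Theorem 23.7.
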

\begin{proof}
By Lemma \ref{lem.KeyA1PLemma}, we already have shown that sharply $F$-pure pairs satisfy (A1P).  Choose $\Gamma$ a Cartier divisor on $\Spec R$ containing the support of $\Delta$ and such that $(\Spec R) \setminus \Supp(\Gamma)$ is regular.  We may choose $\varepsilon > 0$ such that $(R, \Delta + \varepsilon \Gamma)$ is sharply $F$-pure and thus $(S, f^* \Delta + \varepsilon f^* \Gamma)$ is also sharply $F$-pure.  Now, since the fibers of $R \to S$ are regular, $\Spec S \setminus f^* \Gamma$ is also regular by \cite[Theorem 23.7]{MatsumuraCommutativeRingTheory}.  Thus $(S, f^* \Delta)$ is strongly $F$-regular by Lemma \ref{lem.FPurePlusEpsilonImpliesFReg}.
\end{proof}

\begin{remark}
It would be natural to try to generalize the proof of Lemma \ref{lem.KeyA1PLemma} to the context of $F$-rational or $F$-injective singularities.  In particular, it is natural to try to use local cohomology modules of the rings instead of injective hulls of residue fields.  The problem is that the socle need not to be 1-dimensional.
\end{remark}

We also give a proof that (A1) holds for $F$-finite $F$-pure rings.  This was known to experts and indeed follows directly from the same argument that was used to prove \cite[Theorem 7.3]{HochsterHunekeSmoothBaseChange}.  We include the following argument for completeness however.

\begin{proposition} [(Theorem 7.3 in \cite{HochsterHunekeSmoothBaseChange})]
\label{prop.A1ForFPureNonPairs}
Let $R$ and $S$ be rings.  Assume that $f : \Spec(S) \to \Spec(R)$ is a flat morphism with regular fibers and that $R$ is $F$-pure.  Then $S$ is $F$-pure as well.  In particular, (A1) holds for $\sP = $``$F$-purity''.
\end{proposition}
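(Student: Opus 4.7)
The plan is to imitate the argument of Lemma \ref{lem.KeyA1PLemma} in the special case $\Delta = 0$, with the injective hull $E_R$ of the residue field playing the role that the top local cohomology of the canonical module played there; this substitution bypasses the normality (and Cohen--Macaulayness) assumption that was implicit in the canonical-module approach. Since $F$-purity is local, I would first reduce to a faithfully flat local homomorphism $f \colon (R, \bm, K) \to (S, \bn, L)$ of $F$-finite local rings with regular closed fibre. By the paper's own definition of $F$-purity, $R$ is $F$-pure if and only if the map $\varphi_R \colon E_R \to F_*R \tensor_R E_R$ obtained by tensoring $R \hookrightarrow F_*R$ with $E_R$ is injective, and the analogous injectivity for $S$ is our goal.

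Choose $y_1, \dots, y_t \in \bn$ lifting a regular system of parameters for $S/\bm S$; by Theorem \ref{injective hull under flat local base change}(ii),(iv) these form a regular sequence on $S$ and yield the key identification $E_S \cong E_R \tensor_R H^t_{\langle y_1, \dots, y_t \rangle}(S)$. Using this identification I would assemble the commutative square
\[
\xymatrix@C=3em{
E_R \ar[r]^-{\varphi_R} \ar[d]_-{\alpha} & F_*R \tensor_R E_R \ar[d]^-{\beta} \\
E_S \ar[r]^-{\varphi_S} & F_*S \tensor_S E_S
}
\]
where $\alpha$ sends $z \mapsto z \tensor \tfrac{1}{y_1 \cdots y_t}$ and $\beta$ is the analogous natural map induced by the tensor-product description on both sides. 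Commutativity is a bookkeeping computation hinging on the explicit formula $\tfrac{1}{y_1 \cdots y_t} \mapsto \tfrac{1}{y_1^p \cdots y_t^p}$ for the Frobenius action on $H^t_{\langle y \rangle}(S)$, and proceeds exactly as in Lemma \ref{lem.KeyA1PLemma}.

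By Theorem \ref{injective hull under flat local base change}(iii), $\alpha$ carries a socle generator of $E_R$ onto a generator of the one-dimensional socle of $E_S$. The hypothesis that $\varphi_R$ is injective then forces $\varphi_S \circ \alpha = \beta \circ \varphi_R$ to be non-zero on the socle of $E_R$, so $\varphi_S$ is non-zero on the one-dimensional socle of $E_S$; since $E_S$ is an essential extension of its socle, $\varphi_S$ is injective, which is exactly $F$-purity of $S$. The only step requiring genuine care is the commutativity of the square, a direct Frobenius chase identical in spirit to the one carried out in Lemma \ref{lem.KeyA1PLemma}; everything else is a straightforward specialization of that argument to the boundary-free setting.
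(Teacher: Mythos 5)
Your proof is correct and follows the same strategy as the paper's own argument, which the paper itself describes as ``similar to the one above'' (i.e., to Lemma \ref{lem.KeyA1PLemma}): reduce to the local case, use Theorem \ref{injective hull under flat local base change}(iv) to identify $E_S \cong E_R \tensor_R H^t_{\langle y_1,\dots,y_t\rangle}(S)$, set up a commutative square, and invoke Theorem \ref{injective hull under flat local base change}(iii) to see that the left-hand vertical map carries a socle generator of $E_R$ to one of $E_S$. Your observation that the $E_R$-based formulation bypasses the normality baked into the canonical-module argument is exactly the reason the paper gives this proof separately, so that is the right intuition.

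Two small remarks on the bookkeeping you deferred. First, your right-hand column uses the canonical target $F_*R \tensor_R E_R$ (resp.\ $F_*S \tensor_S E_S$), matching the paper's definition of $F$-purity literally; the paper's version of the diagram instead uses $F^e_*E_R$ and $F^e_*E_R \tensor_{F^e_*R} H^t_{\langle \mathbf{y}\rangle}(F^e_*S) \cong F^e_*E_S$, with the right vertical map $z \mapsto z \tensor \tfrac{1}{y_1^{p^e}\cdots y_t^{p^e}}$. Both formulations work and are Matlis-dual to the splitting picture, but they are not literally the same square, so the precise definition of your $\beta$ (and the commutativity check) is slightly different from the one in Lemma \ref{lem.KeyA1PLemma}; it is worth writing it out. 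Second, when you assert that ``the hypothesis that $\varphi_R$ is injective forces $\beta\circ\varphi_R$ to be non-zero on the socle,'' you are implicitly using that $\beta$ does not kill $\varphi_R$ of a socle generator; in the paper this is what the claim that ``these vertical maps are injective by the same argument as above'' is for, and in Lemma \ref{lem.KeyA1PLemma} it is justified by composing the right vertical map with multiplication by $y_1^{p^e-1}\cdots y_t^{p^e-1}$ to recover the Frobenius pushforward of the left one. That verification should be adapted to your chosen $\beta$ rather than simply cited. With those two points filled in, the argument is complete and agrees with the paper's.
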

\begin{proof}
The proof is similar to the one above.  In particular we may assume that $(R, \bm)$ and $(S, \bn)$ are local and that $f$ is a local morphism.  By assumption $(S/\bm, \bn/\bm)$ is regular and local and so there exists a regular sequence ${\bf{y}} = y_1, \dots, y_t \in S$ whose images in $S/\bm$ form a regular system of parameters.  Since $R$ is $F$-pure, there exists an injective map $E_R \to F^e_* E_R$ (in the $F$-finite case, this is the dual to a Frobenius splitting, in the general case see \cite[Theorem 3.2]{SharpAnExcellentFPureRingHasABigTCElement}).  Consider now the following diagram
\[
\xymatrix{
E_R \ar[d]  \ar@{^{(}->}[r] & F^e_* E_R \ar[d] \\
E_R \tensor_R H^t_{\langle {\bf{y}} \rangle}(S) \ar[r]_-{\phi} & F^e_* E_R \tensor_{F^e_* R} H^t_{\langle {\bf{y}} \rangle}(F^e_* S)
}
\]
where the vertical arrows are $z \mapsto z \tensor {1 \over y_1 \dots y_t}$ and $z \mapsto z \tensor {1 \over y_1^{p^e} \dots y_t^{p^e}}$ respectively.  These vertical maps are injective by the same argument as above.  Since $E_R \tensor_R H^t_{\langle {\bf{y}} \rangle}(S) \cong E_S$, it is sufficient to show that the bottom horizontal map is injective.  Arguing as in the previous case, we see that if $z \in E_R$ is a socle generator of $E_R$, then its image $z' \in E_S$ also generates the $S$-socle.  But then $\phi(z') \neq 0$ from the commutative diagram.
\end{proof}

\section{\textnormal{(A2)} for $F$-singularities}

In this section we prove the (A2) property for $F$-injective and $F$-pure singularities.  We also prove the (A2P) property for sharply $F$-pure pairs.  As far as we are aware, this property has only previously been explored in the special case of Gorenstein $F$-pure singularities in \cite[Theorem 4.4]{ShimomotoZhang.OnTheLocalizationProblemForFPure}.

We begin by proving that (A2P) holds for sharply $F$-pure pairs.

\begin{proposition}
\label{prop.A2PHoldsForFPurity}
The condition \textnormal{(A2P)} holds for $\sP$ = sharp $F$-purity for pairs $(R, \Delta)$ such that there exists $e > 0$ with $(p^e - 1)\Delta$ an integral Weil divisor.
\end{proposition}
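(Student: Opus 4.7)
The plan is to reduce to the affine setting, reformulate sharp $F$-purity as surjectivity of an evaluation map via Remark \ref{rem.BasicPropertiesOfFPurePairs}(i), and spread the hypothesis from the generic fiber to a Zariski neighborhood of $\eta$ using generic freeness (Theorem \ref{thm.GenFreeness}). Passing to an affine neighborhood of $\eta$, we will assume $Z = \Spec A$ for an $F$-finite Noetherian domain $A$ with fraction field $K$, and $Y = \Spec B$ for a finitely generated (hence $F$-finite) $A$-algebra $B$. Let $D := (p^e - 1)\Delta$ be the hypothesized integral Weil divisor and $\sL := \O_Y(D)$. Then sharp $F$-purity of $(B, \Delta)$ amounts to surjectivity of the evaluation-at-$1$ map $\mathrm{ev}_1 : M \to B$, where $M := \Hom_B(F^e_* \sL, B)$, with the analogous characterization at each fiber after base change.

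The first key step will be to exploit the hypothesis at $\eta$ (applied with the trivial extension $K' = K$) to conclude that $C := \coker(\mathrm{ev}_1)$ vanishes generically over $A$. For this I plan to use the standard base-change compatibility of $\Hom$ with flat localization for finitely presented modules, together with the identification $F^e_* B(D) \otimes_A K \cong F^e_* B_\eta(D_\eta)$ --- valid in the $F$-finite setting since $F^e_* B = B^{1/p^e}$ commutes with the generic localization $A \to K$ --- and the lemma on restriction of rank-$1$ reflexive sheaves to fibers from the end of Section 2 applied to $\sL$. These should combine to give a natural isomorphism $M \otimes_A K \cong \Hom_{B_\eta}(F^e_* B_\eta(D_\eta), B_\eta)$ compatible with evaluation at $1$, whence $C \otimes_A K = 0$. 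Applying Theorem \ref{thm.GenFreeness} to the finitely generated $B$-module $C$ then produces some $a \in A \setminus \{0\}$ with $C[a^{-1}]$ free over $A[a^{-1}]$; the generic vanishing forces $C = 0$ on $U := \Spec A[a^{-1}]$, so $\mathrm{ev}_1$ will be surjective on $U$.

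To propagate surjectivity to each fiber, for any $s \in U$ and any finite extension $K'/k(s)$, tensoring $\mathrm{ev}_1$ with $K'$ over $A$ preserves surjectivity. A second application of the reflexive base-change identification should yield $M \otimes_A K' \cong \Hom_{B \otimes_A K'}(F^e_*(B \otimes_A K')(D \otimes K'), B \otimes_A K')$ compatible with $\mathrm{ev}_1$, from which surjectivity of the fiber's own evaluation map follows, establishing sharp $F$-purity of $(B \otimes_A K', D \otimes K')$. Since this holds for every finite $K'/k(s)$, each fiber $(Y_s, \Delta_s)$ will be geometrically sharply $F$-pure, as required. The main obstacle will be verifying this base-change identification for non-generic fibers: the Frobenius pushforward $F^e_* B \otimes_A k(s)$ differs from $F^e_*(B \otimes_A k(s))$ by a nilpotent ideal when $k(s)$ is not perfect, so it seems necessary to first shrink $U$ so that all fibers are geometrically reduced (a property inherited from the reducedness of $B_\eta$ guaranteed by sharp $F$-purity) and then argue that the identification descends through this reduction, via an argument similar in spirit to the proof of the rank-$1$ reflexive restriction lemma in Section 2.
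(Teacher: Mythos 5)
Your overall strategy (reformulate sharp $F$-purity as surjectivity of the evaluation map $\mathrm{ev}_1 : \Hom_B(F^e_*\sL, B) \to B$, use generic freeness on the cokernel to spread vanishing from $\eta$ to an open $U$, then base change to each fiber) is genuinely different from the paper's, and the opening steps are sound: since $A \to \Frac(A)$ is a localization, Frobenius pushforward does commute with it, so the identification at $\eta$ and the conclusion $C = 0$ on a dense open $U$ both go through. The difficulty is exactly where you flag it, but your diagnosis of the obstacle is off and your proposed fix does not work. The discrepancy between $F^e_*B \otimes_A k(s)$ and $F^e_*(B \otimes_A k(s))$ is \emph{not} a matter of imperfect residue fields nor of non-reduced fibers: already for $A = B = \bF_p[t]$ and $s = (t)$, the natural map $F^e_*B \otimes_A k(s) \cong \bF_p[t^{1/p^e}]/(t) \to F^e_*(B \otimes_A k(s)) \cong \bF_p$ has a non-trivial nilpotent kernel even though $k(s)=\bF_p$ is perfect and the fiber is regular. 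For imperfect $k(s)$ the map can moreover fail to be surjective. Passing to geometrically reduced fibers does not help, because the resulting surjection $\Hom_B(F^e_*\sL, B)\otimes_A k(s) \cong \Hom_{B_{k(s)}}(F^e_*\sL \otimes_A k(s), B_{k(s)}) \onto B_{k(s)}$ lives on the \emph{wrong} module: the module you need, $\Hom_{B_{k(s)}}(F^e_*(\sL_{k(s)}), B_{k(s)})$, is only a \emph{submodule} of the one you control (maps vanishing on the nilpotent kernel), and the $B_{k(s)}$-linearity of $\mathrm{ev}_1$ gives no reason for a given surjection to kill the nilpotent submodule, since that submodule is generated over the larger ring $F^e_*B\otimes_A k(s)$.

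The missing idea, which is the crux of the paper's argument, is to sidestep the base change of absolute Frobenius entirely by working with the \emph{relative} Frobenius (Radu--Andr\'e) map $R\otimes_B F^e_*B \to F^e_*R$. The paper spreads out a \emph{splitting} $\psi' : F^e_*(R \otimes_B B[u^{-1}]) \to R\otimes_B F^e_* B[u^{-1}]$ factoring through $F^e_*(R((p^e-1)\Delta)\otimes_B B[u^{-1}])$; this is a map of $F^e_*B[u^{-1}]$-modules, and tensoring it by $\otimes_{F^e_*B} F^e_*L$ behaves perfectly, since $F^e_*(R\otimes_B -)\otimes_{F^e_*B} F^e_*L = F^e_*(R\otimes_B L)$ on the nose (localizing and base changing happen on the $F^e_*$ side of the fence). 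This yields a surjection onto $R\otimes_B F^e_*L$, and one finishes by composing with any nonzero $F^e_*L \to L$, which exists because $L$ is $F$-finite. Without this relative-Frobenius device (or something equivalent to it) your final step cannot be completed, so I would count this as a genuine gap rather than a rough edge.
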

\begin{proof}
The statement is local so suppose that $Z = \Spec B$ and $Y = \Spec R$.  By inverting an element of $B$, we may also assume that $B$ is regular and that $\Hom_B(F^e_* B, B)$ is free of rank $1$ as a $B$-module.  We may also assume that $B \to R$ is flat.  Set $\eta$ to be the generic point of $Z$.

We notice that (A2) holds for normality by \cite[Corollaire 9.9.5]{EGAIV3}.  Recall that for us, if $\Delta \neq 0$, then sharp $F$-purity implies normality, see Remark \ref{rem.SharpFPurePairsAreNormal}.   In particular, if $\Delta_{\eta} \neq 0$, we may assume that since the generic fiber\footnote{The fiber over the generic point.} pair is geometrically sharply $F$-pure, it is geometrically normal as well.  It follows that if $\Delta_{\eta} \neq 0$, by shrinking $Z = \Spec B$ if necessary, we may assume all the fibers are geometrically normal.

We fix the $e > 0$ that was given to us by hypothesis.
Now, because $(R \otimes_B B_{\eta}, \Delta_{\eta})$ is geometrically sharply $F$-pure, we know that the composition
\[
 (R \otimes_B F^e_* B_{\eta}) \to F^e_* (R \otimes_B F^e_* B_{\eta}) \to F^e_*(R ((p^e - 1)\Delta) \otimes_B F^e_* B_{\eta})
\]
splits by Remark \ref{rem.BasicPropertiesOfFPurePairs}(iii).  But this composition can also be factored as
\[
\begin{array}{rl}
    & (R \otimes_B F^e_* B_{\eta}) \\
\to & F^e_*(R \otimes_B B_{\eta}) \\
\to & F^e_*(R ((p^e - 1)\Delta) \otimes_B B_{\eta}) \\
\to & F^e_*(R ((p^e - 1)\Delta) \otimes_B F^e_* B_{\eta})
\end{array}
\]
and so
\[
(R \otimes_B F^e_* B_{\eta}) \to F^e_*(R ((p^e - 1)\Delta) \otimes_B B_{\eta})
\]
also splits.  Therefore, there exists a surjective map $\psi : F^e_* (R \otimes_B B_{\eta}) \to (R \otimes_B F^e_* B_{\eta})$ which factors through $F^e_*(R ((p^e - 1)\Delta) \otimes_B B_{\eta})$.

It immediately follows that there exists an element $u \in B \setminus \{ 0 \}$ such that $\psi$ is the base change with $\tensor_{B[u^{-1}]} B_{\eta}$ of a map $\psi'$
\[
F^e_* (R \otimes_B B[u^{-1}]) \to F^e_*(R ((p^e - 1)\Delta) \otimes_B B[u^{-1}]) \to R \otimes_B (F^e_* B[u^{-1}]).
\]
We may assume that $\psi'$ is also surjective.  Set $U = \Spec B[u^{-1}] \subseteq \Spec B = Z$.  For any $s \in U$ set $L \supseteq k(s)$ to be a finite extension of the residue field and tensor $\psi'$ by $\otimes_{F^e_* B} F^e_* L$ which yields:
\begin{equation}
\label{eq.RelativeFrobeniusSurj}
F^e_* (R \otimes_B L) \to F^e_*(R ((p^e - 1)\Delta) \otimes_B L ) \to R \otimes_B (F^e_* L ).
\end{equation}
which is also surjective by the right-exactness of tensor.  Choose any nonzero (and thus surjective map) $F^e_* L \to L$.  Tensoring with $R$ produces a surjective map $R \otimes_B (F^e_* L) \to R \otimes_B L$.  Finally, composing with \eqref{eq.RelativeFrobeniusSurj} yields a surjection $F^e_* (R \otimes_B L) \to R \otimes_B L$, which completes the proof.
\end{proof}

\begin{corollary}
\label{cor.A2HoldsForFPureRings}
\textnormal{(A2)} holds for $F$-finite $F$-pure rings (even in the not necessarily normal case).
\end{corollary}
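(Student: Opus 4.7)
The plan is to observe that this is essentially the $\Delta = 0$ case of (the argument in) Proposition \ref{prop.A2PHoldsForFPurity}, so I would re-run that proof with $\Delta = 0$ and check that the normality hypothesis is never essential. The only appeal to normality in the proof of Proposition \ref{prop.A2PHoldsForFPurity} was via Remark \ref{rem.SharpFPurePairsAreNormal}, where geometric sharp $F$-purity of the generic fiber (when $\Delta \neq 0$) was used to force geometric normality of the generic fiber, and then of all nearby fibers after shrinking $Z$.  For ordinary $F$-purity (\ie $\Delta = 0$) the definition is stated without any normality hypothesis, so that step is simply unnecessary.

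More concretely, I would proceed as follows. The assertion is local on the base, so reduce to $Z = \Spec B$ and $Y = \Spec R$ affine, with $B \to R$ a finite type map of $F$-finite rings and $B$ an integral domain with generic point $\eta$.  After inverting a suitable element of $B$ (invoking Theorem \ref{thm.GenFreeness}), I may assume that $B$ is regular, that $B \to R$ is flat, and that $\Hom_B(F^e_* B, B)$ is free of rank one over $B$.  Because $R \otimes_B B_{\eta}$ is (even geometrically) $F$-pure, Remark \ref{rem.BasicPropertiesOfFPurePairs}(iii) specialized to $\Delta = 0$ yields a splitting of the Frobenius map $R \otimes_B B_{\eta} \to F^e_*(R \otimes_B B_{\eta})$.

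Spreading this splitting out by generic freeness produces a nonzero $u \in B$ and a surjection
\[
\psi : F^e_*(R \otimes_B B[u^{-1}]) \to R \otimes_B F^e_* B[u^{-1}]
\]
whose base change to $B_{\eta}$ recovers the original splitting.  Setting $U = \Spec B[u^{-1}]$, for every $s \in U$ and every finite extension $L \supseteq k(s)$, tensoring $\psi$ over $F^e_* B[u^{-1}]$ with $F^e_* L$ and then composing with any nonzero map $F^e_* L \to L$ produces a surjection $F^e_*(R \otimes_B L) \to R \otimes_B L$, showing that $R \otimes_B L$ is $F$-pure.  Hence $Y_s$ is geometrically $F$-pure for every $s \in U$, which is exactly \textnormal{(A2)} for $F$-purity.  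The one real subtlety — and thus the ``main obstacle'' — is the bookkeeping check that every invocation of normality in the pair proof truly came only from Remark \ref{rem.SharpFPurePairsAreNormal}; once that observation is in hand, the argument is a direct copy of the pair version.
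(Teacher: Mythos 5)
Your proposal is correct and matches the paper's approach exactly: the paper's own proof of Corollary \ref{cor.A2HoldsForFPureRings} simply states that it ``follows by exactly the same argument as in Proposition \ref{prop.A2PHoldsForFPurity},'' and your account spells out precisely that re-run with $\Delta = 0$, correctly observing that the only invocation of normality in the pair case was through Remark \ref{rem.SharpFPurePairsAreNormal}, which is moot when $\Delta = 0$. The one small imprecision is describing the generic-fiber input as a splitting of $R \otimes_B B_\eta \to F^e_*(R \otimes_B B_\eta)$; what is actually used (and what spreads out to $\psi$) is the surjection $F^e_*(R \otimes_B B_\eta) \to R \otimes_B F^e_* B_\eta$ obtained by refactoring the Frobenius of the finite base change $R \otimes_B F^e_* B_\eta$ — which is exactly where the geometric hypothesis enters — but your subsequent displayed $\psi$ has the correct target, so this is cosmetic rather than a gap.
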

\begin{proof}
This follows by exactly the same argument as in Proposition \ref{prop.A2PHoldsForFPurity}.
\end{proof}


Now we move on to (A2) for $F$-injective singularities.

\begin{proposition}
The condition \textnormal{(A2)} holds for $F$-injective singularities.
\end{proposition}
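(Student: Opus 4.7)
The plan is to mimic the proof of Proposition \ref{prop.A2PHoldsForFPurity}, replacing the evaluation-at-$1$ splitting by the dual characterization of $F$-injectivity: $R$ is $F$-injective if and only if for every $i$ the Grothendieck-trace map $\myH^i(\Tr^e) \colon \myH^i(F^e_* \omega_R^{\mydot}) \to \myH^i(\omega_R^{\mydot})$ is surjective (for some, equivalently every, $e > 0$). First I would reduce to the affine situation $Z = \Spec B$ with $B$ an $F$-finite Noetherian integral domain, and $Y = \Spec R$ with $R$ a finite-type $F$-finite $B$-algebra. By generic freeness (Theorem \ref{thm.GenFreeness}) together with passage to the regular locus of $Z$, shrink $Z$ so that $R$ is $B$-flat and $B$ is regular; in particular $\omega_B^{\mydot}$ is concentrated in a single degree.

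Fix $e > 0$ and set $C^i_e := \coker \myH^i(\Tr^e)$, a finitely generated $R$-module. Applying generic freeness to each of $C^i_e$, $\myH^i(\omega_R^{\mydot})$, and $\myH^i(F^e_* \omega_R^{\mydot})$, I would shrink $B$ further so that all these modules become $B$-free. The regularity of $B$, flatness of $R$ over $B$, and $B$-freeness of the dualizing-complex cohomology modules should then yield a base-change identification
\[
C^i_e \otimes_B k(s) \;\cong\; \coker \myH^i(\Tr^e_{R_s})
\]
for each $s \in \Spec B$, up to a uniform cohomological shift determined by $\omega_B^{\mydot}$. Since $R_\eta$ is in particular $F$-injective, $C^i_e \otimes_B k(\eta) = 0$; $B$-freeness then forces $C^i_e = 0$, so every fiber $R_s$ in our open set is $F$-injective.

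To upgrade to \emph{geometrically} $F$-injective fibers, I would run the same argument with $B \to R$ replaced by $B^{1/p^n} \to R \otimes_B B^{1/p^n}$, which remains in the $F$-finite category because $B$ is $F$-finite. The morphism $\Spec B^{1/p^n} \to \Spec B$ is a universal homeomorphism whose residue-field extensions are $k(s)^{1/p^n}/k(s)$, so the previous step produces an open set on which $R_s \otimes_{k(s)} k(s)^{1/p^n}$ is $F$-injective. Combined with the fact that separable finite extensions of residue fields are \'etale and automatically preserve $F$-injectivity, this handles arbitrary finite extensions $L/k(s)$.

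The primary obstacles are, first, justifying the base-change identification of the second paragraph, which requires care in tracking the Grothendieck trace through the compatibility of dualizing complexes along the flat map $B \to R$; this is standard once we have $B$ regular, $R$ flat over $B$, and $B$-freeness of the cohomology modules, but the bookkeeping of cohomological shifts deserves attention. The subtler point is producing a \emph{single} open neighborhood of $\eta$ that works for all purely-inseparable exponents $n$ simultaneously, since \emph{a priori} the opens $U_n$ depend on $n$; I would expect this to require either a stabilization argument (showing that the $U_n$ eventually agree) or a single base change that uniformly encodes all the purely-inseparable test extensions at once.
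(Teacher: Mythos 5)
The proposal has a genuine gap in the key base-change identification, and it is precisely the gap the paper's argument is designed to avoid.

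Your plan hinges on the claim that
\[
C^i_e \otimes_B k(s) \;\cong\; \coker \myH^i(\Tr^e_{R_s}),
\]
but this fails because the \emph{absolute} Frobenius does not commute with base change over $B$. Since the $B$-action on $F^e_*\omega_R^{\mydot}$ is twisted by Frobenius, one has
\[
\bigl(F^e_*\omega_R^{\mydot}\bigr)\tensor_B k(s) \;\cong\; F^e_*\bigl(\omega_R^{\mydot}\tensor_B B/s^{[p^e]}\bigr),
\]
where $s^{[p^e]}$ is the ideal generated by $p^e$-th powers, and this is \emph{not} $F^e_*\omega_{R_s}^{\mydot}= F^e_*\bigl(\omega_R^{\mydot}\tensor_B B/s\bigr)$. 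Moreover, the actual trace $\Tr^e_{R_s}$ for the fiber factors through the base-changed map via an \emph{injection} (multiplication by $(p^e-1)$-th powers of the regular sequence generating $s$), not via the natural surjection. One can see this already for $B=k[t]$, $R=B[x]$, $s=(t)$: the reduction of the Cartier operator $\Psi_R$ mod $t$ sends $t^{a/p}x^{b/p}\mapsto\delta_{a,p-1}\delta_{b,p-1}$, whereas the fiber Cartier operator $\Psi_{R_s}$ composed with the natural quotient $q$ sends $t^{a/p}x^{b/p}\mapsto\delta_{a,0}\delta_{b,p-1}$; these are different maps, and the image of $\Psi_{R_s}$ is a priori only \emph{contained in} the image of $\Psi_R\bmod t$. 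Consequently $C^i_e=0$ (surjectivity of the base-changed trace, whose source is larger) does not imply surjectivity of $\Tr^e_{R_s}$ — the implication runs in the wrong direction. What you end up proving is that $R[u^{-1}]$ itself is $F$-injective, which is a statement about $R$ and not about its fibers; passing from there to $F$-injectivity of $R_s=R/sR$ would itself be a Bertini-type statement, which is the very thing the axiom is a tool to establish.

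The paper's proof circumvents this by replacing the absolute Frobenius trace with the trace of the \emph{relative} Frobenius (Radu–Andr\'e map) $R\tensor_B F^e_*B\to F^e_*R$, and base-changing over $F^e_*B$ rather than over $B$. This map does commute with base change — this is exactly the content of the paper's Claim, proved by an explicit regular-sequence computation — and the geometrically-$F$-injective hypothesis on $R_\eta$ is used precisely to obtain the needed surjectivity of $\myH^i(\omega^{\mydot}_{F^e_*R_\eta})\to\myH^i(\omega^{\mydot}_{R\tensor_B F^e_*B_\eta})$. After spreading out, one base-changes to $F^e_*L$ for an arbitrary finite extension $L\supseteq k(s)$ and composes with the split surjection $\myH^i(\omega^{\mydot}_{R\tensor_B F^e_*L})\to\myH^i(\omega^{\mydot}_{R\tensor_B L})$; this recovers the trace of $R_L$ and handles all finite residue-field extensions with a \emph{single} open set $U$. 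Your proposed $B^{1/p^n}$ bootstrap for geometric $F$-injectivity, aside from the acknowledged but unresolved issue that the opens $U_n$ may depend on $n$, would also run into the same absolute-vs-relative Frobenius problem at each stage. The fix is not a stabilization argument but rather to set up the whole computation with the relative Frobenius from the start, as the paper does.
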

\begin{proof}
Again we assume that $Y = \Spec R$, $Z = \Spec B$ and $B \subseteq R$ is flat with $B$ a regular domain.  Consider the chain of finite maps:
\[
R \tensor_B F^e_* B \to F^e_* (R \tensor_B B) = F^e_* R \to F^e_* (R \tensor_B F^e_* B).
\]
Since $R \otimes_B B_{\eta}$ is geometrically $F$-injective (over $B_{\eta}$), the induced maps
\[
\myH^i\Big(\omega^{\mydot}_{F^e_* (R \tensor_B F^e_* B_{\eta})}\Big) \to \myH^i\Big(\omega^{\mydot}_{R \tensor_B F^e_* B_{\eta}}\Big)
\]
are surjective for each $i \in \bZ$.  It follows that the map
\[
\myH^i\Big(\omega^{\mydot}_{F^e_* R_{\eta} }\Big) \to \myH^i\Big(\omega^{\mydot}_{R \tensor_B F^e_* B_{\eta}}\Big)
\]
is also surjective for every $i$.  Choose $u \in B$ such that
\[
\myH^i\Big(\omega^{\mydot}_{F^e_* R \otimes_B B[u^{-1}] }\Big) \to \myH^i\Big(\omega^{\mydot}_{R \tensor_B F^e_* B[u^{-1}]}\Big)
\]
is also surjective.  Additionally, by using generic freeness, Theorem \ref{thm.GenFreeness},  we may assume that each $\myH^i\Big(\omega^{\mydot}_{F^e_* R \tensor_B B[u^{-1}] }\Big) \cong \myH^i\Big(\omega^{\mydot}_{F^e_* R \tensor_{F^e_* B}  F^e_* B[u^{-1}] }\Big)$ and each $\myH^i\Big(\omega^{\mydot}_{R \tensor_B F^e_* B[u^{-1}]}\Big)$  is $F^e_* B[u^{-1}]$-free (note that $F^e_* R \tensor_B B[u^{-1}] \cong F^e_* R \tensor_{F^e_* B}  F^e_* B[u^{-1}]$ since inverting $u$ gives the same module as inverting $u$ to a power).  Choose a point $s \in U = \Spec B[u^{-1}]$ and a finite extension $L \supseteq k(s)$.  Set $R_L = R \tensor_B L$.  It is now sufficient to show that $\myH^i( F^e_* \omega_{R_L}^{\mydot}) \to \myH^i(\omega_{R_L}^{\mydot})$ is surjective.  To this end, we now replace $B$ by $B_s$ and $R$ by $R \tensor_B B_s$.  We use $s$ to denote the unique maximal ideal of $B$.

Since $R$ is a finitely generated $B$-algebra, we may find a polynomial ring $A=B[y_1,\dots,y_t]$ with a surjection $A\twoheadrightarrow R = A/I$. Since $B$ is regular, so is $A$. Hence,
\[\myH^i\Big(\omega^{\mydot}_{F^e_* R} \Big) \cong \Ext^{i+\dim B + t}_A(F^e_*R,A)\ {\rm and\ }\myH^i\Big(\omega^{\mydot}_{R \tensor_B F^e_* B}\Big) \cong \Ext^{i+\dim B + t}_A(R \tensor_B F^e_* B,A). \]
Thus we know that $\Ext^i_A(F^e_*R,A)$ and $\Ext^i_A(R \tensor_B F^e_* B,A)$ are free $F^e_* B$-modules for all $i$ (and so also free $B$-modules by \cite{KunzCharacterizationsOfRegularLocalRings}).

\begin{claim}
Set $d = \dim B = \dim B_s$.  Then we have a commutative diagram:
\[
\xymatrix{
\myH^i\Big(\omega^{\mydot}_{F^e_* R }\Big) \otimes_{F^e_* B} {F^e_* L} \ar@{<->}[r]^-{\sim} \ar[d] & \myH^{i+d}\Big(\omega^{\mydot}_{F^e_* (R \otimes_{ B} {L}) }\Big) \ar[d]\\ \myH^i\Big(\omega^{\mydot}_{R \tensor_B F^e_* B}\Big) \otimes_{F^e_* B} {F^e_* L} \ar@{<->}[r]_-{\sim} & \myH^{i+d}\Big(\omega^{\mydot}_{R \tensor_{B} F^e_* L}\Big).
}
\]
where the vertical maps are the Grothendieck trace maps.
\end{claim}

\begin{proof}[of claim]
We first want to establish the Claim when $L=k(s)$. Since $B = B_s$ is local, we see that $s$ is generated by a regular system of parameters. By induction on $d=\dim(B)$ (which is the number of generators of $s$), we may assume that $s$ is generated by $x$. Since $R$ is flat over $B$, the short exact sequence $0\to F^e_*B\xrightarrow{\cdot x}F^e_*B\to F^e_*k(s)\to 0$ induces a short exact sequence $0\to R\tensor_BF^e_*B \xrightarrow{\cdot x}R\tensor_B F^e_*B\to R\tensor_BF^e_*k(s)\to 0$ (here the multiplication by $x$ is actually by the corresponding element of $F^e_* B \cong B$, or if we identify $F^e_* B$ with $B^{1/p^e}$ this is multiplication by $x^{1/p^e}$). We have the following commutative diagram
\[
\xymatrix{
0 \ar[r] & F^e_*R \ar[r]^{\cdot x} & F^e_*R \ar[r] & F^e_*(R/\langle x \rangle) \ar[r] & 0\\
0 \ar[r ]& R\tensor_B F^e_*B \ar[r]^{\cdot x} \ar[u]^{\alpha} & R\tensor_B F^e_*B \ar[r] \ar[u]^{\alpha} & R\tensor_BF^e_*k(s) \ar[r] \ar[u]^{\beta}& 0
}
\]
where $\alpha: R\tensor_BF^e_*B\to F^e_*R$ is given by $r\tensor b\mapsto br^{p^e}$ (the Radu-Andr\'{e} map) and
\[
\beta:R\tensor_B F^e_*k(s)=R\tensor_BF^e_*(B/\langle x \rangle)\to F^e_*(R/\langle x \rangle)
 \]
 is defined as $r\tensor (b+(x))\mapsto br^{p^e}+(x)$. Applying $\Hom_A(-,A)$ to this diagram, we have the following commutative diagram
\[
\xymatrix{
0\ar[r]& \frac{\Ext^i_A(F^e_*R,A)}{x\Ext^i_A(F^e_*R,A)} \ar[r] \ar[d] & \Ext^{i+1}_A(F^e_*(R/\langle x \rangle),A) \ar[r] \ar[d] & 0\\
0 \ar[r] & \frac{\Ext^i_A(R\tensor_BF^e_*B,A)}{x\Ext^i_A(R\tensor_BF^e_*B,A)} \ar[r] & \Ext^{i+1}_A(R\tensor_BF^e_*k(s),A)\ar[r] & 0
}
\]
where in the above diagram, we have 0 at the right end in both rows since both $ \Ext^{i+1}_A(F^e_*(R/\langle x \rangle),A) $ and $\Ext^{i+1}_A(R\tensor_BF^e_*k(s),A)$ are free $B$-module and hence $x$ is regular on both modules. We have proved that $\omega^{\mydot}_{R {\otimes}_{B} {k(s)}}$ is quasi-isomorphic to $(\omega^{\mydot}_R) \stackrel{\bf{L}}{\otimes_B} k(s)[d]$.  In particular, we have proved the statement for $L = k(s)$.

For each finite extension $k(s)\subset L$, tensor the above diagram with $\tensor_{A} L[y_1, \dots, y_t]$ (which is flat over $k(s)[y_1, \dots, y_t]$).  The Claim follows for $L$.
\end{proof}

Now we return to the main proof.  It follows from the right exactness of tensor that
\[
\myH^i\Big(\omega^{\mydot}_{F^e_* (R \otimes_B L) }\Big) \to \myH^i\Big(\omega^{\mydot}_{R \tensor_B F^e_* L}\Big)
\]
is surjective for all $i$.  But
\[
\myH^i\Big(\omega^{\mydot}_{R \tensor_B F^e_* L}\Big) \to \myH^i\Big(\omega^{\mydot}_{R \tensor_B L}\Big)
\]
is clearly surjective (in fact split since $R \tensor_B L \to R \tensor_B F^e_* L$ is split).  Composing these surjective maps
proves the proposition for $F$-injectivity.

\end{proof}

\section{Statement of our main theorem and corollaries}

In this section we state our main results:

\begin{theorem}
\label{thm.MainTheorem}
Suppose that $X$ is a variety over an algebraically closed field $k$, let $\Delta \geq 0$ be a $\bQ$-divisor on $X$, and let $\phi:X\to \bP^n_k$ be a $k$-morphism with separably generated (not necessarily algebraic) residue field extensions. Suppose either
\begin{itemize}
\item[(i)]  $(X,\Delta)$ is strongly $F$-regular, or
\item[(ii)]  $(X,\Delta)$ is sharply $F$-pure, or
\item[(iii)]  $\Delta = 0$, $X$ is not necessarily normal, and $X$ is $F$-pure.
\end{itemize}
Then there exists a nonempty open subset $U$ of $(\bP^n_k)^{\vee}$ such that for each hyperplane $H\in U$,
\begin{itemize}
\item[(i)]  $\left(\phi^{-1}(H),\Delta|_{\phi^{-1}(H)}\right)$ is strongly $F$-regular.
\item[(ii)]  $\left(\phi^{-1}(H),\Delta|_{\phi^{-1}(H)}\right)$ is sharply $F$-pure.
\item[(iii)]  $\phi^{-1}(H)$ is $F$-pure.
\end{itemize}\end{theorem}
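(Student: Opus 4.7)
I would funnel each case into the axiomatic Bertini machinery of Section 3. Case (iii) is the most direct: the property ``$F$-pure'' on $F$-finite rings satisfies (A1) by Proposition \ref{prop.A1ForFPureNonPairs} and (A2) by Corollary \ref{cor.A2HoldsForFPureRings}, so Theorem \ref{thm.1FromCGM} applied to the morphism $\phi$ delivers the desired nonempty open $U \subseteq (\bP^n_k)^\vee$ immediately.

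For case (ii), I would first invoke Lemma \ref{lem.GlobalSharpFPureCanBeAssumedNice} to replace $\Delta$ by some $\Delta' \ge \Delta$ such that $(X, \Delta')$ remains sharply $F$-pure and $(p^e - 1)\Delta'$ is an integral Weil divisor for some $e > 0$; this is needed to meet the integrality hypothesis of Proposition \ref{prop.A2PHoldsForFPurity}. Sharp $F$-purity for pairs of this form then satisfies (A1P) by Corollary \ref{cor.A1PForFRegFPure} and (A2P) by Proposition \ref{prop.A2PHoldsForFPurity}, so Theorem \ref{thm.CGMForPairs} supplies an open $U$ with $(\phi^{-1}(H), \Delta'|_{\phi^{-1}(H)})$ sharply $F$-pure for every $H \in U$. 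Since $\lceil (p^e - 1)\Delta|_{\phi^{-1}(H)} \rceil \le \lceil (p^e - 1)\Delta'|_{\phi^{-1}(H)} \rceil$, any splitting of the map $F^e_* \O(\lceil (p^e-1)\Delta'|_{\phi^{-1}(H)}\rceil) \to \O$ factors through one for the smaller divisor, proving (ii) for $\Delta$ itself.

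For case (i), I would bootstrap case (ii) through Lemma \ref{lem.FPurePlusEpsilonImpliesFReg}. Pick an effective divisor $\Gamma$ on $X$ whose support contains $\Sing(X) \cup \Supp(\Delta)$, so that $X \setminus \Supp(\Gamma)$ is regular. By Lemma \ref{lem.BasicPropertiesOfFpurityAndRegularity}(a), for sufficiently small $\varepsilon > 0$ the pair $(X, \Delta + \varepsilon \Gamma)$ is strongly $F$-regular, hence sharply $F$-pure. Apply case (ii) to $(X, \Delta + \varepsilon \Gamma)$, and simultaneously apply Theorem \ref{thm.1FromCGM} with $\sP = $ ``regular'' to the open subvariety $X \setminus \Supp(\Gamma)$ (this is the classical smoothness Bertini, valid thanks to the separable residue extension hypothesis); then intersect the two resulting open subsets of $(\bP^n_k)^\vee$. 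For every $H$ in the intersection, $(\phi^{-1}(H), \Delta|_{\phi^{-1}(H)} + \varepsilon \Gamma|_{\phi^{-1}(H)})$ is sharply $F$-pure and $\phi^{-1}(H) \setminus \Supp(\Gamma|_{\phi^{-1}(H)})$ is regular. In particular $\Supp(\Gamma|_{\phi^{-1}(H)})$ contains the non-strongly-$F$-regular locus of $(\phi^{-1}(H), \Delta|_{\phi^{-1}(H)})$, so Lemma \ref{lem.FPurePlusEpsilonImpliesFReg} concludes that this pair is strongly $F$-regular.

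The main obstacle is bookkeeping rather than conceptual: one must check that $\Delta|_{\phi^{-1}(H)}$ and $\Gamma|_{\phi^{-1}(H)}$ are well-defined $\bQ$-divisors with the expected supports on a dense open subset of $(\bP^n_k)^\vee$, but the ``Divisors on fibers'' subsection handles this after shrinking $U$ once more. No further algebraic input beyond the axioms (A1), (A1P), (A2), (A2P) already verified should be required.
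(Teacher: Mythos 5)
Your proposal is correct and follows essentially the same route as the paper's own proof: cases (ii) and (iii) are fed into the axiomatic Bertini machinery via Lemma \ref{lem.GlobalSharpFPureCanBeAssumedNice}, Corollary \ref{cor.A1PForFRegFPure}, Proposition \ref{prop.A2PHoldsForFPurity}, and Theorems \ref{thm.1FromCGM}/\ref{thm.CGMForPairs}, while case (i) is bootstrapped from case (ii) through a well-chosen auxiliary divisor and Lemma \ref{lem.FPurePlusEpsilonImpliesFReg}. Your version usefully makes explicit two steps the paper leaves implicit — invoking classical Bertini for regularity on $X \setminus \Supp(\Gamma)$ so that the hypothesis of Lemma \ref{lem.FPurePlusEpsilonImpliesFReg} is met, and downgrading from $\Delta'$ back to $\Delta$ in case (ii) — but the underlying argument is identical.
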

\begin{proof}
In the case that $\Delta = 0$, the result for $F$-purity follows immediately from Proposition \ref{prop.A1ForFPureNonPairs}, Corollary \ref{cor.A2HoldsForFPureRings} and \cite{CuminoGrecoManaresiAxiomaticBertini} in the form of Theorem \ref{thm.1FromCGM} (again, note we never have to leave the $F$-finite setting).
We now handle the case for sharp $F$-purity.  It is clear that it is harmless to reduce to the case where $X$ is affine and then use Lemma \ref{lem.GlobalSharpFPureCanBeAssumedNice} to replace $\Delta$ by $\Delta' \geq \Delta$ such that $(p^e - 1) \Delta'$ is an integral Weil divisor.  Then the result for sharp $F$-purity is an immediate corollary of \cite{CuminoGrecoManaresiAxiomaticBertini} in the form of Theorem \ref{thm.CGMForPairs}, combined with Corollary \ref{cor.A1PForFRegFPure} and Proposition \ref{prop.A2PHoldsForFPurity}.

Now we move on to strong $F$-regularity.  Fix $D \geq 0$ a Weil divisor on $X$ such that $X \setminus D$ is non-singular and such that $\Supp(D) \supseteq \Supp(\Delta)$.  Fix $\varepsilon > 0$ such that $(X, \Delta + \varepsilon D)$ is strongly $F$-regular and so in particular sharply $F$-pure.  It follows that there exists an open subset $U \subseteq (\bP_k^n)^{\vee}$ such that $\left(\phi^{-1}(H),(\Delta + \varepsilon D)|_{\phi^{-1}(H)}\right)$ is sharply $F$-pure for all $H \in U$.  But then it follows that $\left(\phi^{-1}(H),\Delta|_{\phi^{-1}(H)}\right)$ is strongly $F$-regular by Lemma \ref{lem.FPurePlusEpsilonImpliesFReg}.
\end{proof}

\begin{remark}
\label{rem.ExtensionToTriples}
Indeed, it is easy to deduce the above result also for triples $(X, \Delta, \ba^t)$.  Let us briefly explain how:  by working on sufficiently small affine charts, the fact that $(X, \Delta, \ba^t)$ is sharply $F$-pure implies that $(X, \Delta + {1 \over p^e - 1} D)$ is also sharply $F$-pure for some Cartier divisor $D$ corresponding to a section of $\ba^{\lceil t(p^e - 1) \rceil}$ by \cite{SchwedeBetterFPureFRegular} (critically using the \emph{sufficiently small} affine charts).  Theorem \ref{thm.MainTheorem} then implies that the pair
\[
\left(\phi^{-1}(H), \Delta|_{\phi^{-1}(H)} + \left(\frac{1}{p^e - 1}\right) D|_{\phi^{-1}(H)}\right)
 \]
 is sharply $F$-pure.  But $\ba^{\lceil t(p^e - 1) \rceil}|_{\phi^{-1}(H)}$ certainly contains the section corresponding to $D|_{\phi^{-1}(H)}$ and so the proof is complete.  Using the same argument as in Theorem \ref{thm.MainTheorem}, one can also obtain the result for strongly $F$-regular triples.
\end{remark}

Since it is easy to see that sharp $F$-purity and strongly $F$-regularity are themselves open conditions (\ie, satisfy (A3P)), at least in the $F$-finite case, we obtain:

\begin{corollary}[Second Theorem of Bertini, Corollary 1 in \cite{CuminoGrecoManaresiAxiomaticBertini}]
\label{cor.BertiniForPairs}
Let $V$ be an algebraic variety over $k=\bar{k}$, and let $S$ be a finite dimensional linear system on $V$.  Further assume that $\Delta \geq 0$ is a $\bQ$-divisor\footnote{If $\Delta \neq 0$, then we assume that $V$ is normal.} on $V$. Assume that the rational map $V\dashrightarrow \mathbb{P}^n$ corresponding to $S$ induces (whenever defined) separably generated field extensions. Then for the general element $H$ of $S$, $(H, \Delta|_H)$ is also sharply $F$-pure (respectively, $(H, \Delta|_H)$ is strongly $F$-regular, respectively $\Delta = 0$ and $H$ is $F$-pure), except perhaps at the base points of $S$ and at the points of $(V, \Delta)$ which are not sharply $F$-pure (respectively, strongly $F$-regular, respectively $F$-pure).
\end{corollary}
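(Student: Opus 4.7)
The plan is to reduce this corollary directly to Theorem~\ref{thm.MainTheorem} by deleting the two exceptional loci from $V$ and then interpreting the resulting morphism in terms of the linear system $S$. First, let $B \subseteq V$ denote the base locus of $S$, and let $N \subseteq V$ denote the closed subset consisting of points at which $(V,\Delta)$ fails to be sharply $F$-pure (respectively, strongly $F$-regular, respectively at which $V$ fails to be $F$-pure). The set $N$ is genuinely closed because each of these conditions is open on schemes of finite type over an $F$-finite field, as noted just before the statement. Set $V^{\circ} := V \setminus (B \cup N)$; then $(V^{\circ}, \Delta|_{V^{\circ}})$ satisfies the corresponding hypothesis, and because $V^{\circ}$ is disjoint from $B$, the rational map $V \dashrightarrow \bP^n_k$ restricts to a genuine morphism $\phi \colon V^{\circ} \to \bP^n_k$. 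By hypothesis, $\phi$ has separably generated residue field extensions.

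Theorem~\ref{thm.MainTheorem} then applies to $\phi$ and yields a nonempty open subset $U \subseteq (\bP^n_k)^{\vee}$ such that for every $H \in U$, the pair $(\phi^{-1}(H), \Delta|_{\phi^{-1}(H)})$ is sharply $F$-pure (respectively, strongly $F$-regular, respectively $\phi^{-1}(H)$ is $F$-pure). Viewing the members of $S$ as hyperplanes of $\bP^n_k$ via the map induced by $\phi$, the preimage $\phi^{-1}(H)$ is precisely $(H \cap V) \setminus (B \cup N)$, where $H$ is now interpreted both as a hyperplane in $\bP^n_k$ and as the corresponding element of $S$. Thus for a general element $H$ of $S$, viewed as a subscheme of $V$, the pair $(H, \Delta|_H)$ has the required property at every point outside $B \cup N$, which is exactly what the corollary asserts.

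There is essentially no obstacle beyond what was already overcome in proving Theorem~\ref{thm.MainTheorem}; the only substantive point is the verification that (A3P) holds for sharp $F$-purity and strong $F$-regularity in the $F$-finite setting (which is standard and noted in the excerpt) together with the formal identification of hyperplane sections under $\phi$ with members of the linear system $S$. The latter is just the definition of the morphism associated to a linear system and does not require additional work.
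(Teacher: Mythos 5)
Your argument is correct and is essentially the same deduction the paper invokes by citing \cite{CuminoGrecoManaresiAxiomaticBertini}: remove the base locus and the (closed, by openness of the $F$-singularity conditions) non-$\sP$ locus, apply Theorem~\ref{thm.MainTheorem} to the resulting morphism $V^\circ\to\bP^n_k$, and identify $\phi^{-1}(H)$ with the member of $S$ restricted to $V^\circ$. You have merely made explicit the standard reduction that the paper leaves to the reference.
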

\begin{proof}
This follows immediately from Theorem \ref{thm.MainTheorem} just as in \cite{CuminoGrecoManaresiAxiomaticBertini}.
\end{proof}

It also follows that the sharply $F$-pure locus is preserved by general hyperplane sections.

\begin{corollary}[(Corollary 2 in \cite{CuminoGrecoManaresiAxiomaticBertini})]
Let $V\subseteq \mathbb{P}^n_k$ be a normal subscheme where $k = \overline{k}$, let $\Delta \geq 0$ a $\bQ$-divisor on $V$.  Then if $(V, \Delta)$ is sharply $F$-pure (respectively, strongly $F$-regular, respectively $F$-pure and $\Delta = 0$), for the general hyperplane section $H$ of $V$, we have $(H, \Delta|_H)$ is sharply $F$-pure (respectively, strongly $F$-regular, respectively $F$-pure). Moreover, even if not, the sharply $F$-pure-locus (respectively, the strongly $F$-regular locus, respectively the $F$-pure locus) of $(V, \Delta)$ is preserved by the general hyperplane section.  For example, in the sharply $F$-pure case, this means that:
\[
\big(\textnormal{The sharply $F$-pure locus of $(X, \Delta)$}\big) \cap H \subseteq \big(\textnormal{The sharply $F$-pure locus of $(H, \Delta|_H)$}\big).
\]
\end{corollary}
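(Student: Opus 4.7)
The plan is to deduce the corollary directly from Theorem \ref{thm.MainTheorem}, exactly as Cumino-Greco-Manaresi deduce their Corollary 2 from their Theorem 1. The essential point is that a closed immersion induces isomorphisms on residue fields, so the separably-generated-residue-field hypothesis of Theorem \ref{thm.MainTheorem} is automatically met.

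For the first assertion, I would take $\phi$ to be the inclusion $\iota \colon V \hookrightarrow \bP^n_k$. Since $\iota$ is a closed immersion, for each point $x \in V$ the induced residue field extension is the identity $k(\iota(x)) \twoheadrightarrow k(x)$ at that point's image; in particular these extensions are trivially separably generated. Theorem \ref{thm.MainTheorem} then produces a nonempty open set $U \subseteq (\bP^n_k)^\vee$ such that for every $H \in U$ the pair $(\iota^{-1}(H), \Delta|_{\iota^{-1}(H)}) = (V \cap H, \Delta|_{V \cap H})$ has the desired $F$-singularity property, which is precisely the first claim.

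For the \emph{moreover} part, let $W \subseteq V$ denote the sharply $F$-pure locus (respectively the strongly $F$-regular locus, respectively the $F$-pure locus) of $(V,\Delta)$. By Remark \ref{rem.BasicPropertiesOfFPurePairs}(ii) (and the corresponding classical openness statement for $F$-purity of $F$-finite schemes in case $\Delta = 0$), $W$ is open in $V$. Thus $(W, \Delta|_W)$ is itself sharply $F$-pure (respectively strongly $F$-regular, respectively $F$-pure) on the nose, and I can apply Theorem \ref{thm.MainTheorem} to the composition $W \hookrightarrow V \hookrightarrow \bP^n_k$. This yields a nonempty open $U' \subseteq (\bP^n_k)^\vee$ such that for every $H \in U'$, the pair $(W \cap H, \Delta|_{W \cap H})$ has the sharply $F$-pure (respectively, strongly $F$-regular, respectively $F$-pure) property. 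Since $W \cap H$ is open in $V \cap H$, every point of $W \cap H$ then lies in the sharply $F$-pure locus of $(V \cap H, \Delta|_{V \cap H})$, which gives the stated containment.

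There is essentially no substantial obstacle here; the only thing to verify is the residue-field hypothesis of Theorem \ref{thm.MainTheorem}, and this is automatic for the inclusion $W \hookrightarrow \bP^n_k$. The corollary is thus a formal consequence of the main theorem together with the openness of the relevant loci, and the argument transcribes with no modification from the non-pair case treated in \cite{CuminoGrecoManaresiAxiomaticBertini}.
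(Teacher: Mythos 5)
Your proof is correct and matches the route the paper (implicitly) takes: apply Theorem \ref{thm.MainTheorem} with $\phi$ the (locally closed) immersion into $\bP^n_k$, noting that residue field extensions for an immersion are trivial hence separably generated, and for the ``moreover'' clause use the openness of the sharply $F$-pure/strongly $F$-regular/$F$-pure locus (Remark \ref{rem.BasicPropertiesOfFPurePairs}(ii), \ie (A3P)) to replace $V$ by $W$ and apply the theorem to $W\hookrightarrow\bP^n_k$. This is exactly how Cumino--Greco--Manaresi derive their Corollary 2 from their Theorem 1, transported to the pair setting.
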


\begin{remark}
By $F$-inversion of adjunction, \cite[Theorem 4.9]{HaraWatanabeFRegFPure} or \cite[Main Theorem, \cf Proposition 7.2]{SchwedeFAdjunction}, in the case that $K_V + \Delta$ is $\bQ$-Cartier with index not divisible by $p > 0$, one sees that one has ``='' instead of ``$\subseteq$'' in the above theorem.
\end{remark}

We formulate several somewhat less technical corollaries.

\begin{corollary}
\label{cor.EasyBertiniStatementPairs}
Suppose that $X$ is a normal projective variety over an algebraically closed field $k$ and that $\Delta \geq 0$ is a $\bQ$-divisor on $X$.  Suppose that $H$ corresponds to a general global section of a very ample line bundle on $X$.  Then if $(X, \Delta)$ has sharply $F$-pure (respectively strongly $F$-regular) singularities, then so does $(H, \Delta|_H)$.
\end{corollary}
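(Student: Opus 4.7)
The plan is to reduce the statement to a direct application of Theorem \ref{thm.MainTheorem}. First, since the hyperplane $H$ in the corollary comes from a general global section of a very ample line bundle $\sL$ on $X$, I would use $\sL$ to construct an embedding: let $s_0, s_1, \dots, s_n$ be a basis of $H^0(X, \sL)$ and let $\phi \: X \to \bP^n_k$ be the associated closed immersion. A general hyperplane section $\phi^{-1}(H_0)$ for $H_0 \in (\bP^n_k)^\vee$ then corresponds (up to the choice of basis) to a general element of the complete linear system $|\sL|$, matching the statement of the corollary.

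Next I would verify the hypotheses of Theorem \ref{thm.MainTheorem}. Since $\phi$ is a closed immersion, for every point $x \in X$ the induced residue field extension $k(\phi(x)) \to k(x)$ is an isomorphism, so in particular it is separably generated. Thus $\phi$ satisfies the hypothesis on residue field extensions required by Theorem \ref{thm.MainTheorem}. The pair $(X, \Delta)$ is, by hypothesis, either sharply $F$-pure or strongly $F$-regular (note that sharp $F$-purity of $(X,\Delta)$ with $\Delta \ne 0$ already forces $X$ to be normal by Remark \ref{rem.SharpFPurePairsAreNormal}, and we have assumed $X$ normal in any case).

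Applying Theorem \ref{thm.MainTheorem}, I conclude that there is a nonempty open subset $U \subseteq (\bP^n_k)^{\vee}$ such that for every $H_0 \in U$, the pair $\bigl(\phi^{-1}(H_0), \Delta|_{\phi^{-1}(H_0)}\bigr)$ is sharply $F$-pure (respectively strongly $F$-regular). Translating this back through the embedding, a general global section of $\sL$ cuts out a divisor $H$ on $X$ such that $(H, \Delta|_H)$ has the desired $F$-singularity property.

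Since the essentially all the work has been done in Theorem \ref{thm.MainTheorem}, I do not anticipate a substantive obstacle: the only thing to be careful about is the verification that the closed immersion associated to a very ample line bundle satisfies the separably-generated residue field hypothesis, but this is immediate because closed immersions induce residue field \emph{isomorphisms}. Thus the proof should amount to little more than setting up the embedding and citing Theorem \ref{thm.MainTheorem}.
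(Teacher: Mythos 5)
Your proof is correct and is exactly the argument the paper intends (the paper states this corollary without proof, as an immediate consequence of Theorem \ref{thm.MainTheorem}). You correctly identify the one point that needs checking—that a closed immersion automatically satisfies the separably-generated residue field hypothesis because it induces residue field isomorphisms—and the rest is just the translation between hyperplanes in $(\bP^n_k)^\vee$ and members of the complete linear system $|\sL|$.
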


\begin{corollary}
\label{cor.EasyBertiniStatement}
Suppose that $X$ is a projective variety over an algebraically closed field $k$.  Suppose that $H$ corresponds to a general global section of a very ample line bundle on $X$.  Then if $X$ has $F$-pure (respectively strongly $F$-regular) singularities, then so does $H$.
\end{corollary}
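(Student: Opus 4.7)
The plan is to deduce the statement directly as the $\Delta = 0$ specialization of Theorem \ref{thm.MainTheorem}, with the ``very ample line bundle'' reformulated as a closed immersion into projective space.

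First I would fix a very ample line bundle $\sL$ on $X$ and let $\phi : X \hookrightarrow \bP^n_k$ denote the induced closed immersion, where $n = \dim_k H^0(X, \sL) - 1$. Since $\phi$ is a closed immersion, for every $x \in X$ the map $k(\phi(x)) \to k(x)$ is an isomorphism, hence trivially a separably generated residue field extension. Thus the hypothesis on $\phi$ in Theorem \ref{thm.MainTheorem} is automatically satisfied.

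Next I would invoke the standard dictionary between very ample line bundles and embeddings: under the identification of $H^0(X, \sL)$ with the restriction of $H^0(\bP^n_k, \O_{\bP^n_k}(1))$, the vanishing locus of a section $s \in H^0(X, \sL)$ equals $\phi^{-1}(H_s)$, where $H_s \in (\bP^n_k)^{\vee}$ is the corresponding hyperplane. Consequently, a \emph{general} global section of $\sL$ cuts out precisely $\phi^{-1}(H)$ for a \emph{general} hyperplane $H \in (\bP^n_k)^{\vee}$, which is exactly the set-up of the conclusion of Theorem \ref{thm.MainTheorem}.

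Finally, I would apply Theorem \ref{thm.MainTheorem} with $\Delta = 0$: part (iii) gives the $F$-pure conclusion without requiring normality of $X$, and part (i) gives the strongly $F$-regular conclusion, where normality of $X$ is automatic since strong $F$-regularity implies normality. I do not expect any genuine obstacle: once the translation from ``general section of a very ample bundle'' to ``general hyperplane in $(\bP^n_k)^{\vee}$'' is made, the corollary is a repackaging of Theorem \ref{thm.MainTheorem}, and the only mild subtlety is remembering that strong $F$-regularity carries normality along for free.
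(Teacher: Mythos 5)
Your proof is correct and takes essentially the same route the paper implicitly intends: since the paper gives no explicit proof of Corollary~\ref{cor.EasyBertiniStatement}, the natural reading is exactly your $\Delta = 0$ specialization of Theorem~\ref{thm.MainTheorem}, using part~(iii) for $F$-purity (no normality hypothesis) and part~(i) for strong $F$-regularity (normality being automatic). The translation of ``general section of $\sL$'' to ``general $H \in (\bP^n_k)^{\vee}$'' via the closed immersion, with its trivially separable residue field extensions, is handled correctly.
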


In particular, we point out that our main result has something to say in the case of Frobenius split varieties.

\begin{corollary}
\label{cor.EasyBertiniForSplit}
Suppose that $X$ is a projective Frobenius split variety over an algebraically closed field $k$.  Suppose that $H$ corresponds to a general section of a very ample line bundle on $X$.  Then $H$ is locally Frobenius split (in other words, $F$-pure).
\end{corollary}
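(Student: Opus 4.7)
The plan is to reduce this statement to Corollary \ref{cor.EasyBertiniStatement} by observing that global Frobenius splitting forces local $F$-purity, so there is essentially nothing new to prove beyond unwinding definitions.

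First I would record the following elementary fact: if $X$ is (globally) Frobenius split, meaning that the natural map $\O_X \to F_* \O_X$ of $\O_X$-modules admits a splitting $\sigma \colon F_* \O_X \to \O_X$, then for every point $x \in X$, localizing $\sigma$ at $x$ produces a splitting of $\O_{X,x} \to F_* \O_{X,x}$. Hence the local ring $\O_{X,x}$ is $F$-pure (in the $F$-finite reduced setting, the existence of a splitting is equivalent to $F$-purity, as used in the definition of $F$-purity given in the background section). Thus $X$ is $F$-pure as a scheme in the sense adopted in this paper, namely that all its stalks are $F$-pure.

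Second I would invoke Corollary \ref{cor.EasyBertiniStatement} directly: since $X$ is projective over an algebraically closed field $k$ and is $F$-pure, a general global section $H$ of a very ample line bundle on $X$ has the property that $H$ is $F$-pure. Equivalently, each local ring $\O_{H,x}$ admits a splitting of its Frobenius, i.e.\ $H$ is locally Frobenius split. This is exactly the conclusion we want.

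The main (and only) conceptual step is the first one, and it is essentially a tautology once one recalls the two definitions. There is no real obstacle here; the corollary is stated separately from Corollary \ref{cor.EasyBertiniStatement} only to emphasize the globally Frobenius split case, which is the setting most commonly encountered in Frobenius splitting theory.
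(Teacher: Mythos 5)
Your argument is correct and is exactly the paper's intended (but unwritten) reasoning: the paper states this corollary immediately after Corollary \ref{cor.EasyBertiniStatement} without proof, precisely because a global Frobenius splitting localizes to a splitting of $\O_{X,x} \to F_*\O_{X,x}$ at every stalk, so $X$ is $F$-pure and Corollary \ref{cor.EasyBertiniStatement} applies verbatim.
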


\begin{remark}
It need not be that $H$ is globally Frobenius split in the above Corollary.  For example, consider $X = \Proj k[x,y,z,w]/\langle x^4 + y^4 + z^4 + w^4\rangle \subseteq \mathbb{P}^3$ in characteristic $p = 1 \mod 4$.  It is easy to see that this is Frobenius split but no general hyperplane section can be Frobenius split since a general hyperplane section has positive Kodaira dimension. To see this last claim, note that if $H$ is a general hyperplane section of $X$, it is a smooth degree $4$ hypersurface in $\mathbb{P}^2$ (in other words, a curve of genus $3$).
\end{remark}

We include another easy corollary which mimics common results in birational geometry, \cite[Lemma 5.17]{KollarMori}.

\begin{corollary}
\label{cor.PairsCorollary}
Suppose that $X$ is a variety over an algebraically closed field $k$, let $\Delta \geq 0$ be a $\bQ$-divisor on $X$.  Let $\phi:X\to \bP^n_k$ be a morphism with separably generated (not necessarily algebraic) residue field extensions.  Fix a general element $H$ of $(\bP^n_k)^{\vee}$.  Then:
\begin{itemize}
\item[(i)]  If $(X, \Delta)$ is sharply $F$-pure then $(X, \Delta + \phi^{-1}(H))$ is also sharply $F$-pure.
\item[(ii)]  If $(X, \Delta)$ is strongly $F$-regular, then $(X, \Delta+\phi^{-1}(H))$ is divisorially $F$-regular\footnote{The term ``divisorially $F$-regular'' unfortunately corresponds to purely log terminal singularities \cite{TakagiPLTAdjoint}.} in the sense of \cite{HaraWatanabeFRegFPure}.
\item[(iii)]  If $(X, \Delta)$ is strongly $F$-regular, then $(X, \Delta+\varepsilon \phi^{-1}(H))$ is strongly $F$-regular for all $1 > \varepsilon \geq 0$.
\end{itemize}
\end{corollary}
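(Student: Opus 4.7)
The plan is to deduce all three parts from Theorem~\ref{thm.MainTheorem} combined with $F$-inversion of adjunction and Lemma~\ref{lem.BasicPropertiesOfFpurityAndRegularity}.

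For part (i), I would fix a general $H \in (\bP^n_k)^{\vee}$ so that Theorem~\ref{thm.MainTheorem}(ii) produces a sharply $F$-pure pair $(\phi^{-1}(H), \Delta|_{\phi^{-1}(H)})$ on a normal divisor (normality follows from Remark~\ref{rem.SharpFPurePairsAreNormal}, or in the case $\Delta|_{\phi^{-1}(H)} = 0$ from ordinary Bertini for normality, \cite[Corollaire 9.9.5]{EGAIV3}). Away from $\phi^{-1}(H)$ the pair $(X, \Delta + \phi^{-1}(H))$ coincides with $(X, \Delta)$ and is sharply $F$-pure by hypothesis, so the only real content is to establish sharp $F$-purity along $\phi^{-1}(H)$ itself. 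This is precisely what $F$-inversion of adjunction \cite[Theorem 4.9]{HaraWatanabeFRegFPure}, \cite{SchwedeFAdjunction} delivers: sharp $F$-purity of the restriction to the normal divisor forces sharp $F$-purity of the ambient pair in a neighborhood of that divisor. To meet the $\bQ$-Cartier/index-prime-to-$p$ hypotheses of adjunction I would first enlarge $\Delta$ to $\Delta' \geq \Delta$ using Lemma~\ref{lem.SharpFPureCanBeAssumedNice}, run the argument with $(X, \Delta' + \phi^{-1}(H))$, and then invoke monotonicity of sharp $F$-purity in the boundary to conclude for the original $\Delta$.

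Part (iii) then follows at once from Lemma~\ref{lem.BasicPropertiesOfFpurityAndRegularity}(c) applied with $\Gamma = \phi^{-1}(H)$: strong $F$-regularity of $(X, \Delta)$ implies sharp $F$-purity of $(X, \Delta)$, so part (i) gives that $(X, \Delta + \phi^{-1}(H))$ is sharply $F$-pure, and the lemma then upgrades $(X, \Delta + \varepsilon \phi^{-1}(H))$ to strongly $F$-regular for every $1 > \varepsilon \geq 0$.

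For part (ii), divisorial $F$-regularity of $(X, \Delta + \phi^{-1}(H))$ in the sense of \cite{HaraWatanabeFRegFPure} reduces to sharp $F$-purity of the full pair (furnished by part (i)) together with strong $F$-regularity of the pair off the reduced boundary $\phi^{-1}(H)$ (immediate from strong $F$-regularity of $(X, \Delta)$, since on the complement the two pairs coincide). The main obstacle throughout is the clean application of $F$-inversion of adjunction in part (i): one must arrange the index-coprime-to-$p$ hypothesis via Lemma~\ref{lem.SharpFPureCanBeAssumedNice} and confirm normality of $\phi^{-1}(H)$ for general $H$; once those are in place the remaining deductions are formal.
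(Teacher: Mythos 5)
Parts (i) and (iii) of your argument match the paper's proof closely: reduce to $X$ affine, enlarge $\Delta$ to $\Delta'\geq\Delta$ via Lemma~\ref{lem.SharpFPureCanBeAssumedNice} so that $K_X+\Delta'$ is $\bQ$-Cartier with index prime to $p$, apply Theorem~\ref{thm.MainTheorem} to $(X,\Delta')$, use $F$-inversion of adjunction to get sharp $F$-purity of $(X,\Delta'+\phi^{-1}(H))$ near $\phi^{-1}(H)$, observe the pair is already sharply $F$-pure off $\phi^{-1}(H)$, and shrink the boundary back to $\Delta$; then (iii) follows from (i) and Lemma~\ref{lem.BasicPropertiesOfFpurityAndRegularity}(c). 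That is exactly what the authors do.

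Your part (ii), however, has a real gap. You invoke a ``reduction'' asserting that divisorial $F$-regularity of $(X,\Delta+\phi^{-1}(H))$ is equivalent to sharp $F$-purity of the full pair together with strong $F$-regularity on the complement of $\phi^{-1}(H)$. No such equivalence is stated in \cite{HaraWatanabeFRegFPure} or in this paper, you give no argument for it, and the naive characteristic-zero analog is false: $(\bA^2,\,L_1+L_2)$ with $L_1,L_2$ two lines through the origin is log canonical, and Kawamata log terminal away from $L_1\cup L_2$, but it is not purely log terminal. Being divisorially $F$-regular requires splittings $R\to F^e_*R(\lceil(p^e-1)(\Delta+E)\rceil)$ for every $c$ not in a minimal prime of $E$, which is genuinely stronger than sharp $F$-purity ($c=1$) combined with strong $F$-regularity after inverting an equation of $E$; the extra content is precisely what inversion of adjunction supplies. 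The paper's actual proof of (ii) replaces Lemma~\ref{lem.SharpFPureCanBeAssumedNice} by its strongly $F$-regular counterpart (\cite[Theorem 4.3]{SchwedeSmithLogFanoVsGloballyFRegular}) to obtain $\Delta'\geq\Delta$ with $K_X+\Delta'$ $\bQ$-Cartier of index prime to $p$ and $(X,\Delta')$ still strongly $F$-regular, then applies Theorem~\ref{thm.MainTheorem}(i) to conclude that $(\phi^{-1}(H),\Delta'|_{\phi^{-1}(H)})$ is strongly $F$-regular, and finally uses $F$-inversion of adjunction in the purely log terminal form to get divisorial $F$-regularity of $(X,\Delta'+\phi^{-1}(H))$ near $\phi^{-1}(H)$ (it is trivially divisorially $F$-regular off $\phi^{-1}(H)$). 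You should replace your claimed reduction by this argument, or else supply a proof of the reduction you assert, noting that the hypothesis that the hyperplane restriction itself is strongly $F$-regular is precisely the missing ingredient.
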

\begin{proof}
We prove (i) first.  We can assume $X$ is affine and then use Lemma \ref{lem.SharpFPureCanBeAssumedNice} to choose $\Delta' \geq \Delta$ such that $K_X + \Delta'$ is $\bQ$-Cartier with index not divisible by $p$ and also such that $(X, \Delta')$ is sharply $F$-pure.  We know that $(\phi^{-1}(H), \Delta'|_{\phi^{-1}(H)})$ is sharply $F$-pure and so by $F$-adjunction in the form of \cite[Theorem 4.9]{HaraWatanabeFRegFPure} or \cite[Main Theorem, \cf Proposition 7.2]{SchwedeFAdjunction}, we see that $(X, \Delta' + \phi^{-1}(H))$ is sharply $F$-pure near $\phi^{-1}(H)$.  But it is also $F$-pure away from $\phi^{-1}(H)$ by assumption.  The result follows.

The proof of statement (ii) is exactly the same as (i) but we must use \cite[Theorem 4.3]{SchwedeSmithLogFanoVsGloballyFRegular} instead of Lemma \ref{lem.SharpFPureCanBeAssumedNice}.  Finally, part (iii) follows from (i) and Lemma \ref{lem.BasicPropertiesOfFpurityAndRegularity}(c).
\end{proof}

Finally, we include a consequence to cyclic covers of Corollary \ref{cor.PairsCorollary} and also one of the main results of \cite{SchwedeTuckerTestIdealFiniteMaps}.  For a brief introduction to ramified cyclic covers, see \cite[Section 2.4]{KollarMori}.

\begin{corollary}
\label{cor.CyclicCover}
Suppose that $(X, \Delta)$ is a normal projective sharply $F$-pure pair (respectively, strongly $F$-regular pair) such that $K_X + \Delta$ is $\bQ$-Cartier.  Fix $\sL$ to be an ample line bundle and suppose that $\sL^n$ is very ample, $n$ is not divisible by $p$ and $H$ corresponds to a general section of $\sL^n$.  Let $f : Y \to X$ be the induced cyclic cover of $X$ ramified along $H$.  Then $(Y, f^* \Delta)$ is also sharply $F$-pure (respectively, strongly $F$-regular).
\end{corollary}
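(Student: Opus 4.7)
The plan is to reduce to Corollary~\ref{cor.PairsCorollary} by absorbing the ramification of the cyclic cover into the boundary on $X$, and then to invoke the finite-cover correspondence of \cite{SchwedeTuckerTestIdealFiniteMaps} to transport the $F$-singularity from $X$ up to $Y$.

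First I would apply Corollary~\ref{cor.PairsCorollary} to the closed immersion $\phi\colon X\hookrightarrow \bP^N_k$ induced by the very ample line bundle $\sL^n$. Since $\phi$ is a closed immersion into projective space over the algebraically closed field $k$, its residue field extensions are trivial and hence separably generated, so the corollary applies, and $H$ can be realised as $\phi^{-1}(H')$ for a general hyperplane $H'\in(\bP^N_k)^\vee$. In the strongly $F$-regular case, part~(iii) of that corollary with $\varepsilon = (n-1)/n < 1$ immediately gives that $\bigl(X,\Delta+\tfrac{n-1}{n}H\bigr)$ is strongly $F$-regular. In the sharply $F$-pure case, part~(i) gives that $(X,\Delta+H)$ is sharply $F$-pure; since sharp $F$-purity is preserved under shrinking the boundary (any splitting of the evaluation map for the larger pair restricts along the inclusion $\O_X(\lceil(p^e-1)(\Delta+\tfrac{n-1}{n}H)\rceil)\hookrightarrow \O_X(\lceil(p^e-1)(\Delta+H)\rceil)$ to a splitting for the smaller pair), the pair $\bigl(X,\Delta+\tfrac{n-1}{n}H\bigr)$ is also sharply $F$-pure.

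Next I would use that $f\colon Y\to X$ is a tamely ramified cyclic cover (tame because $p\nmid n$) branched along the integral Cartier divisor $H$. The standard ramification formula then gives
\[
K_Y + f^*\Delta \;=\; f^*\!\bigl(K_X + \Delta + \tfrac{n-1}{n}H\bigr),
\]
and in particular $K_Y + f^*\Delta$ is $\bQ$-Cartier. At this point the cyclic-cover correspondence proved in \cite{SchwedeTuckerTestIdealFiniteMaps}, which compares sharp $F$-purity and strong $F$-regularity of a pair downstairs with the corresponding property of its pullback under a tame finite map once the ramification has been absorbed into the boundary on the base, transports the sharp $F$-purity (respectively strong $F$-regularity) of $\bigl(X,\Delta+\tfrac{n-1}{n}H\bigr)$ to $(Y,f^*\Delta)$, completing the proof.

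The main obstacle, or rather the step into which all of the real input is packed, is invoking this last correspondence from \cite{SchwedeTuckerTestIdealFiniteMaps}: one must verify that its hypotheses are genuinely met, namely the tameness assumption $p\nmid n$, the reducedness of $H$ (automatic for a general section of a very ample linear system, and harmless to assume by shrinking the open set of admissible $H$ from Corollary~\ref{cor.PairsCorollary}), and the $\bQ$-Cartier identity displayed above. All three are in hand, so the passage from the statement on $X$ to the statement on $Y$ is immediate once the correspondence is cited correctly.
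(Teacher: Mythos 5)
Your argument is correct and structurally very close to the paper's: both proofs reduce to Corollary~\ref{cor.PairsCorollary} and then transport the property along $f$ via the finite-map correspondence of \cite{SchwedeTuckerTestIdealFiniteMaps}. The cosmetic difference is that you shrink the boundary on $X$ (from $H$ to $\tfrac{n-1}{n}H$) before pulling back, whereas the paper pulls back the full pair $(X,\Delta+H)$ to get $(Y,f^*\Delta + G)$ with $G=\tfrac{1}{n}f^*H$ and then shrinks the boundary on $Y$; these are interchangeable since discarding part of an effective boundary preserves sharp $F$-purity and strong $F$-regularity on either side.

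The one genuine divergence is in the strongly $F$-regular case. You invoke a strongly $F$-regular version of the Schwede--Tucker transformation rule directly. The paper instead only uses the \emph{sharply $F$-pure} version (their Theorem 6.26) to conclude $(Y, f^*\Delta + G)$ is sharply $F$-pure, then observes that $f$ is \'etale away from $G$ and $H$ so that $(Y,f^*\Delta+G)$ is already strongly $F$-regular off $\Supp G$, and closes with Lemma~\ref{lem.FPurePlusEpsilonImpliesFReg}. The payoff of the paper's route is that it cites only one transformation statement from \cite{SchwedeTuckerTestIdealFiniteMaps}; your route is shorter but leans on the reader knowing the strongly $F$-regular analogue holds with the same hypotheses (it does, but you should say exactly which theorem you mean rather than describing it generically as ``the correspondence''). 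Two small points worth making explicit in a final write-up: (1) $Y$ is normal (the paper records this up front, using that $H$ is general and $p\nmid n$), which is needed for $(Y,f^*\Delta)$ to be a legitimate pair and for the ramification formula to make sense; and (2) the surjectivity of the trace map, which is the standing hypothesis in \cite{SchwedeTuckerTestIdealFiniteMaps}, follows from $p\nmid n$ --- you get this for free from tameness but it is the hypothesis actually used, not tameness per se.
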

\begin{proof}
First note that $Y$ is normal since $H$ is general and the index is not divisible by $p$.
Certainly $(X, \Delta + H)$ is sharply $F$-pure by Corollary \ref{cor.PairsCorollary}.  Thus by \cite[Theorem 6.26]{SchwedeTuckerTestIdealFiniteMaps}, $(Y, f^* \Delta + f^* H - \Ram_f)$ is sharply $F$-pure (note that trace is surjective since $f : Y \to X$ is a finite map of order $n$ with $p \notdivide{ }  n$).  Here $\Ram_f$ is the ramification divisor of $f$ (note that $f$ is tamely ramified by construction).  Set $G$ to be the integral divisor on $Y$ such that $G = {1 \over n} f^* H$.  It follows that $f^* H = nG$ and also that $\Ram_f = (n-1)G$ since $f$ is tamely ramified.

Thus $(Y, f^* \Delta + f^* H - \Ram_f) = (Y, f^* \Delta + G)$ is sharply $F$-pure which certainly implies that $(Y, f^* \Delta)$ is sharply $F$-pure as desired.  For strong $F$-regularity, note that $f : Y \to X$ is finite \'etale outside of $G$ and $H$.  In particular, $(Y, f^* \Delta + G)$ is already strongly $F$-regular outside of $G$.  Thus $(Y, f^* \Delta)$ is strongly $F$-regular by Lemma \ref{lem.FPurePlusEpsilonImpliesFReg}.
\end{proof}

\section{Weak normality and failure of Bertini's second theorem for $F$-injective singularities}
\label{sec.WeakNormalityAndFailureOfBertini}

It is natural to ask whether (A1) is satisfied for $F$-rationality or $F$-injectivity.  For $F$-injectivity, it is known to fail by \cite[Section 4]{EnescuLocalCohomologyAndFStability}.  However, this does not {\it a priori} imply that Bertini's second theorem fails for $F$-injectivity.   However, in this section we prove that Bertini's second theorem does not hold for $F$-injective singularities even over an algebraically closed field.  This is surprising because Du~Bois singularities, the characteristic zero analog of $F$-injective singularities, are easily seen to satisfy Bertini's second theorem.

\begin{definition}[(\cite{AndreottiBombieri,YanagiharaWeaklyNormal})]
Suppose that $(R, \bm)$ is a reduced local ring of characteristic $p > 0$.  We say that $(R, \bm)$ is \emph{weakly normal} if for any $z \in K(R)$, the total ring of fractions of $R$, such that $z^p \in R$, then $z \in R$ as well.  We say that $(R, \bm)$ is \textnormal{(WN1)} if it is weakly normal and if the normalization morphism $R \to R^{\textnormal{N}}$ is unramified in codimension 1.
\end{definition}

Recall that an extension of local rings $(R,\bm) \subseteq (S,\bn)$ is unramified if
\begin{itemize}
\item[(i)]  $\bm \cdot S = \bn$ \emph{and}
\item[(ii)]  $R/\bm \subseteq S/\bn$ is separable.
\end{itemize}
We will see shortly that $F$-injective singularities can fail to be (WN1).   First we recall the picture for $F$-pure singularities:

\begin{theorem} [(Lemma 4.6 in \cite{SchwedeFInjectiveAreDuBois}, \cf Proposition 5.31 in \cite{HochsterRobertsFrobeniusLocalCohomology} and \cite{GotoWatanabeTheStructureOfOneDimensionalFPureRings})]
\label{thm.WeaklyNormalVSFinjective}
$F$-finite $F$-injective rings $R$ are weakly normal.  If $R$ is 1-dimensional, then the converse also holds.
\end{theorem}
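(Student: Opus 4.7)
My plan is to deduce both directions from a careful analysis of the Frobenius action on the first local cohomology. For the forward direction, assume $R$ is $F$-injective and $z \in K(R)$ satisfies $z^p \in R$, and suppose for contradiction that $z \notin R$. The conductor $I = \{r \in R \mid rz \in R\}$ is then a proper ideal; writing $z = a/b$ with $b$ a non-zerodivisor shows $b \in I$, so $I$ contains a non-zerodivisor. Choosing a minimal prime $\bp$ over $I$ and localizing at $\bp$, I reduce to the case that $R$ is local with maximal ideal $\bm$, $I$ is $\bm$-primary, and $R$ remains $F$-injective (since $F$-injectivity is a local property).

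Now $R[z] \subseteq K(R)$ is a finite integral extension of $R$, and the short exact sequence $0 \to R \to R[z] \to R[z]/R \to 0$ yields a Frobenius-equivariant long exact sequence of local cohomology. Since $\bm$ contains a non-zerodivisor which also remains a non-zerodivisor in $R[z]$ (as both sit inside $K(R)$), both $H^0_\bm(R)$ and $H^0_\bm(R[z])$ vanish; the connecting map therefore gives an injection $\delta : H^0_\bm(R[z]/R) \hookrightarrow H^1_\bm(R)$. The class $\bar z$ is killed by the $\bm$-primary ideal $I$ and so lies in $H^0_\bm(R[z]/R)$, and it is nonzero precisely because $z \notin R$. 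Set $\eta := \delta(\bar z) \neq 0$. The decisive observation is that Frobenius sends $\bar z$ to $\overline{z^p} = 0$ in $R[z]/R$; by naturality of $\delta$ this forces $F(\eta) = 0$ in $H^1_\bm(F_*R)$, contradicting $F$-injectivity.

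For the converse in dimension one, suppose $R$ is $F$-finite, reduced, one-dimensional, local, and weakly normal. Reducedness together with $\dim R = 1$ forces $\bm$ to contain a non-zerodivisor, so $H^0_\bm(R) = 0$ and the $H^0$-Frobenius is trivially injective. Picking a parameter $x \in \bm$, one has $H^1_\bm(R) = R_x/R$; because $R$ is one-dimensional reduced and local, $\bm R_x = R_x$, so $R_x$ coincides with the total ring of fractions $K(R)$. The analogous computation for $F_*R$ yields $H^1_\bm(F_*R) \cong F_*(K(R)/R)$, and the induced Frobenius map between these is simply $\bar u \mapsto \overline{u^p}$. Its injectivity is precisely the statement that $u^p \in R$ forces $u \in R$ for $u \in K(R)$, which is the weak normality hypothesis.

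The main technical point I expect to need care about is the bookkeeping around the twisted $R$-module structure on $F_*R$ when identifying $H^1_\bm(F_*R)$ with $F_*(K(R)/R)$ and tracking the induced Frobenius map at the level of representatives. The forward direction's reduction to the local case with $\bm$-primary conductor, together with the Frobenius-naturality of the connecting homomorphism, are otherwise routine once the setup is in place.
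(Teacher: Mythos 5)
The paper does not supply its own proof of this statement; it cites Lemma 4.6 of Schwede's $F$-injective/Du~Bois paper for the forward implication and Goto--Watanabe (cf.\ Hochster--Roberts) for the one-dimensional converse, so there is no internal proof to compare against. Your argument is correct. The forward direction reproduces the strategy of the cited Lemma 4.6: after localizing at a minimal prime of the conductor, the nonzero class $\bar z\in H^0_{\bm}(R[z]/R)$ injects under the connecting map into $H^1_{\bm}(R)$, and Frobenius kills its image because $z^p\in R$. Do flag that the reduction to the local case uses localization of $F$-injectivity, which in the $F$-finite setting one sees from the dual trace characterization stated in the paper's definition. For the converse your route differs from the one the citation suggests: instead of passing through Goto--Watanabe's classification of one-dimensional $F$-pure rings, you compute $H^1_{\bm}(R)\cong K(R)/R$ directly from a parameter (using $R_x=K(R)$) and read off that injectivity of $\bar u\mapsto\overline{u^p}$ is exactly the weak normality condition. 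This direct computation is both more elementary and essentially unavoidable here, since weak normality does not in general imply $F$-purity in dimension one---the stronger condition \textnormal{(WN1)} is what matches $F$-purity, as Theorem~\ref{Theorem: F-purity implies WN1} in this paper records---and Goto--Watanabe's theorem as invoked elsewhere in the paper additionally needs an algebraically closed residue field. The only step requiring care, which you acknowledge and handle, is the $F_*$-twist bookkeeping in identifying $H^1_{\bm}(F_*R)$ with $F_*(K(R)/R)$.
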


Now we prove that $F$-pure singularities are always (WN1).

\begin{theorem}
\label{Theorem: F-purity implies WN1}
If $R$ is $F$-finite and $F$-pure, then $R$ is \textnormal{(WN1)}.  Conversely, if $R$ is $1$-dimensional, $F$-finite and \textnormal{(WN1)}, then it is also $F$-pure.
\end{theorem}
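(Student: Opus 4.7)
The plan is to handle the two directions separately. For the forward direction, $F$-purity immediately implies $F$-injectivity by applying the splitting $\phi:F_*R\to R$ to the Frobenius action on local cohomology, so Theorem~\ref{thm.WeaklyNormalVSFinjective} gives weak normality. To verify that $R\to R^N$ is unramified in codimension one---both that $\bm R^N$ is radical at each maximal ideal of $R^N$ and that each residue field extension is separable---I would localize at a height-one prime and complete; both operations preserve $F$-purity and weak normality in the $F$-finite setting (using excellence of $F$-finite rings \cite{KunzOnNoetherianRingsOfCharP}), so it suffices to treat a one-dimensional complete $F$-finite $F$-pure reduced local ring $(R,\bm)$ with $R^N=\prod_{i=1}^s k_i[[t_i]]$ and $k:=R/\bm$.

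The two remaining conditions are then attacked by contradiction using weak normality and the Cohen structure. For separability: if some $k_i/k$ contains an element $\alpha$ with $\alpha\notin k_s$ but $\alpha^p\in k_s$ (where $k_s$ is the separable closure of $k$ in $k_i$), lift $\alpha$ through the Cohen coefficient field of $R_i$ and combine with a Hensel-lifted copy of $k_s$ inside $R$ to produce $\tilde\alpha\in R^N\setminus R$ with $\tilde\alpha^p\in R$, contradicting weak normality. For the ramification condition: if $\bm R_i=\bn_i^{e_i}$ with $e_i\geq 2$, pick a minimal-valuation generator of $\bm$ in $R_i$ and, using the strengthening of weak normality provided by $F$-purity (namely that every ideal of $R$ is Frobenius closed), produce an element of $R^N\setminus R$ whose $p$th power lies in $R$, again contradicting weak normality.

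For the converse in dimension one, reduce to the complete case and use (WN1) together with weak normality to identify $\hat R$ explicitly inside $\hat R^N=\prod k_i[[t_i]]$: the result should be that $\hat R$ equals the preimage, under the quotient $\hat R^N\to\prod k_i$, of the diagonally embedded $k\hookrightarrow\prod k_i$. This is a Stanley--Reisner-type gluing of regular rings along a separable residue-field diagonal. I would construct a Frobenius splitting of $\hat R$ by gluing componentwise splittings of the $k_i[[t_i]]$ (each of which exists because $k_i[[t_i]]$ is regular and $F$-finite) chosen compatibly with the Cohen lifts of the diagonal $k$.

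The main obstacle is precisely this compatibility: an arbitrary splitting $\Phi:F_*\hat R^N\to\hat R^N$ will typically fail to send $\hat R$ into itself. The separability of $k_i/k$---the part of (WN1) beyond bare weak normality---is the crucial ingredient, ensuring that $k$ lifts through Frobenius inside each $k_i[[t_i]]$ in a manner compatible with the diagonal inclusion, so componentwise splittings can be arranged to respect the subring $\hat R$. The case of imperfect $k$ introduces extra bookkeeping and is most cleanly handled via the Goto--Watanabe structure theory for one-dimensional $F$-pure rings \cite{GotoWatanabeTheStructureOfOneDimensionalFPureRings}.
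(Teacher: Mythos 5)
The central gap is in the forward direction, in your claim that the inseparability of a residue extension $k_i/k$ already contradicts weak normality. This is not true, and the distinction is exactly the content of the theorem. After reducing to a one-dimensional seminormal local ring $(R,\bm)$ with normalization $R^{\textnormal{N}}$ (semilocal with maximal ideals $\bn_1,\dots,\bn_s$ over $\bm$, residue fields $L_i \supseteq k = R/\bm$), seminormality forces $\bm R^{\textnormal{N}}$ to be the Jacobson radical of $R^{\textnormal{N}}$, and $R$ is the pullback of $\big\{R^{\textnormal{N}} \to \prod_i L_i \leftarrow k\big\}$. Unwinding the definition, $R$ is weakly normal precisely when every $\bar z \in \bigcap_i L_i$ (intersection taken inside a fixed algebraic closure) with $\bar z^p \in k$ already lies in $k$. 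This is a condition on the \emph{intersection} of the branches, not on each branch. Take two branches with $L_1/k$ purely inseparable and $L_2 = k$: then $\bigcap_i L_i = k$, the weak-normality condition is vacuous, and $R$ is weakly normal while manifestly failing \textnormal{(WN1)}. Such a ring is not $F$-pure, but you cannot detect this through the lens of weak normality alone. The paper closes this gap by actually using $F$-purity: a Frobenius splitting $\varphi : F^e_* R \to R$ extends (by uniqueness over the normalization) to $\varphi^{\textnormal{N}} : F^e_* R^{\textnormal{N}} \to R^{\textnormal{N}}$; compatibility with the conductor descends this to a nonzero $p^{-e}$-linear map $F^e_*(R^{\textnormal{N}}/\bm) \to R^{\textnormal{N}}/\bm$ restricting to one on $k$, and nonzero $p^{-e}$-linear maps cannot extend across inseparable field extensions (\cite[Example 5.1]{SchwedeTuckerTestIdealFiniteMaps}). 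That is the ingredient your argument is missing. (Your treatment of the ideal-level unramifiedness via Frobenius closure is workable but also heavier than needed: the paper gets $\bm R^{\textnormal{N}}$ radical for free from seminormality.)

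For the converse your plan is morally the same as the paper's---glue componentwise splittings of the DVR factors compatibly with the diagonal $k$, with separability supplying the unique compatible extension of a fixed splitting $\psi$ of $k$---but you route it through completion and Cohen structure theory, which adds real bookkeeping. The paper avoids completion entirely: it applies Fedder's lemma directly to the (regular, uncompleted) $R^{\textnormal{N}}$ to lift the unique extension $\psi' : F^e_* L \to L$ to $\varphi' : F^e_* R^{\textnormal{N}} \to R^{\textnormal{N}}$, glues $\varphi'$ with $\psi$ over the Ferrand pullback $C$ of $\{R^{\textnormal{N}} \to R^{\textnormal{N}}/\bm R^{\textnormal{N}} \leftarrow k\}$, and then identifies $R = C$ by observing that $R \to C$ is weakly subintegral and $R$ is weakly normal. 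This is cleaner and sidesteps the ``compatibility obstacle'' you flag, because the uniqueness of $\psi'$ over the separable extension makes the compatibility automatic rather than something to arrange by hand.
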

\begin{proof}
We prove the first statement.
Without loss of generality, we may assume that $(R,\bm)$ is local of dimension $1$ and so $R^{\textnormal{N}}$ is semilocal of dimension $1$.  Set $(S,\bn)$ to be the localization of $R^{\textnormal{N}}$ at an arbitrary maximal ideal over $\bm$.  Since $R$ is $F$-pure and thus seminormal, the conductor is radical in both $R$ and $R^{\textnormal{N}}$.  In particular, $\bc = \bm$.  Then $\bm S = \bc S$ is also radical and thus must be the maximal ideal $\bn$.  We need to show that $R/\bm \subseteq S/\bn$ is separable.

Since $R$ is $F$-finite and $F$-pure, we may choose a surjective map $\varphi : F^e_* R \to R$ extending to a surjective map $\varphi^{\textnormal{N}} : F^e_* R^{\textnormal{N}} \to R^{\textnormal{N}}$ by \cite[Exercise 1.2.E(4)]{BrionKumarFrobeniusSplitting}.
The conductor $\bc$ of $R \subseteq R^{\textnormal{N}}$ is $\varphi$-compatible and so we have a commutative diagram:
\[
\xymatrix{
F^e_* (S/\bn) \ar[r]   & S/\bn \\
F^e_* (R^{\textnormal{N}}/\bm) \ar[r] \ar@{->}[u] & R^{\textnormal{N}}/\bm \ar@{->}[u] \\
F^e_* (R/\bm) \ar@{^{(}->}[u] \ar[r] & R/\bm \ar@{^{(}->}[u]
}
\]
where the vertical compositions are injective.  The horizontal maps are non-zero (since they are surjective).  But non-zero $p^{-e}$-linear maps cannot extend over inseparable field extensions by \cite[Example 5.1]{SchwedeTuckerTestIdealFiniteMaps}.  This completes the proof of the forward direction.

For the converse, we fix some notation.  Note that we may assume that $(R, \bm)$ is local.  Set $k = R/\bm$ and $L = R^{\textnormal N}/\bm R^{\textnormal N}$.  Note that $L$ is a product of fields since $R \to R^{\textnormal N}$ is an unramified map of 1-dimensional rings, in particular $\bm R^{\textnormal N}$ is radical.  Fix a surjective map $\psi : F^e_* k \to k$.  Since $R$ is \textnormal{(WN1)}, every field making up $L$ is a finite separable extension of $k$.  In particular, we have a unique extension of $\psi$ to $L$, $\psi' : F^e_* L \to L$ by \cite{SchwedeTuckerTestIdealFiniteMaps}.  Since $R$ is 1-dimensional, $R^{\textnormal{N}}$ is regular, and so by \cite[Lemma 1.6]{FedderFPureRat}, there exists a map $\varphi' : F^e_* R^{\textnormal{N}} \to R^{\textnormal{N}}$ such that $\varphi'$ induces $\psi'$ by modding out by $\bm R^{\textnormal{N}}$.  Note that $\varphi'$ is surjective since $\psi'$ is.

Set $C$ to be the pullback of the diagram
\[
\big\{R^{\textnormal{N}} \to (R^{\textnormal{N}}/\bm R^{\textnormal{N}}) \leftarrow k\big\},
 \]
\cite{Ferrand2003}.  The maps $\varphi'$, $\psi'$ and $\psi$ glue together to induce a surjective map $\varphi : F^e_* C \to C$, in particular $C$ is $F$-pure.  Indeed, it is easy to see that $\varphi$ is surjective since $\varphi$ induces $\psi$ by modding out by $\bm$.  By the universal property of pullback, we have a natural map $R \to C$.  Furthermore, by construction, \cf \cite{Ferrand2003}, this map is a bijection on points which is an isomorphism outside of the maximal ideal.  Furthermore, the residue field of $C$ is $k$ as well proving that $R \to C$ is weakly subintegral \cite{YanagiharaWeaklyNormal}.  Since $R$ is weakly normal, we see that $R = C$ and so $R$ is $F$-pure as desired.
\end{proof}

We now prove that $F$-injective singularities do not satisfy Bertini-type theorems.
Our method of proof is the same as that of \cite{CuminoGrecoManaresiHyperplaneSectionsOfWNVarieties}.  There the authors prove that if a weakly normal scheme satisfies Bertini's theorem (perhaps iterated several times), then it must necessarily be \textnormal{(WN1)}.
\begin{proposition}
If $X$ is a projective surface over an algebraically closed field of characteristic $p > 0$ which is $F$-injective, except possibly at isolated points, but which fails to be \textnormal{(WN1)}.  Then a general hyperplane section is not $F$-injective (even though such a hyperplane misses the non-$F$-injective points).  In particular, the second theorem of Bertini, Corollary \ref{cor.SecondTheoremOfBertini}, fails for $F$-injective singularities.
\end{proposition}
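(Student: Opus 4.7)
The overall strategy is to combine the equivalence in dimension one between $F$-injectivity and weak normality (Theorem \ref{thm.WeaklyNormalVSFinjective}) with the main result of \cite{CuminoGrecoManaresiHyperplaneSectionsOfWNVarieties}: a weakly normal variety whose general hyperplane sections remain weakly normal must already be \textnormal{(WN1)}. In contrapositive form, if $X$ is weakly normal but not \textnormal{(WN1)}, then general hyperplane sections of $X$ cannot all be weakly normal, and one then observes that an $F$-injective curve is weakly normal.

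I would suppose for contradiction that a general hyperplane section $H$ of $X$ is $F$-injective. Being general, $H$ avoids the finitely many non-$F$-injective points of $X$; being one-dimensional, Theorem \ref{thm.WeaklyNormalVSFinjective} then forces $H$ to be weakly normal. On the side of $X$, the assumption that $X$ is $F$-injective outside a finite set of isolated points (combined with Theorem \ref{thm.WeaklyNormalVSFinjective}) implies that $X$ is weakly normal at every codimension-$\leq 1$ point, so the failure of \textnormal{(WN1)} must come from ramification of the normalization $X^{\textnormal{N}} \to X$ at the generic point of some prime divisor $E \subseteq X$. Applying the Cumino--Greco--Manaresi framework to the open dense weakly normal locus $U \subseteq X$ (which contains $E$ and all codimension-$\leq 1$ points of $X$), one deduces that general hyperplane sections of $U$ fail to be weakly normal; but a general hyperplane of $X$ misses the finite set $X \setminus U$ and therefore restricts to a general hyperplane section of $U$, contradicting the weak normality of $H$.

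The main obstacle is the precise invocation of \cite{CuminoGrecoManaresiHyperplaneSectionsOfWNVarieties}: one must verify that codimension-$1$ ramification of $X^{\textnormal{N}} \to X$ at the generic point of $E$ descends to ramification of $H^{\textnormal{N}} \to H$ at each closed point $p \in H \cap E$. The intended geometric picture is that $H$ cuts $E$ transversely at finitely many smooth points, and a local analysis propagates the ramification index from the generic point of $E$ on $X$ to the corresponding closed point of $H$. Since $k$ is algebraically closed, a $1$-dimensional $F$-finite local ring whose normalization is ramified at the maximal ideal cannot be weakly normal (the implication $z^p \in R \Rightarrow z \in R$ forces, over a perfect residue field, the value semigroup of $R$ inside $R^{\textnormal{N}}$ to saturate to all of $\mathbb{Z}_{\geq 0}$ in the unibranched case, with the analogous unramified configuration in the multi-branched case), which yields the desired contradiction and shows that the second theorem of Bertini, Corollary \ref{cor.SecondTheoremOfBertini}, cannot hold for $F$-injective singularities.
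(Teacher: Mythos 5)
Your high-level strategy --- combine the dimension-one equivalence of $F$-injectivity and weak normality with a Cumino--Greco--Manaresi (CGM) result linking \textnormal{(WN1)} to general hyperplane sections --- is the same as the paper's, and the contradiction you aim for is the right one. However, the route you take differs in two important ways, and both introduce genuine gaps.

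First, the paper cites \cite[Theorem 1]{CuminoGrecoManaresiHyperplaneSectionsOfWNVarieties} in the form ``$H$ \textnormal{(WN1)} implies $X$ \textnormal{(WN1)},'' and to feed this theorem the paper needs $H$ to be \textnormal{(WN1)}, not merely weakly normal. The paper supplies this via an extra chain that your proposal omits entirely: $H$ weakly normal and one-dimensional over an algebraically closed field forces $H$ to be $F$-pure (\cite[Theorem 1.1]{GotoWatanabeTheStructureOfOneDimensionalFPureRings}), and then the new Theorem~\ref{Theorem: F-purity implies WN1} upgrades $F$-pure to \textnormal{(WN1)} in dimension one. Your version of the CGM theorem (``$X$ weakly normal and general $H$ weakly normal implies $X$ \textnormal{(WN1)}'') is not what the paper cites, and if the correct citation is the paper's form, your argument has no way to pass from ``$H$ weakly normal'' to something that contradicts ``$X$ not \textnormal{(WN1)}'' without exactly the $F$-pure/\textnormal{(WN1)} step you skipped.

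Second, the middle of your proposal silently assumes that the failure of \textnormal{(WN1)} for $X$ ``must come from ramification of $X^{\mathrm N}\to X$ at the generic point of some prime divisor $E$.'' That is not given: $X$ being $F$-injective away from isolated closed points only gives weak normality at codimension-$\leq 1$ points, which does not by itself show $X$ is weakly normal, and ``not \textnormal{(WN1)}'' could a priori be caused by a failure of weak normality at an isolated point rather than by codimension-one ramification. The paper's argument sidesteps this entirely: it concludes from $H$ \textnormal{(WN1)} (via CGM) that $X$ is \textnormal{(WN1)}, contradicting the hypothesis with no analysis of \emph{why} \textnormal{(WN1)} fails. Your final paragraph, which tries to verify by a ``local analysis'' that ramification at the generic point of $E$ descends to ramification on $H^{\mathrm N}\to H$, is precisely the content of the cited CGM theorem and is not actually carried out; as written it is hand-waving. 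In short, the overall shape is right, but you need Theorem~\ref{Theorem: F-purity implies WN1} (and the Goto--Watanabe step) to close the loop, and the digression through the open locus $U$ and the explicit ramification-propagation picture should be replaced by a direct appeal to the CGM theorem as the paper does.
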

\begin{proof}
First we comment that the existence of isolated points which are not $F$-injective is relatively harmless.  General hyperplanes will miss these points.  Such examples are also much easier to construct.

Now we prove the Proposition.  Suppose a general hyperplane $H$ was $F$-injective.  Then $H$ is weakly normal by Theorem \ref{thm.WeaklyNormalVSFinjective}, since we are working over an algebraically closed field, $H$ is also $F$-pure by \cite[Theorem 1.1]{GotoWatanabeTheStructureOfOneDimensionalFPureRings}.  Hence by Theorem \ref{Theorem: F-purity implies WN1}, $H$ must be \textnormal{(WN1)}.  However, according to \cite[Theorem 1]{CuminoGrecoManaresiHyperplaneSectionsOfWNVarieties}, $H$ being \textnormal{(WN1)} will imply that $X$ is \textnormal{(WN1)}.  We have obtained our desired contradiction since $X$ is not \textnormal{(WN1)}.
\end{proof}

Such surfaces are easy to construct.  Indeed, take any weakly normal surface $X$ which is not \textnormal{(WN1)}.  Since $F$-injective is the same as weakly normal in dimension 1, $X$ is $F$-injective except possibly at isolated points.  Such surfaces were constructed in \cite[Corollary 4]{CuminoGrecoManaresiHyperplaneSectionsOfWNVarieties}.  Since $F$-injectivity satisfies (A2) and (A3), we have obtained another proof that $F$-injectivity must fail to satisfy (A1) by \cite[Theorem 1]{CuminoGrecoManaresiAxiomaticBertini}.
We finally point out that F.~Enescu's example in \cite[Section 4]{EnescuLocalCohomologyAndFStability} is also not \textnormal{(WN1)} (and fails to satisfy (A1)) but is not finite type over an algebraically closed field.  In conclusion, we obtain the following:

\begin{theorem}
\label{thm.ExistFailureOfBertiniForSurfaces}
There exists a projective surface which is $F$-injective except possibly at finitely many points whose general hyperplane section is not $F$-injective.
\end{theorem}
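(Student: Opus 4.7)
The plan is to essentially package the proposition just proved together with an existence result from Cumino--Greco--Manaresi. Concretely, I would begin by invoking \cite[Corollary 4]{CuminoGrecoManaresiHyperplaneSectionsOfWNVarieties} (or reproduce a similar construction) to produce a projective surface $X$ over the algebraically closed base field which is weakly normal but which fails to be \textnormal{(WN1)}. The existence of such surfaces is precisely the main content of \cite{CuminoGrecoManaresiHyperplaneSectionsOfWNVarieties} and provides the starting point.

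Next I would check that such an $X$ is $F$-injective away from a finite set of closed points. For this, note that weak normality localizes: if $R$ is weakly normal and $P \in \Spec R$, then $R_P$ is weakly normal, since the total ring of fractions of $R_P$ is a localization of that of $R$ and the defining condition ``$z^p \in R \Rightarrow z \in R$'' is preserved under localization. Hence all codimension $\le 1$ stalks of $X$ are weakly normal, and in particular the codimension one stalks are $1$-dimensional weakly normal local rings. By Theorem \ref{thm.WeaklyNormalVSFinjective}, every such $1$-dimensional weakly normal $F$-finite local ring is $F$-injective. Therefore the non-$F$-injective locus of $X$ is contained in the closed set of codimension $\ge 2$ points of the $2$-dimensional scheme $X$, which is a finite set of closed points.

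Now I would apply the preceding proposition: since $X$ is a projective surface over an algebraically closed field which is $F$-injective off a finite set and fails to be \textnormal{(WN1)}, the proposition shows that a general hyperplane section $H$ of $X$ cannot be $F$-injective. (The chain is: if $H$ were $F$-injective, Theorem \ref{thm.WeaklyNormalVSFinjective} makes it weakly normal, Goto--Watanabe \cite{GotoWatanabeTheStructureOfOneDimensionalFPureRings} combined with the algebraically closed hypothesis promotes weak normality to $F$-purity in dimension $1$, Theorem \ref{Theorem: F-purity implies WN1} upgrades $F$-purity to \textnormal{(WN1)}, and then \cite[Theorem 1]{CuminoGrecoManaresiHyperplaneSectionsOfWNVarieties} descends \textnormal{(WN1)} from $H$ back to $X$, contradicting our choice of $X$.) This furnishes the desired projective surface and completes the proof.

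The only step requiring any real input is finding a projective weakly normal surface that is not \textnormal{(WN1)}; everything else is bookkeeping and direct citation of results already established in the excerpt. The main obstacle is thus borrowed from \cite{CuminoGrecoManaresiHyperplaneSectionsOfWNVarieties}, and no new construction is needed on our part.
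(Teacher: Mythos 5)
Your proposal is correct and takes essentially the same route as the paper: invoke \cite[Corollary 4]{CuminoGrecoManaresiHyperplaneSectionsOfWNVarieties} to get a projective weakly normal surface $X$ over $\overline{k}$ that is not \textnormal{(WN1)}, observe via Theorem \ref{thm.WeaklyNormalVSFinjective} that $X$ is $F$-injective away from finitely many closed points (the paper leaves the localization-of-weak-normality step implicit; you spell it out, which is fine), and then apply the preceding proposition to conclude that a general hyperplane section is not $F$-injective.
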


\begin{remark}
\label{rem.BertiniFailuresByGluing}
It would be natural to try to find a projective $F$-injective surface which is $F$-injective \emph{everywhere} but not (WN1) and so also fails to satisfy Bertini's second theorem.  Let us briefly describe why we have failed to find such an object.
Via gluing constructions, \cf \cite{Ferrand2003}, it is not difficult to construct varieties that are \emph{proper} over an algebraically closed field, are $F$-injective but not \textnormal{(WN1)}.  Explicitly, glue two curves on a surface together, one via the identity, the other by a generically inseparable map.  However, we were unable to prove that such an example is \emph{projective} over an algebraically closed field.  In particular, we were unable to construct a surface which is projective, $F$-injective, and not \textnormal{(WN1)} although we believe such a surface should exist.
\end{remark}

\begin{remark}
It would be natural to try to find a variety of dimension $\geq 3$ which is $F$-injective but such that a general hyperplane section is not $F$-injective.  We do not know how to construct such examples however.  A natural place to look would be to try to glue together two curves on a projective threefold -- one by the identity and the other by an inseparable map.  This would yield a non-S2 scheme though.
\end{remark}

\section{Further questions and remarks}

The second theorem of Bertini is not true for $F$-injective singularities as the previous section demonstrated.  However, we can ask:

\begin{question}
Does the second theorem of Bertini hold for $F$-rational singularities?  What about for normal, or better yet (WN1), $F$-injective singularities?  Does (A1) hold for $F$-rational or normal $F$-injective singularities?
\end{question}

We can also ask:

\begin{question}
How does the classical notion of $F$-purity for pairs (sometimes called \emph{weak $F$-purity}) as defined in \cite{HaraWatanabeFRegFPure} behave under restriction to general hyperplane sections?
\end{question}

It would also be natural to try to generalize the results of this paper to test ideals.

\begin{question}
\label{quest.GenericRestrictionForTestIdeals}
Suppose that $(X, \Delta)$ is a pair where $X$ is normal and projective.  If $H$ is a general member of a very ample linear system (or more generally of a linear system whose map induces separably generated residue field extensions), is it true that $\tau(X, \Delta) \tensor \O_H = \tau(H, \Delta|_H)$?
\end{question}

\begin{remark}
\label{rem.MusYos}
Note that this question was studied before in \cite[Example 4.7]{MustataYoshidaTestIdealVsMultiplierIdeals}.  There they considered a family
$\Spec k[x,y,s] \to \Spec k[s]$ and the pair $(k[x,y,s], (x^p + y^ps)^t )$.  By choosing $1 > t \geq 1/p$, one might even expect that this example contradicts our main theorem, Theorem \ref{thm.MainTheorem}.  We explain why this is not the case.

Consider the point $\eta = \langle x^p + s y^p\rangle \in \Spec k[x,y,s]$ (or $\eta = \langle x^p + s y^p, y - \lambda \rangle$ for $0 \neq \lambda \in k$).  Note that $\eta$ lies over the generic point $\gamma$ of $\Spec k[s]$.  The residue field of $k[x,y,s]_{\eta}$ contains a $p$th root of $s$ and in particular, $k(\gamma) \subseteq k(\eta)$ not separable.  Thus there is hope that Question \ref{quest.GenericRestrictionForTestIdeals} also might have a positive answer.
\end{remark}

One can of course ask the same question for adjoint type ideals \cite{TakagiPLTAdjoint,TakagiHigherDimensionalAdjoint,SchwedeFAdjunction} and for non-$F$-pure ideals \cite{FujinoSchwedeTakagiSupplements}.

Finally, based upon Remark \ref{rem.BertiniFailuresByGluing} we ask:

\begin{question}
Does there exist a projective surface over an algebraically closed field which is $F$-injective and not \textnormal{(WN1)}?
\end{question}

\bibliographystyle{skalpha}
\bibliography{CommonBib}

\def\cprime{$'$} \def\cprime{$'$}
  \def\cfudot#1{\ifmmode\setbox7\hbox{$\accent"5E#1$}\else
  \setbox7\hbox{\accent"5E#1}\penalty 10000\relax\fi\raise 1\ht7
  \hbox{\raise.1ex\hbox to 1\wd7{\hss.\hss}}\penalty 10000 \hskip-1\wd7\penalty
  10000\box7}
\providecommand{\bysame}{\leavevmode\hbox to3em{\hrulefill}\thinspace}
\providecommand{\MR}{\relax\ifhmode\unskip\space\fi MR}
\providecommand{\MRhref}[2]{%
  \href{http://www.ams.org/mathscinet-getitem?mr=#1}{#2}
}
\providecommand{\href}[2]{#2}
\begin{thebibliography}{CGM89}

\bibitem[Abe01]{AberbachExtensionOfWeaklyAndStronglyFregularRingsByFlat}
{\sc I.~M. Aberbach}: \emph{Extension of weakly and strongly {F}-regular rings
  by flat maps}, J. Algebra \textbf{241} (2001), no.~2, 799--807.
  {\sf\scriptsize 1843326 (2002f:13008)}

\bibitem[AE03]{AberbachEnescuTestIdealsAndBaseChange}
{\sc I.~M. Aberbach and F.~Enescu}: \emph{Test ideals and base change problems
  in tight closure theory}, Trans. Amer. Math. Soc. \textbf{355} (2003), no.~2,
  619--636 (electronic). {\sf\scriptsize 1932717 (2003g:13003)}

\bibitem[AB69]{AndreottiBombieri}
{\sc A.~Andreotti and E.~Bombieri}: \emph{Sugli omeomorfismi delle variet\`a
  algebriche}, Ann. Scuola Norm. Sup Pisa (3) \textbf{23} (1969), 431--450.
  {\sf\scriptsize MR0266923 (42 \#1825)}

\bibitem[BS02]{BravoSmithBehaviorOfTestIdealsUnderSmooth}
{\sc A.~Bravo and K.~E. Smith}: \emph{Behavior of test ideals under smooth and
  \'etale homomorphisms}, J. Algebra \textbf{247} (2002), no.~1, 78--94.
  {\sf\scriptsize MR1873384 (2002m:13007)}

\bibitem[BK05]{BrionKumarFrobeniusSplitting}
{\sc M.~Brion and S.~Kumar}: \emph{Frobenius splitting methods in geometry and
  representation theory}, Progress in Mathematics, vol. 231, Birkh\"auser
  Boston Inc., Boston, MA, 2005. {\sf\scriptsize MR2107324 (2005k:14104)}

\bibitem[BH93]{BrunsHerzog}
{\sc W.~Bruns and J.~Herzog}: \emph{Cohen-{M}acaulay rings}, Cambridge Studies
  in Advanced Mathematics, vol.~39, Cambridge University Press, Cambridge,
  1993. {\sf\scriptsize MR1251956 (95h:13020)}

\bibitem[CGM86]{CuminoGrecoManaresiAxiomaticBertini}
{\sc C.~Cumino, S.~Greco, and M.~Manaresi}: \emph{An axiomatic approach to the
  second theorem of {B}ertini}, J. Algebra \textbf{98} (1986), no.~1, 171--182.
  {\sf\scriptsize 825140 (87k:14008)}

\bibitem[CGM89]{CuminoGrecoManaresiHyperplaneSectionsOfWNVarieties}
{\sc C.~Cumino, S.~Greco, and M.~Manaresi}: \emph{Hyperplane sections of weakly
  normal varieties in positive characteristic}, Proc. Amer. Math. Soc.
  \textbf{106} (1989), no.~1, 37--42. {\sf\scriptsize 953739 (89k:14014)}

\bibitem[Eis95]{EisenbudCommutativeAlgebra}
{\sc D.~Eisenbud}: \emph{Commutative algebra}, Graduate Texts in Mathematics,
  vol. 150, Springer-Verlag, New York, 1995, With a view toward algebraic
  geometry. {\sf\scriptsize 1322960 (97a:13001)}

\bibitem[Ene00]{EnescuBehaviorOfFrationalUnderBaseChange}
{\sc F.~Enescu}: \emph{On the behavior of {F}-rational rings under flat base
  change}, J. Algebra \textbf{233} (2000), no.~2, 543--566. {\sf\scriptsize
  1793916 (2001j:13007)}

\bibitem[Ene09]{EnescuLocalCohomologyAndFStability}
{\sc F.~Enescu}: \emph{Local cohomology and {F}-stability}, J. Algebra
  \textbf{322} (2009), no.~9, 3063--3077. {\sf\scriptsize 2567410
  (2011b:13050)}

\bibitem[Fed83]{FedderFPureRat}
{\sc R.~Fedder}: \emph{{$F$}-purity and rational singularity}, Trans. Amer.
  Math. Soc. \textbf{278} (1983), no.~2, 461--480. {\sf\scriptsize MR701505
  (84h:13031)}

\bibitem[Fer03]{Ferrand2003}
{\sc D.~Ferrand}: \emph{Conducteur, descente et pincement}, Bull. Soc. Math.
  France \textbf{131} (2003), no.~4, 553--585. {\sf\scriptsize MR2044495
  (2005a:13016)}

\bibitem[FST11]{FujinoSchwedeTakagiSupplements}
{\sc O.~Fujino, K.~Schwede, and S.~Takagi}: \emph{Supplements to non-lc ideal
  sheaves}, Higher Dimensional Algebraic Geometry, RIMS K\^oky\^uroku Bessatsu,
  B24, Res. Inst. Math. Sci. (RIMS), Kyoto, 2011, pp.~1--47.

\bibitem[Gab04]{Gabber.tStruc}
{\sc O.~Gabber}: \emph{Notes on some {$t$}-structures}, Geometric aspects of
  Dwork theory. Vol. I, II, Walter de Gruyter GmbH \& Co. KG, Berlin, 2004,
  pp.~711--734.

\bibitem[GW10]{GortzWedhornAlgebraicGeometry1}
{\sc U.~G\"ortz and T.~Wedhorn}: \emph{{Algebraic geometry I. Schemes. With
  examples and exercises.}}, {Advanced Lectures in Mathematics. Wiesbaden:
  Vieweg+Teubner. vii, 615~p. EUR~59.95 }, 2010 (English).

\bibitem[GW77]{GotoWatanabeTheStructureOfOneDimensionalFPureRings}
{\sc S.~Goto and K.~Watanabe}: \emph{The structure of one-dimensional
  {$F$}-pure rings}, J. Algebra \textbf{49} (1977), no.~2, 415--421.
  {\sf\scriptsize MR0453729 (56 \#11989)}

\bibitem[Gro66]{EGAIV3}
{\sc A.~Grothendieck}: \emph{\'{E}l\'ements de g\'eom\'etrie alg\'ebrique.
  {IV}. \'{E}tude locale des sch\'emas et des morphismes de sch\'emas. {III}},
  Inst. Hautes \'Etudes Sci. Publ. Math. (1966), no.~28, 255. {\sf\scriptsize
  0217086 (36 \#178)}

\bibitem[HW02]{HaraWatanabeFRegFPure}
{\sc N.~Hara and K.-I. Watanabe}: \emph{F-regular and {F}-pure rings vs. log
  terminal and log canonical singularities}, J. Algebraic Geom. \textbf{11}
  (2002), no.~2, 363--392. {\sf\scriptsize MR1874118 (2002k:13009)}

\bibitem[Har66]{HartshorneResidues}
{\sc R.~Hartshorne}: \emph{Residues and duality}, Lecture notes of a seminar on
  the work of A. Grothendieck, given at Harvard 1963/64. With an appendix by P.
  Deligne. Lecture Notes in Mathematics, No. 20, Springer-Verlag, Berlin, 1966.
  {\sf\scriptsize MR0222093 (36 \#5145)}

\bibitem[Har77]{Hartshorne}
{\sc R.~Hartshorne}: \emph{Algebraic geometry}, Springer-Verlag, New York,
  1977, Graduate Texts in Mathematics, No. 52. {\sf\scriptsize MR0463157 (57
  \#3116)}

\bibitem[Has01]{HashimotoCMFinjectiveHoms}
{\sc M.~Hashimoto}: \emph{Cohen-{M}acaulay {F}-injective homomorphisms},
  Geometric and combinatorial aspects of commutative algebra ({M}essina, 1999),
  Lecture Notes in Pure and Appl. Math., vol. 217, Dekker, New York, 2001,
  pp.~231--244. {\sf\scriptsize 1824233 (2002d:13007)}

\bibitem[Hoc07]{HochsterFoundations}
{\sc M.~Hochster}: \emph{Foundations of tight closure theory}, lecture notes
  from a course taught on the University of Michigan Fall 2007 (2007).

\bibitem[HH94a]{HochsterHunekeSmoothBaseChange}
{\sc M.~Hochster and C.~Huneke}: \emph{{$F$}-regularity, test elements, and
  smooth base change}, Trans. Amer. Math. Soc. \textbf{346} (1994), no.~1,
  1--62. {\sf\scriptsize 1273534 (95d:13007)}

\bibitem[HH94b]{HochsterHunekeIndecomposable}
{\sc M.~Hochster and C.~Huneke}: \emph{Indecomposable canonical modules and
  connectedness}, Commutative algebra: syzygies, multiplicities, and birational
  algebra (South Hadley, MA, 1992), Contemp. Math., vol. 159, Amer. Math. Soc.,
  Providence, RI, 1994, pp.~197--208. {\sf\scriptsize MR1266184 (95e:13014)}

\bibitem[HR76]{HochsterRobertsFrobeniusLocalCohomology}
{\sc M.~Hochster and J.~L. Roberts}: \emph{The purity of the {F}robenius and
  local cohomology}, Advances in Math. \textbf{21} (1976), no.~2, 117--172.
  {\sf\scriptsize MR0417172 (54 \#5230)}

\bibitem[Jou83]{JouanolouTheoremDeBertini}
{\sc J.-P. Jouanolou}: \emph{Th\'eor\`emes de {B}ertini et applications},
  Progress in Mathematics, vol.~42, Birkh\"auser Boston Inc., Boston, MA, 1983.
  {\sf\scriptsize 725671 (86b:13007)}

\bibitem[Kle98]{KleimanBertiniHistory}
{\sc S.~L. Kleiman}: \emph{Bertini and his two fundamental theorems}, Rend.
  Circ. Mat. Palermo (2) Suppl. (1998), no.~55, 9--37, Studies in the history
  of modern mathematics, III. {\sf\scriptsize 1661859 (99m:14001)}

\bibitem[Kol11]{KollarHullsHusks}
{\sc J.~Koll\'ar}: \emph{Hulls and husks}, arXiv:0805.0576.

\bibitem[KM98]{KollarMori}
{\sc J.~Koll{\'a}r and S.~Mori}: \emph{Birational geometry of algebraic
  varieties}, Cambridge Tracts in Mathematics, vol. 134, Cambridge University
  Press, Cambridge, 1998, With the collaboration of C. H. Clemens and A. Corti,
  Translated from the 1998 Japanese original. {\sf\scriptsize MR1658959
  (2000b:14018)}

\bibitem[Kun69]{KunzCharacterizationsOfRegularLocalRings}
{\sc E.~Kunz}: \emph{Characterizations of regular local rings for
  characteristic {$p$}}, Amer. J. Math. \textbf{91} (1969), 772--784.
  {\sf\scriptsize MR0252389 (40 \#5609)}

\bibitem[Kun76]{KunzOnNoetherianRingsOfCharP}
{\sc E.~Kunz}: \emph{On {N}oetherian rings of characteristic {$p$}}, Amer. J.
  Math. \textbf{98} (1976), no.~4, 999--1013. {\sf\scriptsize MR0432625 (55
  \#5612)}

\bibitem[Mat89]{MatsumuraCommutativeRingTheory}
{\sc H.~Matsumura}: \emph{Commutative ring theory}, second ed., Cambridge
  Studies in Advanced Mathematics, vol.~8, Cambridge University Press,
  Cambridge, 1989, Translated from the Japanese by M. Reid. {\sf\scriptsize
  MR1011461 (90i:13001)}

\bibitem[MY09]{MustataYoshidaTestIdealVsMultiplierIdeals}
{\sc M.~Musta{\c{t}}{\u{a}} and K.-I. Yoshida}: \emph{Test ideals vs.
  multiplier ideals}, Nagoya Math. J. \textbf{193} (2009), 111--128.
  {\sf\scriptsize MR2502910}

\bibitem[Sch09a]{SchwedeFAdjunction}
{\sc K.~Schwede}: \emph{{$F$}-adjunction}, Algebra Number Theory \textbf{3}
  (2009), no.~8, 907--950.

\bibitem[Sch09b]{SchwedeFInjectiveAreDuBois}
{\sc K.~Schwede}: \emph{{$F$}-injective singularities are {D}u {B}ois}, Amer.
  J. Math. \textbf{131} (2009), no.~2, 445--473. {\sf\scriptsize MR2503989}

\bibitem[Sch10]{SchwedeBetterFPureFRegular}
{\sc K.~Schwede}: \emph{A refinement of sharply {$F$}-pure and strongly
  {$F$}-regular pairs}, J. Commut. Algebra \textbf{2} (2010), no.~1, 91--109.
  {\sf\scriptsize 2607103 (2011c:13007)}

\bibitem[SS10]{SchwedeSmithLogFanoVsGloballyFRegular}
{\sc K.~Schwede and K.~E. Smith}: \emph{Globally {$F$}-regular and log {F}ano
  varieties}, Adv. Math. \textbf{224} (2010), no.~3, 863--894. {\sf\scriptsize
  2628797 (2011e:14076)}

\bibitem[ST10]{SchwedeTuckerTestIdealFiniteMaps}
{\sc K.~Schwede and K.~Tucker}: \emph{On the behavior of test ideals under
  finite morphisms}, arXiv:1003.4333, to appear in J. Algebraic Geom.

\bibitem[ST11]{SchwedeTuckerTestIdealSurvey}
{\sc K.~Schwede and K.~Tucker}: \emph{A survey of test ideals},
  arXiv:1104.2000, to appear in the proceedings volume Progress in Commutative
  Algebra published by de Gruyter.

\bibitem[Sei50]{SeidenbergHyperplaneSectionOfNormal}
{\sc A.~Seidenberg}: \emph{The hyperplane sections of normal varieties}, Trans.
  Amer. Math. Soc. \textbf{69} (1950), 357--386. {\sf\scriptsize 0037548
  (12,279a)}

\bibitem[Sha10]{SharpAnExcellentFPureRingHasABigTCElement}
{\sc R.~Y. Sharp}: \emph{An excellent {$F$}-pure ring of prime characteristic
  has a big tight closure test element}, Trans. Amer. Math. Soc. \textbf{362}
  (2010), no.~10, 5455--5481. {\sf\scriptsize 2657687 (2011h:13005)}

\bibitem[SZ09]{ShimomotoZhang.OnTheLocalizationProblemForFPure}
{\sc K.~Shimomoto and W.~Zhang}: \emph{On the localization theorem for
  {$F$}-pure rings}, J. Pure Appl. Algebra \textbf{213} (2009), no.~6,
  1133--1139. {\sf\scriptsize 2498803 (2010d:13006)}

\bibitem[Spr98]{SpreaficoAxiomaticTheoryForBertini}
{\sc M.~L. Spreafico}: \emph{Axiomatic theory for transversality and {B}ertini
  type theorems}, Arch. Math. (Basel) \textbf{70} (1998), no.~5, 407--424.
  {\sf\scriptsize 1612610 (99f:14008)}

\bibitem[Tak08]{TakagiPLTAdjoint}
{\sc S.~Takagi}: \emph{A characteristic {$p$} analogue of plt singularities and
  adjoint ideals}, Math. Z. \textbf{259} (2008), no.~2, 321--341.
  {\sf\scriptsize MR2390084 (2009b:13004)}

\bibitem[Tak10]{TakagiHigherDimensionalAdjoint}
{\sc S.~Takagi}: \emph{Adjoint ideals along closed subvarieties of higher
  codimension}, J. Reine Angew. Math. \textbf{641} (2010), 145--162.
  {\sf\scriptsize 2643928 (2011f:14032)}

\bibitem[V{\'e}l95]{VelezOpennessOfTheFRationalLocus}
{\sc J.~D. V{\'e}lez}: \emph{Openness of the {F}-rational locus and smooth base
  change}, J. Algebra \textbf{172} (1995), no.~2, 425--453. {\sf\scriptsize
  MR1322412 (96g:13003)}

\bibitem[Yan85]{YanagiharaWeaklyNormal}
{\sc H.~Yanagihara}: \emph{On an intrinsic definition of weakly normal rings},
  Kobe J. Math. \textbf{2} (1985), no.~1, 89--98. {\sf\scriptsize MR811809
  (87d:13007)}

\bibitem[Zar44]{ZariskiTheoremOfBertiniOnTheVariableSingularPoints}
{\sc O.~Zariski}: \emph{The theorem of {B}ertini on the variable singular
  points of a linear system of varieties}, Trans. Amer. Math. Soc. \textbf{56}
  (1944), 130--140. {\sf\scriptsize 0011572 (6,186a)}

\end{thebibliography}

\end{document}